\newcommand{\supp}{\text {\rm supp}}
\newcommand{\Hom}{{\rm Hom}}
\def\i{^{-1}}
\def\ge{\geqslant}
\def\le{\leqslant}
\def\<{\langle}
\def\>{\rangle}
\def\d{\text{d}}
\def\Dr{\text{D}}
\def\gr{\text{grad}}
\def\Exp{\text{exp}}
\def\Lim{\text{Lim}}
\def\sup{\text{sup}}
\def\tr{\text{Tr}}
\def\K{\text{K}}
\def\Aut{\text{Aut}}
\def\a{\alpha}
\def\d{\delta}
\def\D{\Delta}
\def\e{\epsilon}
\def\o{\omega}
\def\s{\sigma}
\def\t{\tau}
\def\th{\theta}
\def\l{\lambda}
\def\Om{\Omega}
\def\ZZ{\mathbb Z}
\def\NN{\mathbb N}
\def\QQ{\mathbb Q}
\def\RR{\mathbb R}
\def\CC{\mathbb C}
\def\ca{\mathcal A}
\def\cd{\mathcal D}
\def\co{\mathcal O}
\def\cp{\mathcal P}
\def\tM{\tilde M}
\def\tJ{\tilde J}
\def\tH{\tilde H}
\def\tu{\tilde u}
\def\tx{\tilde x}
\def\ty{\tilde y}
\def\tW{\tilde W}
\def\tw{\tilde w}
\def\fA{\mathfrak A}
\def\fC{\mathfrak C}
\def\fH{\mathfrak H}
\theoremstyle{plain}
\newtheorem{thm}{Theorem}[section]
\newtheorem*{thm*}{Theorem}
 \newtheorem{prop}[thm]{Proposition}
 \newtheorem{lem}[thm]{Lemma}
 \newtheorem{cor}[thm]{Corollary}
\theoremstyle{definition}
\theoremstyle{remark}
\newtheorem*{rmk}{Remark}
\newtheorem*{claim*}{Claim}
\begin{document}
\author{Xuhua He}
\address{Department of Mathematics, The Hong Kong University of Science and Technology, Clear Water Bay, Kowloon, Hong Kong}
\email{maxhhe@ust.hk}
%\thanks{X.H. was partially supported by HKRGC grants 601409 and 602011.}
\author{Sian Nie}
\address{Institute of Mathematics, Chinese Academy of Sciences, Beijing, 100190, China}
\email{niesian@gmail.com}
\title[]{Minimal length elements of extended affine Coxeter groups, II}
\keywords{Minimal length element, affine Coxeter group, affine Hecke algebra}
\subjclass[2000]{20F55, 20E45}

\begin{abstract}
Let $W$ be an extended affine Weyl group. We prove that minimal length elements $w_{\co}$ of any conjugacy class $\co$ of $W$ satisfy some special properties, generalizing results of Geck and Pfeiffer \cite{GP} on finite Weyl groups. We then introduce the ``class polynomials'' for affine Hecke algebra $H$ and prove that $T_{w_\co}$, where $\co$ runs over all the conjugacy classes of $W$, forms a basis of the cocenter $H/[H, H]$. We also classify the conjugacy classes satisfying a generalization of Lusztig's conjecture \cite{L4}.
\end{abstract}

\maketitle

\section*{Introduction}

\subsection{} Let W be a finite Weyl group and $\co$ be a conjugacy class of $W$. In \cite{GP} and \cite{GP00}, Geck and Pfeiffer proved the following remarkable properties:

(1) For any $w \in \co$, there exists a sequence of conjugations by simple reflections which reduces $w$ to a minimal length element in $\co$ and the lengths of the elements in the sequence weakly decrease;

(2) If $w, w'$ are both of minimal length in $\co$, then they are strongly conjugate.

%(3) If moreover $\co$ is elliptic, then any two minimal length element in $\co$ are conjugate by cyclic shift.

Such properties play an important role in the study of finite Hecke algebras. They lead to the definition and determination of ``character tables'' for finite Hecke algebras, which are in analogy to character table of finite groups. They also play a role in the study of Deligne-Lusztig varieties (see, e.g. \cite{OR}, \cite{BR} and \cite{HL}) and in the study of links between conjugacy classes in finite Weyl groups and unipotent conjugacy classes in reductive groups (see \cite{L?}).

\subsection{} The main purpose of this paper is to establish similar properties for affine Weyl groups. Such properties were first studied by the first author for some affine Weyl groups of classical type in \cite{He2} via a case-by-case analysis.

The method we use here is quite different from loc.cit. We represent elements in the conjugacy class containing a given element $\tw$ by alcoves. The key step is Proposition \ref{red1}, which reduces the study of arbitrary alcove to an alcove that intersects the affine subspace associated to the Newton point of $\tw$. We then use the ``partial conjugation'' method introduced in \cite{He1} to reduce to a similar problem for a finite Coxeter group, which is proved in  \cite{GP} and \cite{GP00} via a case-by-case analysis and later in \cite{HN} by a case-free argument. This leads to a case-free proof which works for all cases, including the exceptional affine Weyl groups, which seems very difficult via the approach in \cite{He2}. 

We'll discuss in the next section some applications of these properties to affine Hecke algebras. They also play a key role in the study of affine Deligne-Lusztig varieties and a proof of the main conjecture in \cite{GHKR}. See \cite{H99} and \cite{GHN}.

\subsection{} The affine Hecke algebra $H$ is a free $\ZZ[q, q \i]$-module with basis $T_w$ for $w$ runs over elements in $W$. By density theorem and trace Paley-Wiener theorem \cite{K}, if $q$ is a power of a prime, then the trace function gives a natural bijection from the dual space of the cocenter $H/[H, H]$ to the space of Grothedieck group of representations of $H$.

We construct a standard basis for the cocenter $H/[H, H]$. It consists of elements $T_{w_\co}$, where $\co$ runs over all the conjugacy classes of $W$ and $w_\co$ is a minimal length representative in $\co$. The special properties on $W$ implies that

(a) The image of $T_{w_\co}$ in the cocenter $H$ does not depend on the choice of minimal length representative $w_\co$.

(b) For any $w \in W$, the image of $T_w$ in the cocenter of $H$ is a linear combination of $T_{w_\co}$ and the coefficients are the ``class polynomials'' $f_{w, \co}$.

The fact that the images of various $T_{w_\co}$ in the cocenter of $H$ are linearly independent follows from the density theorem for affine Weyl group, which will be proved in section 6.

\subsection{} Among all the conjugacy classes of $W$, the straight conjugacy classes play an intriguing role. By definition, a conjugacy class is called straight if it contains a straight element $w$, i.e., $\ell(w^n)=n \ell(w)$ for all $n \ge 0$. The minimal length elements in a straight conjugacy class are just the straight elements it contains. The notion of straight element/conjugacy class was first introduced by Krammer in \cite{Kr} in the study of conjugacy problem.

It was observed by the first author in \cite{He2} that the straight conjugacy classes have some geometric meaning. Namely, there is a natural bijection between the set of $\s$-conjugacy classes of a $p$-adic group and the set of straight conjugacy classes of the corresponding affine Weyl group $W$. There is no known counterpart for finite Weyl groups.

In this paper, we also study in detail some special properties of minimal length elements in a straight conjugacy class. We give some geometric and algebraic criteria for straight conjugacy classes and show that any two minimal length elements in a straight conjugacy class are conjugate by cyclic shift. We also show that two straight elements are conjugate to each other if and only if their Newton points are the same.

\subsection{} An elliptic conjugacy class $\co$ in a finite Coxeter group satisfies the following remarkable properties:

(1) Any two minimal length elements in $\co$ are conjugate by cyclic shift.

(2) For any minimal length element $\tw$ of $\co$, the centralizer of $\tw$ is obtained via cyclic shift in the sense of Lusztig \cite{L4}.

Property (1) was due to Geck and Pfeiffer \cite{GP00} and the first author \cite{He1}. Property (2) was first conjectured and verified by Lusztig \cite[1.2]{L4} for untwisted classical groups and proved in general in \cite{HN}.

Such properties play an important role in the study of Deligne-Lusztig varieties and representations of finite groups of Lie type. Similar properties for affine Weyl group also play an essential role in the study of affine Deligne-Lusztig varieties.

In this paper, we also classify the conjugacy classes of finite and affine Weyl groups that satisfy properties (1) and (2) above. In particular, we show that any conjugacy class in affine Weyl group whose finite part is elliptic in the corresponding finite Weyl group satisfies both properties.

\section{Preliminary}

\subsection{}\label{Cox} Let $S$ be a finite set and $(m_{s t})_{s, t \in S}$ be a matrix with entries in $\mathbb N \cup \{\infty\}$ such that $m_{s s}=1$ and $m_{s t}=m_{t s} \ge 2$ for all $s \neq t$. Let $W$ be a group generated by $S$ with relations $(s t)^{m_{s t}}=1$ for $s, t \in S$ with $m_{s, t}< \infty$. We say that $(W, S)$ is a {\it Coxeter group}. Sometimes we just call $W$ itself a Coxeter group.

Let $\Aut(W, S)$ be the group of automorphisms of the group $W$ that preserve $S$. Let $\Om$ be a group with a group homomorphism to $\Aut(W, S)$. Set $\tW=W\rtimes\Om$. Then an element in $\tW$ is of the form $w \d$ for some $w \in W$ and $\d \in \Om$. We have that $(w \d) (w' \d')=w \d(w') \d \d' \in \tW$ with $\d, \d' \in \Om$.

For $w \in W$ and $\d \in \Om$, we set $\ell(w \d)=\ell(w)$, where $\ell(w)$ is the length of $w$ in the Coxeter group $(W,S)$. Thus $\Om$ consists of length $0$ elements in $\tW$. We sometimes call the elements in $\Om$ {\it basic elements} in $\tW$.

We are mainly interested in the $W$-conjugacy classes in $\tW$. By \cite[Remark 2.1]{GKP}, for any $\d \in \Om$, the map $W \to \tW$, $w \mapsto w \d$ gives a bijection between the $\d$-conjugacy classes in $W$ and the $W$-conjugacy classes in $\tW$ that are contained in $W \d$.

%\subsection{} Let $\d\in \Om$. Two elements $w, w'$ of $W$ are said to be $\d$-conjugate if $w'=x w \d(x) \i$ for some $x \in W$. The relation of $\d$-conjugacy is an equivalence relation and the equivalence classes are said to be $\d$-conjugacy classes.

%The map $W \to \tW$, $w \mapsto w \d$ gives a bijection between the $\d$-conjugacy classes in $W$ and the $W$-conjugacy classes in $\tW$ that are contained in $W \d$.

\subsection{}\label{to} For $w, w' \in \tW$ and $s \in S$, we write $w \xrightarrow{s} w'$ if $w'=s w s$ and $\ell(w') \le \ell(w)$.  We write $w \to w'$ if there is a sequence $w=w_0, w_1, \cdots, w_n=w'$ of elements in $\tW$ such that for any $k$, $w_{k-1} \xrightarrow{s} w_k$ for some $s \in S$.

We write $w \approx w'$ if $w \to w'$ and $w' \to w$. It is easy to see that $w \approx w'$ if $w \to w'$ and $\ell(w)=\ell(w')$.

We call $\tw, \tw' \in \tW$ {\it elementarily strongly conjugate} if $\ell(\tw)=\ell(\tw')$ and there exists $x \in W$ such that $\tw'=x \tw x \i$ and $\ell(x \tw)=\ell(x)+\ell(\tw)$ or $\ell(\tw x \i)=\ell(x)+\ell(\tw)$. We call $\tw, \tw'$ {\it strongly conjugate} if there is a sequence $\tw=\tw_0, \tw_1, \cdots, \tw_n=\tw'$ such that for each $i$, $\tw_{i-1}$ is elementarily strongly conjugate to $\tw_i$. We write $\tw \sim \tw'$ if $\tw$ and $\tw'$ are strongly conjugate. We write $\tw \tilde \sim \tw'$ if $\tw \sim \d \tw'\d^{-1}$ for some $\d\in\Om$.

The following result is proved in \cite{GP}, \cite{GKP} and \cite{He1} via a case-by-case analysis with the aid of computer for exceptional types. A case-free proof which does not rely on computer calculation was recently obtained in \cite{HN}.

\begin{thm} Assume that $W_0$ is a finite Coxeter group. Let $\co$ be a conjugacy class in $\tW$ and $\co_{\min}$ be the set of minimal length elements in $\co$. Then

(1) For each $w \in \co$, there exists $w' \in \co_{\min}$ such that
$w \rightarrow w'$.

(2) Let $w, w' \in \co_{\min}$, then $w \sim w'$.
\end{thm}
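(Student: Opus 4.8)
The plan is to prove the two statements about minimal length elements in a conjugacy class $\co$ of a finite Coxeter group $\tW = W \rtimes \Om$, reducing everything to the case where $\Om$ is trivial and then invoking the structure of the Bruhat order.

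Let me think about what this theorem says and how I'd prove it.The plan is to prove both parts by first stripping off the abelian factor $\Om$, then reducing to elliptic conjugacy classes, where a case-free cyclic-shift analysis does the real work. First I would reduce to twisted conjugacy in the finite group $W$. Since $W$-conjugation fixes the image in $\Om$, the class $\co$ lies in a single coset $W\d$ for some $\d \in \Om$; by the bijection of \ref{Cox}, studying $\co$ amounts to studying a $\d$-twisted conjugacy class in $W$, and the relations $\to$, $\approx$, $\sim$ together with the length function are all inherited. Thus I may assume every element has the form $w\d$ with $\d$ fixed and argue by induction on $\ell(w\d)=\ell(w)$, which terminates because $W$ is finite.

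The heart of (1) is the following reduction step: \emph{if $u \in \co$ is not of minimal length, then there exist $u' \approx u$ and $s \in S$ with $\ell(s u' s) < \ell(u')$.} Granting this, (1) follows at once by induction on length: from $w$ one obtains $w \to u' \to s u' s$ with $\ell(s u' s) < \ell(w)$, and one recurses until a minimal element is reached. To prove the step I would invoke the partial-conjugation method of \cite{He1}: conjugating $u$ by simple reflections one reaches $u_1$ with $u \to u_1$, $\ell(u_1) \le \ell(u)$, lying in a standard parabolic $\tW_J$ and \emph{elliptic} there (not conjugate into any proper parabolic of $\tW_J$); moreover the minimal length of $\co$ in $\tW$ equals the minimal length of the resulting class in $\tW_J$. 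This localizes both assertions to the elliptic case, so it remains to show that a non-minimal elliptic element can, after a length-preserving cyclic shift, be shortened by a single conjugation.

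For (2), let $w, w' \in \co_{\min}$. Since $\to$ is length non-increasing, any chain realizing $w \approx w'$ consists of equal-length steps $w_{k-1} \xrightarrow{s} w_k = s w_{k-1} s$; each such step is an elementary strong conjugation, because exactly one of $\ell(s w_{k-1}) = \ell(w_{k-1})+1$ or $\ell(w_{k-1}s) = \ell(w_{k-1})+1$ must hold, supplying the required length-additivity. Hence for minimal elements $w \approx w'$ already gives $w \sim w'$. After the parabolic reduction above, where strong conjugacy inside $\tW_J$ persists in $\tW$ since $S_J \subseteq S$, it therefore suffices to prove that any two minimal length elements of an \emph{elliptic} class lie in a single $\approx$-class.

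The main obstacle is precisely this elliptic core: establishing case-free that (i) a non-minimal elliptic element admits a cyclic shift followed by a strict length decrease, and (ii) all minimal length elements of an elliptic class form one $\approx$-class. This is where \cite{GP} and \cite{GKP} resorted to type-by-type verification with computer assistance; the case-free route of \cite{HN} instead runs a minimal-counterexample argument exploiting the interaction of the Bruhat order with the underlying root system and the positivity of the reflection length of elliptic elements. I expect this elliptic analysis to be substantially harder than the formal $\Om$-reduction and the parabolic localization, which are comparatively routine bookkeeping.
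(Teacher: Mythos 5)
First, a point of context: the paper does not prove this theorem at all — it is quoted from \cite{GP}, \cite{GKP}, \cite{He1}, with the case-free proof in \cite{HN} — so your attempt can only be measured against the arguments of those references. Your outline of part (1) (fix the coset $W\d$, induct on length, use the partial conjugation method of \cite{He1} to push a non-minimal element into a standard parabolic where the remaining case is elliptic, and delegate the elliptic core to \cite{GP}, \cite{GKP} or \cite{HN}) does follow the structure of those proofs and is acceptable as a sketch, given that the hard elliptic analysis is outsourced.

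The genuine gap is in part (2). You claim that, since "strong conjugacy inside $\tW_J$ persists in $\tW$", it suffices to show that minimal length elements of an \emph{elliptic} class form a single $\approx$-class. This sufficiency claim fails: distinct minimal length elements of a non-elliptic class are in general elliptic in \emph{different} standard parabolic subgroups, and nothing in your argument connects them. The simplest example is $W=S_3$ with the class of reflections: $s_1$ and $s_2$ are both of minimal length, each is elliptic in its own rank-one parabolic, and they are not $\approx$-equivalent (conjugating $s_1$ by either simple reflection fixes it or increases length); the assertion $s_1\sim s_2$ requires the non-simple element $x=s_1s_2$, for which $\ell(xs_1)=\ell(x)+\ell(s_1)$. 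Likewise in $S_4$ the minimal length $3$-cycles $s_1s_2$ and $s_2s_3$ lie in different parabolics and in different $\approx$-classes, yet must be shown to be strongly conjugate. Producing these cross-parabolic elementary strong conjugations is exactly the content of (2) beyond the elliptic case; in \cite{HN} (and in the affine analogue, Theorem \ref{main}(2) of this paper) it is obtained by a genuine geometric argument — a chain of chambers (alcoves) whose closures meet the fixed point space of the element, with each consecutive step handled by the partial conjugation theorem, Theorem \ref{par}(2) — and in \cite{GP00} by case-by-case analysis; it is not the "comparatively routine bookkeeping" your proposal makes it out to be. (A minor further point: your claim that in an equal-length step $w \xrightarrow{s} sws$ "exactly one of $\ell(sw)=\ell(w)+1$ or $\ell(ws)=\ell(w)+1$ must hold" fails in the degenerate case $sws=w$, e.g.\ $w=s$, where neither holds; the step is then trivial, so the conclusion that $\approx$ implies $\sim$ survives.)
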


\

The main purpose of this paper is to extend the above theorem to affine Weyl groups and to discuss its application to affine Hecke algebra. To do this, we first recall some basic facts on affine Weyl groups and Bruhat-Tits building.

\subsection{}\label{setup}
Let $\Phi$ be a reduced root system and $W_0$ the corresponding finite Weyl group. Then $(W_0, S_0)$ is a Coxeter group, where $S_0$ is the set of simple reflections in $W$.

Let $Q$ be the coroot lattice spanned by $\Phi^\vee$ and $$W=Q \rtimes W_0=\{t^\chi w; \chi \in Q, w \in W_0\}$$ be the affine Weyl group. The multiplication is given by the formula $(t^\chi w) (t^{\chi'} w')=t^{\chi+w \chi'} w w'$. Moreover, $(W, S)$ is a Coxeter group, where $S \supset S_0$ is the set of simple reflections in $W$.

The length function on $W$ is given by the following formula (see \cite{IM65}) $$\ell(t^\chi w)=\sum_{\a, w \i(\a) \in \Phi^+} |\<\chi, \a\>|+\sum_{\a\in \Phi^+, w \i(\a)\in \Phi^-} |\<\chi, \a\>-1|.$$

\subsection{} Let $V=Q \otimes_\ZZ \RR$. Then we have a natural action of $\tW$ on $V$. For $x, y \in V$, define $(x, y)=\sum_{\a \in \Phi} \<x, \a\> \<y, \a\>$. Then by \cite[Ch. VI, $\S$1, no.1,  Prop 3]{Bour}, $(\, , )$ is a positive-definite symmetric bilinear form on $V$ invariant under $W$. We define the norm $|| \cdot ||: V \to \RR$ by $||x||=\sqrt{(x, x)}$ for $x \in V$.

For $\a \in\Phi$ and $k\in\ZZ$, define $H_{\a,k}=\{x\in V; \<x,\a\>=k\}$. Let $\fH=\{H_{\a,k}; \ \a\in\Phi, k\in\ZZ\}$. For each hyperplane $H\in\fH$, let $s_H\in W$ be the reflection with respect to $H$. Connected components of $V-\cup_{H\in\fH}H$ are called {\it alcoves}. We denote by $\bar A$ the closure of an alcove $A$. We denote by $\D$ the fundamental alcove, i.e. the alcove in the dominant chamber such that $0 \in \bar \D$.

Let $H\in\fH$. If the set of inner points $H_A=(H\cap\bar{A})^\circ \subset H\cap\bar{A}$ spans $H$, then we call $H$ a {\it wall} of $A$ and $H_A$ a {\it face} of $A$.

Let $p,q\in V$ be two points, denote by $L(p,q)\subset V$ the affine subspace spanned by $p$ and $q$.

Let $K\subset V$ be a convex subset. We call a point $x \in K$ {\it regular} if for any $H \in \fH$, $x \in H$ implies that $K \subset H$. It is clear that all the regular points of $K$ form an open dense subset of $K$.

\subsection{} The action of $\tW$ on $V$ sends hyperplanes in $\fH$ to hyperplanes in $\fH$ and thus induces an action on the set of alcoves. It is known that the affine Weyl group $W$ acts simply transitively on the set of alcoves. For any alcove $A$, we denote by $x_A$ the unique element in $W$ such that $x_A \D=A$.

For any $\tw \in \tW$ and alcove $A$, set $\tw_A=x_A \i \tw x_A$. Then any element in the $W$-conjugacy class of $\tw$ is of the form $\tw_A$ for some alcove $A$.

For any two alcoves $A, A'$, let $\fH(A,A')$ denote the set of hyperplanes in $\fH$ separating them. Then $H \in \fH(\D, \tw \D)$ if and only if $\tw \i \a$ is a negative affine root, where $\a$ is the positive affine root corresponding to $H$. In this case, $\ell(s_H \tw)<\ell(\tw)$. We also have that $\ell(\tw)=\sharp\fH(\D, \tw\D)$.

\section{Minimal length elements in affine Weyl groups}

Unless otherwise stated, we write $W_0$ for finite Weyl group and $W$ for affine Weyl group in the rest of this paper.

\subsection{}\label{alcove}
Similar to \cite{HN}, we may view conjugation by simple reflections in the following way.

Let $A, A'$ be two alcoves with a common face $H_A=H_{A'}$, here $H \in \fH$. Let $s_H$ be the reflection along $H$ and set $s=x_A \i s_H x_A$. Then $s \in S$. Now $$\tw_{A'}=(s_H x_A) \i \tw (s_H x_A)=s x_A \i \tw x_A s=s \tw_A s$$ is obtained from $\tw_A$ by conjugating the simple reflection $s$. We may check if $\ell(\tw_{A'})>\ell(\tw_A)$ by the following criterion.

\begin{lem}\label{Dr}
We keep the notations as above. Define $f_{\tw}:V\rightarrow\RR$ by $v\mapsto||\tw(v)-v||^2$. Let $h$ be a regular point in $H_A$ and $v\in V$ such that $(v, h-h')=0$ for all $h' \in H_A$ and $h-\e v\in A$ for sufficient small $\e>0$. Set $\Dr_vf_{\tw}(h)=\lim_{t\rightarrow0}\frac{f_{\tw}(h+tv)-f_{\tw}(h)}{t}$. If $\ell(\tw_{A'})=\ell(s\tw_As)=\ell(\tw_A)+2$, then $\Dr_vf_{\tw}(h)>0$.
\end{lem}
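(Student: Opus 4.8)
The plan is to compute the directional derivative explicitly and match its sign against a geometric reading of the length hypothesis. Write $\tw$ as an affine transformation $x \mapsto \g(x) + \t$, where $\g$ is its linear part (orthogonal for $(\,,)$, since $\g$ lies in the finite part of $\tW$) and $\t \in V$. Since $\tw(h + t v) = \tw(h) + t\,\g(v)$, expanding $f_{\tw}(h+tv) = ||(\tw(h)-h) + t(\g(v)-v)||^2$ gives at once
\[ \Dr_v f_{\tw}(h) = 2\,(\tw(h)-h,\ \g(v)-v). \]
The key algebraic step is to symmetrize this in $\tw$ and $\tw\i$: using that $(\,,)$ is $\g$-invariant one has $(\tw(h)-h,\g(v)) = (\g\i(\tw(h)-h),\, v)$, and a direct computation shows $\g\i(\tw(h)-h) = -(\tw\i(h)-h)$. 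Hence
\[ \Dr_v f_{\tw}(h) = -2\big[(\tw(h)-h,\ v) + (\tw\i(h)-h,\ v)\big], \]
and it suffices to prove that both $(\tw(h)-h,v)$ and $(\tw\i(h)-h,v)$ are strictly negative.

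Next I would translate the hypothesis $\ell(\tw_{A'})=\ell(\tw_A)+2$ into separation data. Set $B=\tw A$; then $A'=s_H A$ and $\tw A'=s_{\tw H}B$, and I use $\ell(\tw_A)=\sharp\fH(A,\tw A)$. Because $A,A'$ are separated only by $H$ and $B,\tw A'$ only by $\tw H$, every hyperplane other than $H$ and $\tw H$ has the same separation status for the pair $(A,B)$ and for the pair $(A',\tw A')$; moreover each of $H$ and $\tw H$ \emph{flips} its status, except when $H=\tw H$, in which case the two lengths coincide. Therefore the hypothesis forces $H\neq\tw H$ and forces both $H$ and $\tw H$ to be non-separating for the pair $(A,\tw A)$. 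Applying the isometry $\tw\i$, the condition that $\tw H$ does not separate $A$ and $\tw A$ is equivalent to $H$ not separating $A$ and $\tw\i A$.

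Finally I would read off the signs. Normalizing the root $\a$ so that $H=H_{\a,k}$ with $A\subset\{\langle\cdot,\a\rangle<k\}$, the requirement $h-\e v\in A$ forces $\langle v,\a\rangle>0$, so $(\cdot,v)$ is a positive multiple of the functional $\langle\cdot,\a\rangle$ and $(\tw(h)-h,v)$ is a positive multiple of $\langle\tw(h),\a\rangle-k$. Its sign is exactly the side of $\tw(h)$, hence of the alcove $\tw A$, relative to $H$; regularity of $h$ guarantees $\tw(h)\notin H$, so the quantity is nonzero. Thus $H$ does not separate $A$ and $\tw A$ precisely when $(\tw(h)-h,v)<0$, and likewise $(\tw\i(h)-h,v)<0$ records that $H$ does not separate $A$ and $\tw\i A$. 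With both inner products negative, the displayed identity yields $\Dr_v f_{\tw}(h)>0$.

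The main obstacle is the bookkeeping in the middle step: one must argue carefully that only $H$ and $\tw H$ can change separation status between the two pairs and dispose of the degenerate case $H=\tw H$, and one must invoke regularity of $h$ at the right places to ensure $\tw(h),\tw\i(h)\notin H$ so that the inner products genuinely do not vanish. By contrast, the derivative computation and the symmetrization identity are routine once the affine description of $\tw$ is fixed.
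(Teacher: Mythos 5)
Your proof is correct and follows essentially the same route as the paper's: the same hyperplane bookkeeping yielding $\fH(A',\tw A')=\fH(A,\tw A)\sqcup\{H,\tw H\}$ with $H\neq \tw H$, the same derivative formula $\Dr_vf_{\tw}(h)=2\bigl(\tw(h)-h,\ \g(v)-v\bigr)$, and the same sign determination from the fact that neither $H$ nor $\tw H$ separates $A$ from $\tw A$. Your symmetrization, rewriting $(\tw(h)-h,\g(v))$ as $-(\tw\i(h)-h,v)$ and reading the sign at $H$ against $\tw\i A$, is only a cosmetic variant of the paper's direct treatment of that term via the normal vector $\tw(v)-\tw(0)$ to $\tw H$.
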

\begin{proof}
It is easy to see that $\fH(A',\tw A')-\fH(A,\tw A)\subset\{H,\tw H\}$. By our assumption, $\sharp\fH(A',\tw A')=\sharp\fH(A,\tw A)+2$. Hence $$\fH(A',\tw A')=\fH(A,\tw A)\sqcup\{H,\tw H\}$$ and $H \neq\tw H$. In particular, $h\neq\tw(h)$ since $h$ and $\tw(h)$ are regular points of $H$ and $\tw H$ respectively.

Moreover, $L(h,\tw(h))-\{h, \tw(h)\}$ consists of three connected components: $L_{-}=\{h+t(\tw(h)-h);\ t<0\}$,  $L_{0}=\{h+t(\tw(h)-h);\ 0<t<1\}$ and $L_{+}=\{h+t(\tw(h)-h);\ t>1\}$. Note that $\fH(A,\tw A)\cap\{H,\tw H\}=\emptyset$, $A\cap L_0$ and $\tw A\cap L_0$ are nonempty. Since $v$ is perpendicular to $H$ and $h-v, \tw(h)$ lie in the same connected component of $V-H$, we have $(v,h-\tw(h))>0$. Similarly $(\tw(v)-\tw(0),\tw(h)-h)>0$.
Now $\tw(h+tv)=\tw(h)+t(\tw(v)-\tw(0))$ and $\tw(h+tv)-t v-h=(\tw(h)-h)+t(\tw(v)-\tw(0)-v)$. Hence
\begin{align*}
\Dr_v f_{\tw}(h) &= 2 (\tw(h)-h, \tw(v)-\tw(0)-v) \\ &=2(\tw(h)-h, \tw(v)-\tw(0))+2(h-\tw(h),v)>0.
\end{align*}
\end{proof}

\subsection{} Let $\gr f_{\tw}$ denote the gradient of the function $f_{\tw}$ on $V$, that is, for any other vector field $X$ on $V$, we have $Xf_{\tw}=(X,\gr f_{\tw})$. Here we naturally identify $V$ with the tangent space of any point in $V$.

We'll describe where the gradient vanishes. To do this, we introduce an affine subspace $V_{\tw}$.

Notice that $\Om$ is a finite subgroup of $\tW$. For any $\tw \in \tW$, there exists $n \in \NN$ such that $\tw^n \in W$. Hence there exists $m \in \NN$ such that $\tw^{m n}=t^\l$ for some $\l \in Q$. Set $\nu_{\tw}=\l/m n \in V$ and call it the {\it Newton point of} $\tw$. Then it is easy to see that $\nu_{\tw}$ doesn't depend on the choice of $m$ and $n$. We set \[V_{\tw}=\{v \in V; \tw(v)=v+\nu_{\tw}\}.\]

\begin{lem}\label{plusv}
Let $\tw\in\tW$. Then $V_{\tw}\subset V$ is a nonempty affine subspace such that $V_{\tw}=\tw V_{\tw}=V_{\tw}+\nu_{\tw}$.
\end{lem}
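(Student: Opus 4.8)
The plan is to realize $\tw$ as an affine isometry of $V$ and to identify $\nu_{\tw}$ with the component of its translation part lying in the fixed space of its linear part. Write $\tw(v)=Av+b$, where $A$ is the linear part (an orthogonal transformation, since $\tw$ preserves the form $(\,,\,)$) and $b=\tw(0)$. Because $\tw^{N}=t^{\l}$ with $N=mn$ has trivial linear part, we have $A^{N}=\Id$; in particular $A$ is semisimple and $V=\ker(A-\Id)\oplus(A-\Id)V$ is a decomposition into complementary $A$-stable subspaces, with $A-\Id$ invertible on $(A-\Id)V$. Let $\pi_0$ denote the projection onto $\ker(A-\Id)$ along $(A-\Id)V$.

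First I would pin down $\nu_{\tw}$. Iterating the affine map gives $\tw^{N}(v)=A^{N}v+Pb$ with $P=\sum_{k=0}^{N-1}A^{k}$, and since $\tw^{N}=t^{\l}$ acts by $v\mapsto v+\l$ we read off $\l=Pb$, so $\nu_{\tw}=Pb/N$. Now $(A-\Id)P=A^{N}-\Id=0$, so $P$ vanishes on $(A-\Id)V$ and equals $N\cdot\Id$ on $\ker(A-\Id)$; hence $P=N\pi_0$ and $\nu_{\tw}=\pi_0(b)\in\ker(A-\Id)$. This identity, that the Newton point is exactly the fixed-space component of $b$, is the heart of the lemma; the remaining assertions are formal consequences.

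Next I would solve the defining equation. By definition $v\in V_{\tw}$ if and only if $(A-\Id)v=\nu_{\tw}-b$. Writing $b=\pi_0(b)+(b-\pi_0(b))=\nu_{\tw}+(b-\pi_0(b))$ shows $\nu_{\tw}-b=-(b-\pi_0(b))\in(A-\Id)V$, i.e.\ the right-hand side lies in the image of $A-\Id$. Therefore the equation is solvable: $A-\Id$ restricts to an isomorphism of $(A-\Id)V$, giving a unique solution $v_0$ there, and the full solution set is the affine subspace $v_0+\ker(A-\Id)$. Thus $V_{\tw}$ is nonempty, an affine subspace, with direction space $\ker(A-\Id)$.

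Finally the invariance follows at once. Since $\nu_{\tw}=\pi_0(b)\in\ker(A-\Id)$ lies in the direction space of $V_{\tw}$, translating by $\nu_{\tw}$ preserves $V_{\tw}$, so $V_{\tw}+\nu_{\tw}=V_{\tw}$. For $v\in V_{\tw}$ we have $\tw(v)=v+\nu_{\tw}\in V_{\tw}$, whence $\tw V_{\tw}\subseteq V_{\tw}$; and because $v\mapsto v+\nu_{\tw}$ is a bijection of $V_{\tw}$ onto itself, in fact $\tw V_{\tw}=V_{\tw}$. The only steps requiring care are the semisimplicity of $A$ (forced by $A^{N}=\Id$) and the computation $\nu_{\tw}=\pi_0(b)$, which I expect to be the main obstacle.
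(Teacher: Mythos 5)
Your proof is correct, but it takes a genuinely different route from the paper. The paper argues more synthetically: $V_{\tw}$ is an affine subspace because it is closed under affine lines; it is nonempty by averaging an orbit, $p=\frac{1}{n}\sum_{i=0}^{n-1}\tw^i(q)$, which satisfies $\tw(p)-p=\frac{1}{n}(\tw^n(q)-q)=\nu_{\tw}$; and the invariance $\tw V_{\tw}=V_{\tw}+\nu_{\tw}=V_{\tw}$ follows from a metric rigidity argument --- for $x\in V_{\tw}$ the $n$ vectors $\tw^k(x)-\tw^{k-1}(x)$ all have norm $\|\nu_{\tw}\|$ and telescope to $n\nu_{\tw}$, forcing each to equal $\nu_{\tw}$, so that $\tw(x)\in V_{\tw}$. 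You instead carry out a full linear-algebraic analysis: writing $\tw(v)=Av+b$, decomposing $V=\ker(A-\Id)\oplus(A-\Id)V$, identifying $\nu_{\tw}=\pi_0(b)$, and solving $(A-\Id)v=\nu_{\tw}-b$ explicitly. Your computation is sound (the orthogonality of $A$, or just $A^N=\Id$, does give the complementary $A$-stable decomposition, and $P=N\pi_0$ follows since $P$ kills $(A-\Id)V$ and is $N\cdot\Id$ on the kernel). What your approach buys is more structural information than the lemma asks for: the direction space of $V_{\tw}$ is exactly $\ker(A-\Id)$ and $\nu_{\tw}$ is the fixed-space component of the translation part. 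Interestingly, this is essentially the analysis the paper postpones to the \emph{next} lemma (on $\gr f_{\tw}$), where the operator $M: y\mapsto \tw_p(y)-\tw_p(0)-y$ on $V_{\tw_p}^\bot$ is precisely your $A-\Id$ restricted to the complement; the paper's proof of the present lemma is deliberately cheaper, trading your explicit solution of the affine equation for the averaging and rigidity arguments.
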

\begin{proof}
Since $\tw$ is an affine transformation, for any $p \neq q \in V_{\tw}$, the affine line $L(p, q)$ is also contained in $V_{\tw}$. Thus $V_{\tw}$ is an affine subspace of $V$.

Now we prove that $V_{\tw}$ is nonempty. Assume $\tw^n=t^{n\nu_{\tw}}$ for some $n>0$. Let $q \in V$. Set $p=\frac{1}{n}\sum_{i=0}^{n-1}\tw(q)$. Then $\tw(p)-p=\frac{1}{n}(\tw^n(p)-p)=\nu_{\tw}$. In particular, $V_{\tw} \neq \emptyset$.

For any $x\in V_{\tw}$, $||\tw^k(x)-\tw^{k-1}(x)||=||\tw(x)-x||=||\nu_{\tw}||$ and $\tw^n(x)-x=n\nu_{\tw}$. Hence $\tw^k(x)=\tw^{k-1}(x)+\nu_{\tw}$ for all $k\in\ZZ$. In particular, $\tw(x)=x+\nu_{\tw}\in V_{\tw}$.
\end{proof}

\begin{lem}
Let $v\in V$. Then $\gr f_{\tw}(v)=0$ if and only if $v\in V_{\tw}$.
\end{lem}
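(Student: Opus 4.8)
The plan is to compute $\gr f_{\tw}$ explicitly by exploiting the affine structure of $\tw$, and then to match its vanishing locus with $V_{\tw}$ using Lemma \ref{plusv}. First I would write $\tw$ as the affine transformation $v \mapsto \eta(v)+b$, where $\eta \in W_0 \rtimes \Om$ is its linear part and $b \in V$ is the translation part; since $(\,,\,)$ is invariant under the action of $\tW$, the map $\eta$ is orthogonal, so $\eta^* = \eta\i$. Setting $g(v) = \tw(v)-v = (\eta-\Id)(v)+b$, which is affine with linear part $\eta - \Id$, I compute for any direction $u \in V$
\[
\Dr_u f_{\tw}(v) = 2(g(v), (\eta-\Id)u) = 2((\eta\i - \Id)g(v),\, u).
\]
Comparing with $\Dr_u f_{\tw}(v) = (u, \gr f_{\tw}(v))$ yields
\[
\gr f_{\tw}(v) = 2(\eta\i - \Id)(\tw(v)-v).
\]
Because $\eta$ is orthogonal, $(\eta\i - \Id)z = 0$ is equivalent to $z = \eta(z)$; hence $\gr f_{\tw}(v) = 0$ if and only if $\tw(v)-v$ is fixed by $\eta$.

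Next I would record that $\nu_{\tw}$ is itself $\eta$-invariant. By Lemma \ref{plusv} there is a point $p \in V_{\tw}$ with $\tw^k(p) = p + k\nu_{\tw}$ for all $k$; applying $\tw$ to $\tw(p) = p + \nu_{\tw}$ gives $\tw^2(p) = p + \nu_{\tw} + \eta(\nu_{\tw})$, and comparison with $\tw^2(p)=p+2\nu_{\tw}$ forces $\eta(\nu_{\tw}) = \nu_{\tw}$.

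With these two facts the equivalence is short in both directions. If $v \in V_{\tw}$, then $\tw(v)-v = \nu_{\tw}$ is $\eta$-fixed, so $\gr f_{\tw}(v) = 0$. Conversely, suppose $\gr f_{\tw}(v)=0$, and put $u = \tw(v)-v$, so that $\eta(u)=u$. Then $\tw(v) = v+u$, and since $\eta(u)=u$ an easy induction gives $\tw^k(v) = v + k u$ for all $k \ge 0$. Choosing $n$ with $\tw^n = t^{n\nu_{\tw}}$ and taking $k=n$ yields $v + n u = \tw^n(v) = v + n\nu_{\tw}$, whence $u = \nu_{\tw}$, i.e. $v \in V_{\tw}$.

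I expect the only genuine content to be the gradient computation, and in particular the appearance of $\eta\i$ rather than $\eta$ through the adjoint: this is exactly what lets the orthogonality of the linear part identify the vanishing of the gradient with fixed vectors of $\eta$. Everything else is formal once Lemma \ref{plusv} provides a point of $V_{\tw}$ and the $\eta$-invariance of $\nu_{\tw}$.
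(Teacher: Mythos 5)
Your proof is correct, but it takes a genuinely different route from the paper's. The paper translates by a point $p \in V_{\tw}$ so that $V_{\tw_p}$ becomes a linear subspace, decomposes $V = V_{\tw_p} \oplus V_{\tw_p}^\bot$, shows that $M: y \mapsto \tw_p(y)-\tw_p(0)-y$ is an invertible endomorphism of $V_{\tw_p}^\bot$, and computes $f_{\tw_p}(x+y)=(y, {}^tMM(y))+||\nu_{\tw}||^2$ explicitly, so that $\gr f_{\tw_p}=2\,{}^tMM(y)$ visibly vanishes exactly on $V_{\tw_p}$. You instead write the gradient globally as $2(\eta\i-\Id)(\tw(v)-v)$, reduce the question to whether the displacement $u=\tw(v)-v$ is $\eta$-fixed, and settle the converse by a dynamical argument: $\eta(u)=u$ propagates to $\tw^k(v)=v+ku$, and comparison with $\tw^n=t^{n\nu_{\tw}}$ forces $u=\nu_{\tw}$. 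Your iteration trick thus replaces the paper's invertibility-of-$M$ argument; both ultimately rest on orthogonality of the linear part. What the paper's heavier computation buys is the explicit quadratic form, which is reused immediately in the following Corollary to write down the integral curves $C_{\tw_p}(x+y,t)=x+\Exp(2t\,{}^tMM)(y)$ of $\gr f_{\tw}$ and construct the retraction $\Lim: V \to V_{\tw}$; your shorter argument yields the vanishing locus but not this extra structure. One cosmetic point: the linear part of $\tw$ need not lie in $W_0 \rtimes \Om$ as you assert, since elements of $\Om$ act on $V$ by affine (not linear) transformations; but this is harmless, as the only facts you use are that $\eta$ is linear and orthogonal, which hold because $\tw$ acts as an affine isometry.
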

\begin{proof}
Let $p\in V_{\tw}$. Set $T_p: V\rightarrow V$ by $v\mapsto v+p$. Define $\tw_p=T_p^{-1}\circ\tw\circ T_p$ and $V_{\tw_p}=T_p \i V_{\tw}=\{v\in V; \ \tw_p(v)=v+\nu_{\tw}\}$. Then $V_{\tw_p}\subset V$ is a linear subspace. Let $V_{\tw_p}^\bot=\{v\in V; (v,V_{\tw_p})=0\}$ be its orthogonal complement.

Then $\tw_{p}(x+y)=\tw_p(x)+\tw_p(y)-\tw_p(0)=x+\tw_p(y)$ for any $x \in V_{\tw_p}$ and $y \in V_{\tw_p}^\bot$. Since $\tw_p$ is an isometry on $V$, we have that \begin{align*} ||x||^2+||y||^2 &=||x+y||^2=||\tw_p(x+y)-\tw_p(0)||^2 \\ &=||x||^2+||\tw_p(y)-\tw_p(0)||^2+2(x, \tw_p(y)-\tw_p(0)).\end{align*}

In particular, $(x, \tw_p(y)-\tw_p(0))=0$ for all $x \in V_{\tw_p}$ and $y \in V_{\tw_p}^\bot$. Hence $\tw_p(y)-\tw_p(0) \in V_{\tw_p}^\bot$ for all $y \in V_{\tw_p}^\bot$. Let $M: V_{\tw_p}^\bot \to V_{\tw_p}^\bot$ be the linear transformation defined by $y \mapsto \tw_p(y)-\tw_p(0)-y$. By definition, $\ker M \subset V_{\tw_p} \cap V_{\tw_p}^\bot=\{0\}$ for $\tw_p(0)=\nu_{\tw}$. Hence $M$ is invertible.

For $x \in V_{\tw_p}$ and $y \in V_{\tw_p}^\bot$, \begin{align*}f_{\tw_p}(x+y)&=||\tw_p(x+y)-(x+y)||^2=||M(y)||^2+||\tw_p(0)||^2\\ &=(y, {}^t M M(y))+ ||\nu_{\tw}||^2,\end{align*} where ${}^t M$ is the transpose of $M$ with respect to the inner product $(, )$ on $V$. Thus $\gr f_{\tw_p}(x+y)=2{}^tMM(y)\in V$. Hence it vanishes exactly on $V_{\tw_p}$.

Notice that $f_{\tw_p}=f_{\tw}\circ T_p$ and $T_p$ is an isometry. We have that $\gr f_{\tw}(v)=\gr f_{\tw_p}(v-p)$ for any $v\in V$. Hence $\gr f_{\tw}$ vanishes exactly on $V_{\tw}$.
\end{proof}

\begin{cor}
Let $C_{\tw}:V\times \RR\rightarrow V$ denote the integral curve of the vector field $\gr f_{\tw}$ with $C_{\tw}(v,0)=v$ for all $v \in V$. Define $\Lim:V\rightarrow V_{\tw}$ by $v\mapsto\lim_{t\rightarrow-\infty}C_{\tw}(v,t)\in V_{\tw}$. Then $\Lim:V\rightarrow V_{\tw}$ is a trivial vector bundle over $V_{\tw}$.
\end{cor}
\begin{proof}
We keep notations in the proof of the above lemma. The integral curve of $\gr f_{\tw_p}$ can be written explicitly as
$C_{\tw_p}(x+y,t)=x+\Exp(2t{}^tMM)(y)$ for any $x \in V_{\tw_p}$ and $y \in V_{\tw_p}^\bot$. Hence the integral curve $C_{\tw}(t,v)$ of $\gr f_{\tw}$ is given by $C_{\tw}(x+y,t)=x+\Exp(2t{}^tMM)(y)$ for $x \in V_{\tw}$ and $y \in V_{\tw_p}^\bot$.
Since ${}^tMM$ is self adjoint with positive eigenvalues, $\lim_{t\rightarrow-\infty}\Exp(2t{}^tMM)=0$. Hence $\Lim(x+y)=x$ for any $x\in V_{\tw}$ and $y\in V_{\tw_p}^\bot$. Thus $\Lim$ is a trivial vector bundle over $V_{\tw}$.
\end{proof}

\begin{prop}\label{red1}
Let $\tw\in\tW$ and $A$ be an alcove. Then there exists an alcove $A'$ such that $\bar A'$ contains a regular point of $V_{\tw}$ and $\tw_A\rightarrow \tw_{A'}$.
\end{prop}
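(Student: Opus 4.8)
The goal is to move the alcove $A$ to an alcove $A'$ whose closure meets $V_{\tw}$ at a regular point, while only ever performing length-nonincreasing conjugations by simple reflections (so that $\tw_A \to \tw_{A'}$). The strategy is to use the gradient flow of $f_{\tw}$ as a guide: by the preceding lemma and corollary, $\gr f_{\tw}$ vanishes exactly on $V_{\tw}$, and the flow $\Lim$ carries every point of $V$ down to $V_{\tw}$. I would pick a regular point $h$ in the interior of $A$ (or on a chosen face), follow the integral curve $C_{\tw}(h,t)$ as $t \to -\infty$, and read off the sequence of walls that the curve crosses. Each wall crossing corresponds to conjugation by a simple reflection, and the content of Lemma \ref{Dr} is precisely that crossing a wall \emph{against} the direction of increasing $f_{\tw}$ cannot increase length by $2$; that is, crossing in the descending direction of the flow gives $\ell(\tw_{A'}) \le \ell(\tw_A)$. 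So the gradient flow provides exactly the length-decreasing sequence of moves we need.

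More concretely, I would argue as follows. Fix a regular point $p \in V_{\tw}$ and consider the trivial bundle structure $\Lim^{-1}(p)$ from the corollary, which is the fiber $p + V_{\tw_p}^{\perp}$ (in the translated coordinates). The function $f_{\tw}$ is strictly decreasing along $\gr f_{\tw}$ away from $V_{\tw}$, since $Xf_{\tw} = (X, \gr f_{\tw})$ gives $\frac{d}{dt} f_{\tw}(C_{\tw}(v,t)) = \|\gr f_{\tw}\|^2 \ge 0$, so $f_{\tw}$ \emph{increases} with $t$ and decreases as $t \to -\infty$. Starting from a generic point $h$ of $\bar A$ and flowing toward $V_{\tw}$, the trajectory passes through finitely many alcoves $A = A_0, A_1, \dots, A_k$ before its limit point $\Lim(h) \in V_{\tw}$ is reached, and I want the final alcove $A_k$ to have $\Lim(h)$ in its closure as a regular point of $V_{\tw}$. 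At each transition $A_{i} \rightsquigarrow A_{i+1}$ across a common wall $H$, the curve moves in the direction in which $f_{\tw}$ decreases, so by the contrapositive of Lemma \ref{Dr} we get $\ell(\tw_{A_{i+1}}) \ne \ell(\tw_{A_i}) + 2$, i.e. $\ell(\tw_{A_{i+1}}) \le \ell(\tw_{A_i})$, yielding $\tw_{A_i} \xrightarrow{s_i} \tw_{A_{i+1}}$. Concatenating gives $\tw_A \to \tw_{A_k} = \tw_{A'}$.

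The main obstacle, and where I expect the real work to lie, is controlling the geometry of the trajectory so that it behaves well: I must choose the initial regular point $h$ and possibly perturb it so that the integral curve (i) crosses hyperplanes of $\fH$ one at a time, never passing through a codimension-$\ge 2$ intersection or running along a wall, and (ii) terminates at a \emph{regular} point of $V_{\tw}$ rather than at a point lying on some extra hyperplane of $\fH$ that cuts $V_{\tw}$. Genericity of regular points (they are open and dense in any convex set, as noted in the setup) should handle both issues, but making this rigorous requires care: the flow lines are the affine lines $x + \Exp(2t\,{}^tMM)(y)$ in the corollary's coordinates, and I would use the explicit exponential form to see that for a generic fiber direction $y$ the trajectory is transverse to every relevant hyperplane and its limit $x$ is a regular point of $V_{\tw}$. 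A second subtlety is the boundary case where $h$ already lies close to a wall of $A$; here the perpendicular-direction hypotheses in Lemma \ref{Dr} must be matched to the actual crossing direction of the flow, so I would set up the correspondence between ``crossing wall $H$ in the descending direction of $f_{\tw}$'' and ``the sign of $\Dr_v f_{\tw}(h)$'' carefully, checking that the orientation conventions agree.
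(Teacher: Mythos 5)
Your proposal follows essentially the same route as the paper's proof: flow a generically chosen point of $A$ backward along the gradient flow of $f_{\tw}$, use the explicit bundle structure from the corollary to ensure (by a measure/genericity argument) that the trajectory avoids the codimension-$\ge 2$ skeleton and that its limit is a regular point of $V_{\tw}$, and apply the contrapositive of Lemma \ref{Dr} at each wall crossing to get length non-increase. The only detail you gloss over that the paper handles explicitly is why the trajectory crosses only finitely many walls (compactness of the curve segment up to a finite time $T$ after which the limit point already lies in the closure of the current alcove), but this is a minor completion of the same argument.
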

\begin{proof}
Let $V_{\tw}^{\ge 1}\subset V_{\tw}$ be the complement of the set of regular points of $V_{\tw}$. By the above corollary, $\Lim^{-1}(V_{\tw}^{\ge 1})$ is a countable union of submanifolds of $V$ of codimension at least $1$. Let $V^{\ge 2}$ be the complement of all alcoves and faces in $V$, that is, the skeleton of $V$ of codimension $\ge 2$. Then $C_{\tw}(V^{\ge 2},\RR)\subset V$ is a countable union of images, under smooth maps, of manifolds of dimension at most $\dim V-1$. Set $$D_{\tw}=\{v \in V; v \notin C_{\tw}(V^{\ge 2},\RR) \cup \Lim^{-1}(V_{\tw}^{\ge 1})\}.$$ Then $D_{\tw}$ is dense in $V$ in the sense of Lebesgue measure.

Choose $y \in A \cap D_{\tw}$ and set $x=\Lim(y)\in V_{\tw}$. Then $x$ is a regular point of $V_{\tw}$. There exists  $T>0$ such that $x \in \bar B$ if $C_{\tw}(y, -T) \in \bar B$ for any alcove $B$.

Now we define $A_i, H_i, h_i, t_i$ as follows.

Set $A_0=A$. Suppose $A_i$ is defined, then we set $t_i=\sup\{t<T; C_{\tw}(y, -t) \in \bar{A}_i\}$. If $t_i=T_i$, the procedure stops. Otherwise, set $h_i=C_{\tw}(y, -t_i)$. By the definition of $D_{\tw}$, $h_i$ is contained in a unique face of $A_i$, which we denote by $H_i$. Let $A_{i+1} \neq A_i$ be the unique alcove such that $H_i$ is a common face of $A_i$ and $A_{i+1}$. Then $C_{\tw}(y, -t_i-\e) \in A_{i+1}$ for sufficiently small $\e>0$.

Since $\{C_{\tw}(y, -t); 0 \le t \le T\}$ is compact, it intersects with only finitely many alcove closures. Note also that alcoves appear in the above list are distinct with each other.  Thus the above procedure stops after finitely many steps. We obtain a finite sequence of alcoves $A=A_0, A_1, \cdots, A_r=A'$ in this way. Then $x \in \bar A'$ for $C_{\tw}(y, -T) \in \bar A'$.

Let $v_i \in V$ such that $(v_i, h_i-h)=0$ for $h \in H_i$ and $h_i-\e v_i \in A_i$ for sufficiently small $\e>0$. Since $C_{\tw}(y,-t_i-\e')\in A_{i+1}$ for sufficiently small $\e'>0$, $\Dr_{v_i} f_{\tw}(h_i)=(v_i,(\gr f_{\tw})(h_i))\le0$. Hence by Lemma \ref{Dr}, $\ell(\tw_{A_{i+1}})\le \ell(\tw_{A_i})$ and $\tw_{A_i}\rightarrow\tw_{A_{i+1}}$.

Therefore $\tw_A \to \tw_{A'}$ and $\bar A'$ contains a regular point $x$ of $V_{\tw}$.
\end{proof}

\subsection{}
As a consequence, there exists a minimal length element in the conjugacy class of $\tw$ which is of the form $\tw_A$ for some alcove $A$ with $V_{\tw} \cap \bar A \neq \emptyset$. However, not every minimal length element is of this form. Now we give an example.

Let $W$ be the affine group of type $\tilde A_2$ with a set of simple reflections $\{s_1=s_{\a_1},s_2=s_{\a_2},s_0=t^{\a_1+\a_2}s_1s_2s_1\}$, where $\a_1,\a_2$ are the two distinct simple positive roots. The corresponding fundamental coweight is denoted by $\o_1, \o_2$ respectively. Let $\d\in\Aut W$ such that $\d:s_1\mapsto s_2, s_2\mapsto s_1, s_0\mapsto s_0$. Let $\tw=t^{2\a_1+2\a_2}\d$ and $A=t^{\a_1}s_1s_2\D$. Then $\ell(\tw)=\ell(\tw_A)=8$ and $\tw_A$ is of minimal length in $W \cdot \tw$. Note that $V_{\tw}=\{v\in V; (v,\a_1-\a_2)=0\}$. The vertices (extremal points) of $\bar A$ are $\o_1$, $\o_1-\o_2$ and $2\o_1-\o_2$ which all lie in the same connected component of $V-V_{\tw}$. Hence $V_{\tw} \cap \bar A=\emptyset$.

\subsection{} Now we recall the ``partial conjugation action'' introduced in \cite{He1}.

For $J \subset S$, we denote by $W_J$ the standard parabolic subgroup of $W$ generated by $J$ and by ${}^J \tW$ the set of minimal coset representatives in $W_J \backslash \tW$.

For $\tw \in {}^J \tW$, set $$I(J, \tw)=\max\{K \subset J; \tw(K)=K\}.$$ %and $$\cw_{J, \d}=\{x \tw; \tw \in {}^J \tW, x \in W_{I(J, \tw; \d)}\}.$$

The following result is proved in \cite[Section 2 \& 3]{He1}. See also \cite[Theorem 2.1]{He2}.

\begin{thm}\label{par}
Let $J \subset S$ such that $W_J$ is finite. We consider the (partial) conjugation action of $W_J$ on $\tW$. Let $\co$ be an orbit. Then

(1) There exists $\tw \in {}^J \tW$, such that for any $\tw' \in \co$, there exists $x \in W_{I(J, \tw)}$ such that $\tw' \to x \tw$.

(2) If $\tw', \tw'' \in \co_{\min}$, then $\tw' \sim \tw''$.
\end{thm}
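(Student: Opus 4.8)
The plan is to use the partial conjugation to peel off a finite parabolic problem, thereby reducing everything to the finite-group theorem recalled in Section~1. The guiding picture is that, after normalizing by the minimal coset representative, every element of $\co$ should be brought to the shape $x\tw$ with $\tw\in{}^J\tW$ and $x\in W_J$, where the minimal-coset-representative property gives the length additivity $\ell(x\tw)=\ell(x)+\ell(\tw)$. Conjugation by $W_J$ will then act essentially as twisted conjugation on the finite part $x$, and the length will force this part down into the $\tw$-stable subgroup $W_{I(J,\tw)}$, whose residual twisted-conjugacy problem is handled by the finite case.

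For (1) I would first perform a crude length reduction: given $\tw'\in\co$, repeatedly conjugate by simple reflections $s\in J$ that do not increase the length, stopping at an element $\tw_1\in\co$ of minimal length in $\co$; this records $\tw'\to\tw_1$. The substance is the analysis of such a minimal $\tw_1$. Write $\tw_1=x\tw$ with $\tw:=\min(W_J\tw_1)\in{}^J\tW$ and $x\in W_J$, so that $\ell(\tw_1)=\ell(x)+\ell(\tw)$. The key lemma is that minimality forces $x\in W_{I(J,\tw)}$. To prove it I would pick a left descent $s\in J$ of $x$ and conjugate, obtaining $s(x\tw)s=(sx)(\tw s)$. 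If the reflection $\tw s\tw\i$ fails to be a simple reflection lying in $J$, a short count using $\ell(\cdot)=\sharp\fH(\D,\cdot\,\D)$ shows $\ell(s\tw_1 s)\le\ell(\tw_1)-2$, contradicting minimality; hence $\tw s\tw\i=s'\in J$, and the move becomes $s(x\tw)s=(sxs')\tw$, a length-preserving conjugation that replaces $x$ by $sxs'\in W_J$ while leaving $\tw$ untouched. Iterating this dichotomy shows that every simple reflection in the support of $x$ is carried by $\tw$-conjugation back into $J$, i.e.\ $x$ lies in the maximal $\tw$-stable subset, which is exactly $I(J,\tw)$. Since at minimal length only these $\tw$-fixing moves are available, all minimal-length elements of $\co$ carry the same ${}^J\tW$-component, which provides the single $\tw$ serving the whole orbit.

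The main obstacle is precisely this core lemma: one must control simultaneously the length of $x\tw$ and the nature of the reflection $\tw s\tw\i$, and verify that the only obstruction to strictly decreasing length is $s$ already belonging to the $\tw$-stable part $I(J,\tw)$. This is where finiteness of $W_J$ is used, to guarantee the descent terminates, and where the careful bookkeeping with minimal coset representatives and the separating-hyperplane criterion of Section~1 is essential.

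For (2) I would reduce to the finite case. Given $\tw',\tw''\in\co_{\min}$, part (1) gives $\tw'\to x'\tw$ and $\tw''\to x''\tw$ with $x',x''\in W_{I(J,\tw)}$; since the targets have the same (minimal) length, $\tw'\approx x'\tw$ and $\tw''\approx x''\tw$, whence $\tw'\sim x'\tw$ and $\tw''\sim x''\tw$. Conjugation by $\tw$ induces an automorphism $\d_\tw$ of the finite Coxeter system $(W_{I(J,\tw)},I(J,\tw))$, and the length-preserving moves above act on the $x$-part as $\d_\tw$-twisted conjugation; minimality of $x'\tw,x''\tw$ translates into $x',x''$ being of minimal length in a single $\d_\tw$-twisted conjugacy class. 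Applying the finite-group theorem (parts (1) and (2), as in \cite{GP}, \cite{GKP}, \cite{HN}) yields strong conjugation of $x'$ and $x''$ inside $W_{I(J,\tw)}$, and lifting each elementary step through the identity $(y\,x'\,\d_\tw(y)\i)\tw=y\,(x'\tw)\,y\i$ promotes it to strong conjugation $x'\tw\sim x''\tw$ in $\tW$. Combining, $\tw'\sim x'\tw\sim x''\tw\sim\tw''$, as required.
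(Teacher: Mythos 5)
Your argument has a genuine gap, and it sits exactly at the step you call the core lemma. For $\tw \in {}^J\tW$ and $s\in J$ there are three possibilities (Deodhar's lemma), not two: (i) $\tw s\in{}^J\tW$ with $\ell(\tw s)=\ell(\tw)+1$; (ii) $\tw s\in{}^J\tW$ with $\ell(\tw s)=\ell(\tw)-1$; (iii) $\tw s=s'\tw$ for some $s'\in J$. Your dichotomy accounts only for (ii), which indeed drops the length by $2$, and (iii), which replaces $x$ by $sxs'$ and leaves $\tw$ untouched. In case (i) one gets $\ell\bigl((sx)(\tw s)\bigr)=(\ell(x)-1)+(\ell(\tw)+1)=\ell(\tw_1)$: there is no length drop, hence no contradiction with minimality, and the ${}^J\tW$-component changes from $\tw$ to $\tw s$. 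In fact the core lemma is false. Take $W$ of type $A_2$, $J=\{s_1\}$, and the $W_J$-orbit $\co=\{s_1s_2,\,s_2s_1\}$; both elements have the minimal length $2$. For $\tw_1=s_1s_2$ we have $x=s_1$ and $\tw=s_2\in{}^JW$, while $I(J,s_2)=\emptyset$ (since $s_2s_1s_2\notin J$), so $x\notin W_{I(J,\tw)}$. Your further claim that all minimal length elements of $\co$ share one ${}^J\tW$-component also fails here: $s_1s_2$ and $s_2s_1$ have components $s_2$ and $s_2s_1$ respectively. Theorem \ref{par}(1) does hold for this orbit, but only with $\tw=s_2s_1$, reached from $s_1s_2$ by the length-preserving move $s_1s_2\xrightarrow{s_1}s_2s_1$ that changes the coset component---precisely the move your analysis of ``$\tw$-fixing moves'' excludes.

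Because of this the rest of the proposal collapses: the opening ``crude length reduction'' is circular (that non-increasing conjugation reaches an element of minimal length in $\co$ is part of what must be proved, and greedy descent could a priori stall at a local minimum), and part (2) relies both on the false single-$\tw$ claim and on the unproved assertion that $x',x''$ lie in a single $\d_\tw$-twisted conjugacy class of $W_{I(J,\tw)}$ --- being in one $W_J$-orbit does not immediately give this. For comparison: the paper does not prove Theorem \ref{par} at all; it quotes it from \cite{He1}, where the argument runs a more delicate induction in which the ${}^J\tW$-component is allowed to move (case (i) above is used as a descent step for the length of the $W_J$-part), followed by a separate analysis for strong conjugacy. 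Your mechanism in (2) for lifting twisted conjugation through $\tw$ via $(y\,x'\,\d_\tw(y)\i)\tw=y(x'\tw)y\i$ is indeed how the finite Geck--Pfeiffer theorem enters in \cite{He1}, but it can only be run after (1) is established by an argument that handles the missing case.
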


\subsection{}
We'll show that $\tw_{A'}$ appeared in Prop \ref{red1} is of the form $x y$ for some $y \in {}^J \tW$ and $x \in W_{I(J, \tw)}$. To do this, we introduce some more notations.

Let $K\subset V$ be a convex subset. Let $\fH_K=\{H\in\fH; K\subset H\}$ and $W_K\subset W$ be the subgroup generated by $s_H$ ($H\in\fH_K$). For any two alcoves $A$ and $A'$, define $\fH_K(A,A')=\fH(A,A')\cap\fH_K$.

Let $A$ be an alcove. We set $W_{K,A}=x_A^{-1}W_K x_A$. If $\bar A$ contains a regular point $v$ of $K$, then $W_{K,A}=W_{I(K, A)}$ is the subgroup of $W$ generated by simple reflections $I(K, A)=\{s_H\in S; v \in x H\}$.

\begin{lem}\label{f1}
Let $\tw \in \tW$ and $K \subset V_{\tw}$ be an affine subspace with $\tw(K)=K$. Let $A$ be an alcove such that $A$ and $\tw A$ are in the same connected component of $V-\cup_{H \in \fH_K}H$. Assume furthermore that $\bar A$ contains an element $v \in K$ such that for each $H \in \fH$, $v, \tw(v) \in H$ implies that $K \subset H$. Then
$$\ell(\tw_A)=\sharp \fH(A, \tw A)=\<\bar \nu_{\tw}, 2\rho\>.$$ Here $\rho$ is the half sum of the positive roots in $\Phi$ and $\bar{\nu}_{\tw}$ is the unique dominant element in the $W_0$-orbit of $\nu_{\tw}$.
\end{lem}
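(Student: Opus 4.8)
The plan is to prove the two equalities separately. The first, $\ell(\tw_A)=\sharp\fH(A,\tw A)$, is immediate from the facts recalled in \S\ref{setup}: since $\tw_A=x_A\i\tw x_A$ we have $\tw_A\D=x_A\i\tw A$, so $\ell(\tw_A)=\sharp\fH(\D,x_A\i\tw A)$, and applying the isometry $x_A\in W$, which preserves $\fH$ and the separation relation, turns this into $\sharp\fH(A,\tw A)$. The substance of the lemma is therefore the identity $\sharp\fH(A,\tw A)=\<\bar\nu_{\tw},2\rho\>$, and my idea is to count the separating hyperplanes along the segment from $v$ to $\tw(v)=v+\nu_{\tw}$.

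Let $L=\{v+t\nu_{\tw};\ t\in\RR\}$. Since $v,\tw(v)\in V_{\tw}$ and $V_{\tw}$ is an affine subspace, Lemma \ref{plusv} gives $L\subset V_{\tw}$ and shows that $\tw$ acts on $L$ by the unit shift $v+t\nu_{\tw}\mapsto v+(t+1)\nu_{\tw}$. I would first observe that every $H=H_{\a,k}\in\fH(A,\tw A)$ satisfies $\<\nu_{\tw},\a\>\neq0$: if $\<\nu_{\tw},\a\>=0$ then $\<v,\a\>=\<\tw(v),\a\>$, and since $v\in\bar A$ and $\tw(v)\in\overline{\tw A}$ lie on opposite closed sides of $H$, both must lie on $H$; the hypothesis then forces $K\subset H$, i.e. $H\in\fH_K$, contradicting that $A$ and $\tw A$ lie in the same component of $V-\cup_{H'\in\fH_K}H'$. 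Consequently each separating hyperplane meets $L$ transversally in a single point $v+t_H\nu_{\tw}$, and because $v,\tw(v)$ lie on opposite closed sides one has $t_H\in[0,1]$.

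The core of the argument is the bookkeeping at the two endpoints. I would partition $\fH(A,\tw A)$ according to whether $t_H\in(0,1)$, $t_H=0$ (that is $v\in H$), or $t_H=1$ (that is $\tw(v)\in H$), writing the three subsets as $O,P_1,Q_1$; conversely every transversal hyperplane with $t_H\in(0,1)$ does separate $A$ from $\tw A$, so $O$ is exactly the set of such hyperplanes. For the endpoints I use that $\tw$ induces a bijection $H\mapsto\tw H$ between the transversal hyperplanes through $v$ and those through $\tw(v)$, and I claim that for each such $H$ \emph{exactly one} of $H,\tw H$ separates $A$ from $\tw A$. Indeed, normalising so that $\<\nu_{\tw},\a\>>0$, the point $\tw(v)$ lies strictly on the positive side of $H$ and $\tw\i(v)=v-\nu_{\tw}$ strictly on its negative side; hence $H$ separates $A,\tw A$ iff $A$ lies on the negative side of $H$, whereas $\tw H$ separates $A,\tw A$ iff $H$ separates $\tw\i A,A$, i.e. iff $A$ lies on the positive side of $H$. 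As $v\in\bar A\cap H$ and $A$ is a full-dimensional alcove, $A$ lies strictly on exactly one side, proving the claim. Summing over the bijection gives $\sharp P_1+\sharp Q_1$ equal to the number of transversal hyperplanes through $v$, so $\sharp\fH(A,\tw A)$ equals the number $C$ of transversal hyperplanes meeting the half-open segment $\{v+t\nu_{\tw};\ 0\le t<1\}$.

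It then remains to evaluate $C$. Since $\tw$ permutes $\fH$, preserves $\fH_K$, and shifts $L$ by one unit in $t$, the family of transversal hyperplanes meeting $L$, together with their parameters $t_H$, is invariant under $t\mapsto t+1$; hence the count over any interval $[0,N)$ equals $NC$. On the other hand, choosing $n_0$ with $\tw^{n_0}=t^{n_0\nu_{\tw}}$ and $n_0\nu_{\tw}\in Q$, the count over $[0,n_0N)$ is, hyperplane family by hyperplane family, the number of integers $k$ in the half-open real interval between $\<v,\a\>$ and $\<v,\a\>+n_0N\<\nu_{\tw},\a\>$; since $n_0N\<\nu_{\tw},\a\>\in\ZZ$ this number is exactly $n_0N|\<\nu_{\tw},\a\>|$. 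Summing over $\a\in\Phi^+$ and using $\sum_{\a\in\Phi^+}|\<\nu_{\tw},\a\>|=\sum_{\a\in\Phi^+}\<\bar\nu_{\tw},\a\>=\<\bar\nu_{\tw},2\rho\>$, valid because $\bar\nu_{\tw}$ is dominant, the two counts give $n_0NC=n_0N\<\bar\nu_{\tw},2\rho\>$, whence $C=\<\bar\nu_{\tw},2\rho\>$. I expect the delicate point to be the endpoint bookkeeping of the third paragraph: one must ensure that hyperplanes through $v$ or $\tw(v)$ are counted exactly once, which is precisely what the ``exactly one of $H,\tw H$'' dichotomy guarantees; the remaining steps are routine once the transversality and the $\fH_K$-exclusion are in place.
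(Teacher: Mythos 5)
Your proposal is correct, and its two pillars --- the hypothesis on $v$ forcing every separating hyperplane to meet the axis line $L$ transversally, and the $\tw$-periodicity along $L$ --- are exactly the pillars of the paper's proof; but the bookkeeping is organized quite differently. The paper telescopes over several periods: setting $v_i=\tw^i(v)$ and choosing $n$ with $\tw^n=t^{n\nu_{\tw}}$, it proves that the sets $\fH(\tw^{i-1}A,\tw^iA)$ are pairwise disjoint and that their union is $\fH(A,\tw^nA)$, whence $n\,\sharp \fH(A,\tw A)=\sharp\fH(A,t^{n\nu_{\tw}}A)$, and then appeals to the standard count of hyperplanes separating an alcove from its translate. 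You instead stay within a single period: your dichotomy that exactly one of $H,\tw H$ separates $A$ from $\tw A$ plays the role of the paper's disjointness-plus-union argument (both devices exist precisely to avoid double-counting hyperplanes passing through the orbit points $v_i$), and you then re-derive the translation count by explicit lattice-point counting over $[0,n_0N)$ instead of quoting it. What your version buys is self-containedness --- the identity $\sum_{\a\in\Phi^+}|\<\nu_{\tw},\a\>|=\<\bar\nu_{\tw},2\rho\>$ and the integer-interval count are spelled out --- at the price of the endpoint case analysis at $v$ and $\tw(v)$; the paper's telescoping pushes those endpoint issues into its step (a) and keeps the final evaluation shorter by invoking the known translation formula. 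One cosmetic point: when $\nu_{\tw}=0$ your line $L$ degenerates to a point, so you should note (as the paper does explicitly) that this case is already finished by your first observation alone, which forces $\fH(A,\tw A)=\emptyset$, matching $\<\bar\nu_{\tw},2\rho\>=0$.
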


\begin{proof}
By our assumption, $\fH(A, \tw A) \subset \fH-\fH_K$. Moreover, for any $H \in \fH(A, \tw A)$, the intersection of $H$ with the closed interval $[v, \tw(v)]$ is nonempty.

If $\nu_{\tw}=0$, then $\tw(v)=v$. For any $H \in \fH(A, \tw A)$, we have $v \in H$, hence $H \in \fH_K$. That is a contradiction. Hence $\fH(A, \tw A)=\emptyset$ and $\ell(\tw_A)=\<\bar \nu_{\tw}, 2\rho\>=0$.

Now we assume $\nu_{\tw} \neq 0$.  Set $v_i=\tw^i(v)=v+i \nu_{\tw} \in K$ for $i \in \ZZ$. Then all the $v_i$ span an affine line $L$. We prove that

(a) If $i<j$, then $\fH(\tw^{i-1} A, \tw^i A) \cap \fH(\tw^{j-1} A, \tw^j A)=\emptyset$.

Let $H \in \fH(\tw^{i-1} A, \tw^i) \cap \fH(\tw^{j-1} A, \tw^j A)$. Then $H \cap L=H \cap [v_{i-1}, v_i]  \cap [v_{j-1}, v_j] \neq \emptyset$. Thus $i=j-1$ and $v_i \in H$. Hence $H \in \fH(\tw^{i-1} A, \tw^j A)$. Therefore $\tw^i A$ and $\tw^j A$ are in the same connected component of $V-H$, that is, $H \notin \fH(\tw^{j-1} A, \tw^j A)$.

(a) is proved.

Now we prove that

(b) For $i<j$,  $\fH(\tw^{i} A, \tw^j)=\bigcup_{k=i+1}^j\fH(\tw^{k-1} A, \tw^k A)$.

If $H \notin \bigcup_{k=i+1}^j\fH(\tw^{k-1} A, \tw^k A)$, then $\tw^i A, \tw^{i+1} A, \cdots, \tw^j A$ are all in the same connected component of $V-H$. Thus $H \notin \fH(\tw^{i} A, \tw^j)$.

Let $H \in \fH(\tw^{r-1} A, \tw^r A)$ for some $i < r \leq j$. Then $H \cap L=H \cap [v_{r-1}, v_r]=\{e\}$. If $e \notin \{v_i, v_j\}$, then $H \in \fH(\tw^i A, \tw^j A)$. If $e=v_j$, then $H \in \fH(\tw^{j-1} A, \tw^j A)$ and $v_{j-1}$ and $v_i$ are in the same connected component of $V-H$. Hence $\tw^i A$ and $\tw^{j-1}A$ are in the same connected component of $V-H$, while $\tw^{j-1} A$ and $\tw^j A$ are in different connected components of $V-H$. Hence $H \in \fH(\tw^i A, \tw^j A)$. If $e=v_i$, by a similar argument we have that $H \in \fH(\tw^i A, \tw^j A)$.

Let $n \in \ZZ$ such that $\tw^n=t^{n \nu_{\tw}}$. Then by (a) and (b), we have that
$$n \sharp \fH(A, \tw A)= \sum_{i=0}^{n-1} \sharp \fH(\tw^i(A), \tw^{i+1}(A))=\sharp \fH(A, \tw^n  A)=\sharp \fH(A, t^{n \nu_{\tw}}A).$$
Hence $\ell(\tw_A)=\<\nu_{\tw}, 2\rho\>$.
\end{proof}

\begin{prop}\label{ell}
Let $\tw\in\tW$ and $K\subset V_{\tw}$ be an affine subspace with $\tw(K)=K$ . Let $A$ be an alcove such that $\bar A$ contains a regular point $v$ of $K$. Then $\tw_A=u \tw_{K, A}$ for some $u \in W_{I(K, A)}$ and $\tw_{K, A} \in {}^{I(K, A)} \tW^{I(K, A)}$ with $\ell(u)=\sharp\fH_K(A, \tw A)$, $\tw_{K, A} I(K, A)=I(K, A)$ and $\ell(\tw_{K, A})=\<\overline{\nu}_{\tw },2\rho\>$.
\end{prop}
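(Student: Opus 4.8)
The plan is to use the standard parabolic decomposition with respect to $W_J$, where $J=I(K,A)$, and to read off both factors geometrically from the arrangement $\fH_K$. Set $J=I(K,A)$, so that $W_J=W_{K,A}=x_A\i W_K x_A$. Since $v$ is a regular point of $K$, the hyperplanes of $\fH$ through $v$ are exactly those in $\fH_K$; hence $\fH_K$ is precisely the set of reflection hyperplanes of the finite group $W_K=\mathrm{Stab}_W(v)$, which acts simply transitively on the connected components of $V-\cup_{H\in\fH_K}H$ (the ``$K$-chambers''), with the alcoves having $v$ in their closure in bijection with $W_K$. Write $\tw_A=u\,z$ with $z\in{}^J\tW$ the minimal length representative of $W_J\tw_A$ and $u\in W_J$, so that $\ell(\tw_A)=\ell(u)+\ell(z)$. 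I set $\tw_{K,A}:=z$ and establish the four assertions for $u$ and $z$ separately.

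First I would identify $\ell(u)$. Put $u'=x_A u x_A\i\in W_K$ and $B=x_A z\D=u'^{-1}\tw A$. Since $z\in{}^J\tW$, the alcove $z\D$ lies in the same $W_J$-chamber as $\D$; transporting by $x_A$, the alcove $B$ lies in the same $K$-chamber as $A$. Hence $\tw A=u'B$ lies in the $K$-chamber $u'\,\mathcal C(A)=\mathcal C(u'A)$, so $u'A$ and $\tw A$ lie in one $K$-chamber and therefore $\fH_K(A,u'A)=\fH_K(A,\tw A)$. On the other hand, $W_J$ is a standard parabolic, so $\ell(u)$ equals its length in $W_J$, which is the number of reflection hyperplanes of $W_J$ separating $\D$ from $u\D$; transporting by $x_A$ this number is $\sharp\fH_K(A,u'A)$. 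Combining, $\ell(u)=\sharp\fH_K(A,\tw A)$, as required.

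The crucial step is to analyse $z=x_A\i g\,x_A$, where $g:=u'^{-1}\tw$. Because $u'\in W_K$ fixes $K$ pointwise and $\tw(v)=v+\nu_{\tw}$ is again a regular point of $K$, one checks that $g(K)=K$, that $g$ acts on $K$ by the translation $\nu_{\tw}$ (so $K\subset V_g$ and $\nu_g=\nu_{\tw}$), and that $g(v)=v+\nu_{\tw}$. As $\mathrm{Stab}_W(g(v))=\mathrm{Stab}_W(v)=W_K$, the element $g$ normalizes $W_K$; transporting by $x_A$, $z$ normalizes $W_J$, whence $z\,J\,z\i=J$. Together with $z\in{}^J\tW$ this forces $\ell(zs)=\ell(z)+1$ for $s\in J$ and hence $z\in{}^J\tW^J$, giving the assertions $\tw_{K,A}\in{}^{I(K,A)}\tW^{I(K,A)}$ and $\tw_{K,A}I(K,A)=I(K,A)$.

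Finally I would compute $\ell(z)=\ell(g_A)=\sharp\fH(A,gA)$ by applying Lemma \ref{f1} to the element $g$, the subspace $K$ and the alcove $A$. The hypotheses hold: $g(K)=K$ and $K\subset V_g$ were just verified; taking the point $v\in\bar A\cap K$, regularity of $v$ gives the implication ``$v,g(v)\in H\Rightarrow K\subset H$''; and $gA=B$ lies in the same $K$-chamber as $A$ by the first step, so $\fH_K(A,gA)=\emptyset$. Lemma \ref{f1} then yields $\ell(z)=\<\bar\nu_g,2\rho\>=\<\bar\nu_{\tw},2\rho\>$. The main obstacle, and the point requiring care, is precisely the introduction of the auxiliary element $g=u'^{-1}\tw$: one must recognise that the minimal coset representative $z$ is conjugate not to $\tw$ but to this $g$, and that $g$ simultaneously preserves $K$ with the same Newton point $\nu_{\tw}$ and carries $A$ into the $K$-chamber of $A$, the latter being exactly the geometric translation of the condition $z\in{}^J\tW$. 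These two facts are what make Lemma \ref{f1} applicable and deliver the length formula.
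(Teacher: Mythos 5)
Your proof is correct and follows essentially the same route as the paper's: decompose $\tw_A$ with respect to the standard parabolic $W_{I(K,A)}$, show the minimal coset representative normalizes $W_{I(K,A)}$ (hence preserves $I(K,A)$) and carries $A$ into its own $K$-chamber, read off $\ell(u)$ as $\sharp\fH_K(A,\tw A)$, and finish by applying Lemma \ref{f1} to the representative. The only differences are minor: you use a one-sided instead of a double coset decomposition, you prove $\nu_g=\nu_{\tw}$ geometrically (via $g$ acting on $K$ as translation by $\nu_{\tw}$) where the paper uses the telescoping identity $(\tw')^n=\tw^n$, and as a result your argument treats $\nu_{\tw}=0$ and $\nu_{\tw}\neq 0$ uniformly rather than as separate cases.
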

\begin{proof}
We may assume that $A$ is the fundamental alcove $\D$ by replacing $\tw$ by $\tw_A$. We simply write $I$ for $I(K, \D)$.

If $\nu_{\tw}=0$, then $\tw(v)=v \in \bar \D \cap \tw \bar \D$. If $H \in \fH$ is a hyperplane separating $\D$ and $\tw \D$, then $v \in H$. Since $v$ is regular in $K$, $K \subset H$. Thus $H \in \fH_K$ and $s_H \in W_I$. Thus there exists $u \in W_I$ such that $u \i \tw \D=\D$. In this case, $\tw_{K, \D}=u \i \tw$ is a length-zero element in $\tW$ and $\tw=u \tw_{K, \D}$.

Now we assume that $\nu_{\tw} \neq 0$. We have that $\tw=u' \tw' u''$ for some $u', u'' \in W_I$ and $\tw' \in {}^I \tW^I$.
Since $\tw(K)=K$, then $\tw(\fH_K)=\fH_K$ and $\tw W_I \tw \i=W_I$. Hence $\tw' W_I (\tw') \i=W_I$ and $\tw'(I)=I$.

Let $C$ be the connected component of $V-\cup_{H\in\fH_K}H$ that contains $\D$. We have that $\tw' (\D) \subset C$. Otherwise, there exists $H \in \fH_K$ separating $\D$ and $\tw'(\D)$. Hence $\ell(s_H \tw')<\ell(\tw')$.  This contradicts our assumption that $\tw' \in {}^I \tW$. Hence $\ell(u)=\sharp\fH_K(\D, \tw \D)$ and $\fH(\D, \tw'(\D)) \cap \fH_K=\emptyset$.

Since $W_I$ is a finite group and the conjugation by $\tw$ is a group automorphism on $W_K$, there exists $n>0$ such that $$(\tw')^n \tw^{-n}=u \i (\tw u \i \tw \i) \cdots (\tw^{n-1} u \i \tw^{-n+1})=1.$$ Hence $(\tw')^n=\tw^n$ and there exists $m>0$ such that $m n \nu_{\tw} \in Q$ and $(\tw')^{mn}=\tw^{mn}=t^{m n \nu_{\tw}}$.

Note that $v$ and $\tw(v)=\tw'(v)=v+\nu_{\tw}$ are regular points in $K$. Apply Lemma \ref{f1}, we have $\ell(\tw')=\<\bar \nu_{\tw'}, 2\rho\>=\<\bar \nu_{\tw}, 2\rho\>$.
\end{proof}

\begin{cor}\label{finiteW}
Let $\tw\in\tW$ be of minimal length in its conjugacy class. Then $\tw$ is of finite order if and only if $\tw\in W_J\rtimes\<\d\>$ for some proper subset $J$ of $S$ and $\d\in\Om$ with $\d(J)=J$ such that the corresponding parabolic subgroup $W_J$ is finite.
\end{cor}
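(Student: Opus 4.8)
The plan is to convert the finite-order condition into a statement about the Newton point $\nu_{\tw}$ and then feed it into Propositions \ref{red1} and \ref{ell}. The first observation is the elementary equivalence: $\tw$ has finite order if and only if $\nu_{\tw}=0$. Indeed, choosing $m,n$ with $\tw^{mn}=t^\l$ for some $\l\in Q$, finiteness of the order forces $t^\l=1$, hence $\l=0$ and $\nu_{\tw}=\l/mn=0$; conversely $\nu_{\tw}=0$ gives $\tw^{mn}=t^{0}=1$. This reduces the corollary to the claim that a minimal length $\tw$ has $\nu_{\tw}=0$ exactly when it lies in some $W_J\rtimes\<\d\>$ with $J$ proper, $W_J$ finite and $\d(J)=J$. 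The ``if'' direction needs no minimality: since $\Om$ is finite and $W_J$ is finite, the condition $\d(J)=J$ makes $W_J\rtimes\<\d\>$ a finite subgroup of $\tW$, so every one of its elements, in particular $\tw$, has finite order.

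For the ``only if'' direction I would reason as follows. Assume $\nu_{\tw}=0$, so by Lemma \ref{plusv} the set $V_{\tw}$ is the nonempty fixed-point set of $\tw$ and satisfies $\tw V_{\tw}=V_{\tw}$. Apply Proposition \ref{red1} to the fundamental alcove $\D$ (so that $\tw_\D=\tw$): there is an alcove $A'$ whose closure contains a regular point $v$ of $V_{\tw}$ and with $\tw\to\tw_{A'}$; minimality of $\tw$ forces $\ell(\tw_{A'})=\ell(\tw)$. Now apply Proposition \ref{ell} with $K=V_{\tw}$, which is legitimate since $\tw(V_{\tw})=V_{\tw}$: it yields $\tw_{A'}=u\,\tw_{K,A'}$ with $u\in W_I$, where $I=I(V_{\tw},A')$, and with $\ell(\tw_{K,A'})=\<\bar\nu_{\tw},2\rho\>=0$. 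A length-zero element of $\tW$ is basic, so $\tw_{K,A'}=\d$ for some $\d\in\Om$, and Proposition \ref{ell} also gives $\d(I)=I$. Moreover $W_I=x_{A'}^{-1}W_{V_{\tw}}x_{A'}$ is generated by reflections in hyperplanes containing $V_{\tw}$, all of which fix the regular point $v$; hence $W_I$ lies in the finite stabilizer of $v$, so $W_I$ is finite and $I\subsetneq S$. Thus $\tw_{A'}=u\d\in W_I\rtimes\<\d\>$ is precisely of the required shape.

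The remaining and genuinely delicate point is that the corollary asks for $\tw$ itself, whereas the construction produces only the conjugate $\tw_{A'}$, and I expect this to be the main obstacle. To close the gap I would upgrade the finite-order case of Proposition \ref{red1} to the assertion that a minimal length $\tw$ already meets $\bar\D$, i.e. $V_{\tw}\cap\bar\D\neq\emptyset$; granting this one may simply take $A'=\D$ and $J=I(V_{\tw},\D)$ above, so that $\tw=\tw_\D=u\d\in W_J\rtimes\<\d\>$ directly. The reason to expect $V_{\tw}\cap\bar\D\neq\emptyset$ is that the reduction $\D=A_0,A_1,\dots,A_r=A'$ in Proposition \ref{red1} proceeds by conjugations that never increase length, so that minimality of $\tw$ forces every step to \emph{preserve} length; one then has to exclude the possibility that $\bar\D$ is cut off from $V_{\tw}$ by a wall across which the gradient flow of $f_{\tw}$ strictly decreases length. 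Here the equal-length constraint, combined with the explicit count $\ell(\tw_{A'})=\sharp\fH_{V_{\tw}}(A',\tw A')$ furnished by Proposition \ref{ell}, is what I would use to push the incidence with $V_{\tw}$ all the way back to $\D$, thereby identifying $\tw$ itself as an element of a finite $W_J\rtimes\<\d\>$.
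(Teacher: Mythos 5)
Your reduction of ``finite order'' to $\nu_{\tw}=0$, the ``if'' direction, and the first half of the ``only if'' direction (applying Propositions \ref{red1} and \ref{ell} with $K=V_{\tw}$ to produce a conjugate $\tw_{A'}=u\d$ with $u\in W_I$, $W_I$ finite, $\d\in\Om$, $\d(I)=I$) all agree with the paper. The genuine gap is exactly at the point you flag yourself: passing from the conjugate $\tw_{A'}$ back to $\tw$. Your plan is to first prove that a minimal length element of finite order already satisfies $V_{\tw}\cap\bar\D\neq\emptyset$, but you give no proof of this, only the expectation that the equal-length constraint along the gradient flow can be ``pushed back'' to $\D$. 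That statement is true, but it is essentially equivalent to the corollary itself: granting it, Proposition \ref{ell} (applied, say, with $K$ a $\tw$-fixed point of $\bar\D$) yields the conclusion, and conversely any $\tw\in W_J\rtimes\<\d\>$ with $W_J$ finite and $\d(J)=J$ fixes a point of the face of $\bar\D$ cut out by $J$ (average a point of that face over the finite group $\<\d\>$). So deferring to it without proof is circular. Worse, the mechanism you sketch never uses $\nu_{\tw}=0$ in an essential way, whereas the example in \S 2.5 of the paper exhibits a minimal length element $\tw_A$ (with $\nu_{\tw_A}\neq 0$) for which $V_{\tw_A}\cap\overline{\D}=\emptyset$; hence no argument of the kind you describe can work without exploiting finiteness of order, and you do not indicate how it would.

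The paper closes this gap combinatorially rather than geometrically, and this is the step your proposal is missing. Since $\tw$ has minimal length, the relation $\tw\to\tw_A$ from Proposition \ref{red1} upgrades to $\tw\approx\tw_A$, so there is a chain of \emph{length-preserving} conjugations by simple reflections leading from $\tw_A$ back to $\tw$. One then checks that the set $W_I\d$ (with $W_I$ finite, $\d\in\Om$, $\d(I)=I$) is stable under such conjugations: for $x=u\d$ with $u\in W_I$ and $s\in S$, if $s\in I$ then $sxs=\bigl(su\,\d(s)\bigr)\d\in W_I\d$; if $s\notin I$ then also $\d(s)\notin I$, and the exchange condition shows that $\ell(sxs)\le\ell(x)$ forces $sxs=x$ (otherwise $\d(s)$ would lie in $W_I$). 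Hence every element $\approx$-equivalent to $\tw_A$, in particular $\tw$ itself, lies in $W_I\d\subset W_I\rtimes\<\d\>$, which is the desired conclusion with $J=I$. This short algebraic transfer is what your write-up needs in place of the unproven claim $V_{\tw}\cap\bar\D\neq\emptyset$.
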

\begin{proof}
The ``if'' part is clear.

Now assume that $\tw$ is of finite order. Let $K=V_{\tw}$. By Proposition \ref{red1} and \ref{ell}, there exists an alcove $A$ such that $\tw \approx \tw_A$ and $\tw_A=u \tw_{K, A}$ for some $u \in W_{I(K, A)}$ and $\tw_{K, A} \in {}^{I(K, A)} W$ with $\tw_{K, A} I(K, A)=I(K, A)$ and $\ell(\tw_{K, A})=\<\overline{\nu}_{\tw },2\rho\>$. Since $\tw$ is of finite order, $\nu_{\tw}=0$ and $\ell(\tw_{K, A})=0$. So $\tw_{K, A} \in \Om$.

By definition, $I(K, A)$ is a subset of $S$ such that $W_{I(K, A)}$ is finite. We have that $\tw_A \in W_{I(K, A)} \tw_{K, A}$ and $\tw \approx \tw_A$. Hence $\tw \in W_{I(K, A)} \tw_{K, A}$.
\end{proof}

\

Now we may prove the main result of this section, generalizing $\S$0.1 (1) and (2) to affine Weyl groups.

\begin{thm}\label{main} Let $\co$ be a $W$-conjugacy class in $\tW$ and $\co_{\min}$ be the set of minimal length elements in $\co$. Then

(1) For each element $\tw' \in \co$, there exists $\tw'' \in \co_{\min}$ such that $\tw' \rightarrow \tw''$.

(2) Let $\tw', \tw'' \in \co_{\min}$, then $\tw' \sim \tw''$.
\end{thm}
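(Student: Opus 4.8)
The plan is to reduce the affine statement to the already-established finite-group case recalled in \S\ref{to}, using the Newton-point subspace $V_{\tw}$ as a bridge. The three structural results at hand dovetail exactly for this purpose. Fixing a representative $\tw\in\co$ and writing an arbitrary element of $\co$ as $\tw_A=x_A\i\tw x_A$, Proposition \ref{red1} pushes any alcove until its closure meets $V_{\tw}$ in a regular point; Proposition \ref{ell} then factors the resulting element as a finite-group part times a ``pure'' part whose length is the fixed number $\<\bar\nu_{\tw},2\rho\>$; and Theorem \ref{par} together with the finite case disposes of that finite part. I would carry out (1) first and then deduce (2) from the same reduction.

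For (1), given $\tw'=\tw_A$, apply Proposition \ref{red1} to get an alcove $A'$ whose closure contains a regular point of $K:=V_{\tw}$ with $\tw'\to\tw_{A'}$; note $\tw(K)=K$ by Lemma \ref{plusv}, so Proposition \ref{ell} applies and writes $\tw_{A'}=uy$ with $y=\tw_{K,A'}\in{}^I\tW^I$, $I=I(K,A')$, the parabolic $W_I$ finite, $y(I)=I$, and $\ell(\tw_{A'})=\ell(u)+\<\bar\nu_{\tw},2\rho\>$. The relation $y(I)=I$ is what makes the partial conjugation of $W_I$ preserve the coset $W_I y$: for $s\in I$ one computes $s(uy)s=(su\,ysy\i)y$, so on the $W_I$-coordinate the action is $u\mapsto su\,(ysy\i)$, i.e. ordinary conjugation of $u\d_y$ in $W_I\rtimes\<\d_y\>$, where $\d_y$ is the diagram automorphism $s\mapsto ysy\i$. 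Since $\ell(uy)=\ell(u)+\ell(y)$, each length step downstairs lifts verbatim to a step $uy\xrightarrow{s}(su\,ysy\i)y$. Applying the finite theorem of \S\ref{to} to the class of $u\d_y$ in $W_I\rtimes\<\d_y\>$ reduces $u$ to an element $u_0$ of minimal length in its class, yielding $\tw'\to\tw_{A'}\to u_0y$.

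The remaining point, and the main obstacle, is to show that $u_0y$ is genuinely of minimal length in $\co$. By the length formula every element produced this way has length $\<\bar\nu_{\tw},2\rho\>$ plus the minimal length of the finite part, so minimality in $\co$ amounts to showing this minimal finite length is independent of the regular point chosen by Proposition \ref{red1}, equivalently of the chamber of the induced arrangement $\{H\cap V_{\tw}:V_{\tw}\not\subset H\}$ over which $A'$ sits. A priori distinct chambers produce distinct parabolics $W_I$ and distinct twisted classes, and Theorem \ref{par} only controls a single parabolic. I would resolve this by a gluing argument along $V_{\tw}$: prove that if the closures of two alcoves meet $V_{\tw}$ in regular points of adjacent chambers, separated by one wall $H$ with $V_{\tw}\not\subset H$, then their minimal-length forms are strongly conjugate, realizing the wall-crossing by an explicit elementary strong conjugation; within one chamber, strong conjugacy of minimal elements is exactly Theorem \ref{par}(2). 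Since $V_{\tw}$ is connected, chaining such crossings makes all minimal-length forms attached to $\co$ strongly conjugate, hence of a common length, which is then the minimum over $\co$.

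This same gluing yields (2). Given $\tw',\tw''\in\co_{\min}$, run the reduction of paragraph two on each; because a chain $\to$ issuing from a minimal-length element cannot decrease length, these reductions are length-preserving, so $\tw'\approx u_0y$ and $\tw''\approx u_0'y'$ and hence $\tw'\sim u_0y$, $\tw''\sim u_0'y'$ (using the recorded fact from \S\ref{to} that $w\to w'$ with $\ell(w)=\ell(w')$ gives $w\approx w'$ and so $w\sim w'$). The gluing argument gives $u_0y\sim u_0'y'$, and transitivity of $\sim$ delivers $\tw'\sim\tw''$. Thus the crux of the whole proof is the single wall-crossing lemma that passes strong conjugacy between the finite parabolics attached to adjacent chambers of $V_{\tw}$; everything else is an assembly of Propositions \ref{red1} and \ref{ell}, Theorem \ref{par}, and the finite case.
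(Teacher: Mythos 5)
Your reduction steps are sound and do mirror the paper's: Proposition \ref{red1}, Proposition \ref{ell}, and the finite-group theorem (equivalently Theorem \ref{par} for the single parabolic $W_{K,A'}$) correctly produce $\tw'\to u_0y$ with $u_0$ minimal in its twisted class in $W_I$. But the argument collapses exactly at the point you yourself call the crux: the wall-crossing lemma is asserted, not proved, and the mechanism you propose for it --- ``an explicit elementary strong conjugation'' --- is not available in general. If $A$ and $A'$ are the alcoves lying over two adjacent $K$-alcoves (with $K=V_{\tw}$), they are almost never adjacent as alcoves: they differ by a possibly long element of $W$, and any chain of single-wall crossings joining them must pass through alcoves whose closures meet $K$ only in non-regular points, where Proposition \ref{ell} gives no length control whatsoever. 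Nothing in your outline rules out that lengths jump up along such a chain, and strong conjugacy (which by definition preserves length) cannot be manufactured one simple reflection at a time. Since you make this unproved lemma carry both the minimality claim in (1) and the strong conjugacy in (2), this is a genuine gap, not a routine verification.

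You have also mislocated the difficulty: part (1) needs no gluing at all. By Proposition \ref{ell}, for any alcove $B$ whose closure contains a regular point of $K$ one has $\ell(\tw_B)=\sharp\fH_K(B,\tw B)+\<\bar\nu_{\tw},2\rho\>$, and $\sharp\fH_K(B,\tw B)$ depends only on the connected component $C$ of $V-\cup_{H\in\fH_K}H$ containing $B$; call this number $\ell(C)$. The $W_{K,A'}$-partial-conjugation orbit of $\tw_{A'}$ is exactly $\{\tw_{wA'};\, w\in W_K\}$, and since $W_K$ fixes $K$ pointwise and acts transitively on the components $C$, this single orbit already contains elements of length $\ell(C)+\<\bar\nu_{\tw},2\rho\>$ for every $C$. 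As Proposition \ref{red1} shows every element of $\co$ has length at least $\min_C\ell(C)+\<\bar\nu_{\tw},2\rho\>$, the global minimum of $\co$ is attained inside this one orbit, independently of which $K$-alcove the reduction landed on --- so your worry about distinct parabolics is vacuous for (1), and Theorem \ref{par}(1)--(2) finishes it. Where a gluing along $K$ really is needed is (2), and the device the paper uses (and you are missing) is an \emph{enlargement of the parabolic}: for consecutive alcoves $A_i, A_{i+1}$ in the minimizing component whose closures meet $K$ in regular points with $K_i=\bar A_i\cap\bar A_{i+1}\cap K\neq\emptyset$, one applies Theorem \ref{par}(2) to the finite group $W_{K_i}$ generated by reflections in \emph{all} hyperplanes containing $K_i$ (not just those containing $K$). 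Both $\tw_{A_i}$ and $\tw_{A_{i+1}}$ are by then known to lie in $\co_{\min}$, hence are minimal in the $W_{K_i,A_i}$-orbit, and Theorem \ref{par}(2) gives $\tw_{A_i}\sim\tw_{A_{i+1}}$ in one stroke, absorbing all the problematic intermediate alcoves. Replacing your chamber-by-chamber elementary conjugation with this enlarged-parabolic argument is precisely what is needed to close the gap.
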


\begin{proof}
(1) We fix an element $\tw$ of $\co$. Set $K=V_{\tw}$. Then any element in $\co$ is of the form $\tw_{A'}$ for some alcove $A'$. By Proposition \ref{red1}, $\tw_{A'} \to \tw_{A}$ for some alcove $A$ such that $\bar A$ contains a regular point of $K$.

If $C$ is a connected component of $V-\cup_{H \in \fH_K} H$, then $\tw(C)$ is also a connected component of $V-\cup_{H \in \fH_K} H$. We denote by $\ell(C)$ the number of hyperplanes in $\fH_K$ that separate $C$ and $\tw(C)$. By definition, if $C$ is the connected component that contains $A$, then $\ell(C)=\sharp \fH_K(A, \tw A)$. Now by Proposition \ref{ell}, $\ell(\tw_A)=\ell(C)+\<\bar \nu_{\tw}, 2 \rho\>$.

Let $C_0$ be a connected component of $V-\cup_{H \in \fH_K} H$ such that $\ell(C_0)$ is minimal among all the connected components of $V-\cup_{H \in \fH_K} H$. Then $\ell(\tw_{A'}) \ge \ell(\tw_A) \ge \ell(C_0)+\<\bar \nu_{\tw}, 2 \rho\>$. In particular, let $A_0$ be the alcove in $C_0$ such that $A_0=u A$ for some $u \in W_K$, then $\bar A_0$ contains a regular point of $K$ and $\tw_{A_0} \in \co_{\min}$.

We have that $\tw_{A_0}=u' \tw_A (u') \i$ for some $u' \in W_{K, A}$ and $\tw_{A_0}$ is a minimal length element in $\co'=\{x \tw_A x \i; x \in W_{K, A}\} \subset \co$. Hence by Theorem \ref{par}, there exists $\tw'' \in \co'_{\min}$ such that $\tw_{A'} \to \tw_A \to \tw'' \sim \tw_{A_0}$. Since $\tw_{A_0} \in \co_{\min}$, $\tw'' \in \co_{\min}$. Part (1) is proved.

(2) Let $\tw' \in \co_{\min}$. We have showed that there exists an alcove $A_0' \subset C_0$ such that $\bar A'_0$ contains a regular point of $K$ and $\tw' \sim \tw_{A'_0}$. Now it suffices to prove that $\tw_{A_0} \sim \tw_{A'_0}$.

Let $\ca_{C_0}$ be the set of all alcoves in $C_0$ whose closures contain regular points of $K$. Then $\cup_{A \in \ca_{C_0}} \bar A \supset K$. Hence there exists a finite sequence of alcoves $A=A_0,\cdots,A_r=A_0' \in \ca_{C_0}$ such that $\bar A_i \cap \bar A_{i+1} \cap K \neq \emptyset$ for all $0 \le i<r$. We set $K_i=\bar A_i \cap \bar A_{i+1} \cap K$. Then there exists $u_i \in W_{K_i}$ such that $A_{i+1}=u_i A_i$. Hence $\tw_{A_{i+1}}=u'_i \tw_{A_i} (u'_i) \i$ for some $u'_i \in W_{K_i, A_i}$. Notice that $\tw_{A_{i+1}}$ and $\tw_{A_i}$ are minimal length elements in $\{x \tw_{A_i} x \i; x \in W_{K_i, A_i}\}$. Thus by Theorem \ref{par}, $\tw_{A_{i+1}} \sim \tw_{A_i}$. Therefore $\tw_{A_0} \sim \tw_{A'_0}$.
\end{proof}

\

As a consequence, we have a similar result for any conjugacy class of $\tW$, which is a union of $W$-conjugacy classes.

\begin{cor}\label{maincor}
Let $\co$ be a conjugacy class of $\tW$ and $\co_{\min}$ be the set of minimal length elements in $\co$. Then

(1) For each element $\tw' \in \co$, there exists $\tw'' \in \co_{\min}$ such that $\tw' \rightarrow \tw''$.

(2) Let $\tw', \tw'' \in \co_{\min}$, then $\tw' \tilde \sim \tw''$.
\end{cor}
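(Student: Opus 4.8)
The plan is to deduce the corollary from Theorem \ref{main} by analysing how a $\tW$-conjugacy class decomposes into $W$-conjugacy classes. First I would observe that if $\tw, \tw' \in \tW$ are $\tW$-conjugate, say $\tw' = (x\d)\tw(x\d)\i$ with $x \in W$ and $\d \in \Om$, then $\tw'$ is $W$-conjugate to $\d\tw\d\i$. Hence the $\tW$-conjugacy class $\co$ is the union of the $W$-conjugacy classes of the elements $\d\tw\d\i$, $\d \in \Om$. Writing $\co_0$ for the $W$-conjugacy class of a fixed $\tw \in \co$, these are the classes $\d\co_0\d\i$. Since conjugation by $\d \in \Om$ preserves $S$, it preserves the length function, so each $\d\co_0\d\i$ is a $W$-conjugacy class with exactly the same minimal length as $\co_0$. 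Consequently the minimal length in $\co$ equals the common minimal length of these $W$-classes, and $\co_{\min} = \bigcup_{\d \in \Om} \d\,(\co_0)_{\min}\,\d\i$, where each $\d(\co_0)_{\min}\d\i = (\d\co_0\d\i)_{\min}$.

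With this decomposition in hand, part (1) should be immediate. Given $\tw' \in \co$, it lies in one of the constituent $W$-classes, say $\tw' \in \co_1 := \d\co_0\d\i$ for some $\d$. Applying Theorem \ref{main}(1) to the $W$-conjugacy class $\co_1$ produces $\tw'' \in (\co_1)_{\min}$ with $\tw' \to \tw''$; note that the relation $\to$ only uses conjugation by simple reflections in $S \subset W$, so it keeps us inside $\co_1$. Since $(\co_1)_{\min} \subseteq \co_{\min}$ by the length computation above, this $\tw''$ works.

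For part (2), let $\tw', \tw'' \in \co_{\min}$. Then $\tw'$ and $\tw''$ lie in $W$-conjugacy classes $\co'$ and $\co''$ respectively, each of which is $\Om$-conjugate to $\co_0$; I would choose $\d \in \Om$ with $\co'' = \d\co'\d\i$. Then $\d\i\tw''\d$ lies in $(\co')_{\min}$, the same $W$-class as $\tw'$. Applying Theorem \ref{main}(2) to $\co'$ yields $\tw' \sim \d\i\tw''\d$, which is precisely the statement $\tw' \tilde\sim \tw''$ by the definition of $\tilde\sim$ (taking the distinguished basic element to be $\d\i \in \Om$).

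The argument is essentially bookkeeping once Theorem \ref{main} is available; the only point requiring a little care is the claim that all the $W$-conjugacy classes making up $\co$ share the same minimal length, which rests on the length-preserving nature of conjugation by $\Om$ together with the bijection between $\d$-conjugacy classes in $W$ and $W$-classes in $W\d$ recalled in \ref{Cox}. I expect this length-invariance step to be the main (if modest) obstacle, since it is what guarantees that $\co_{\min}$ is genuinely the union of the $(\co_i)_{\min}$ rather than being concentrated in a single constituent class.
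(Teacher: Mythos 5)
Your proposal is correct and is exactly the argument the paper leaves implicit: the paper states Corollary \ref{maincor} as an immediate consequence of Theorem \ref{main}, remarking only that a conjugacy class of $\tW$ is a union of $W$-conjugacy classes. Your bookkeeping --- the decomposition $\co=\bigcup_{\d\in\Om}\d\co_0\d\i$, the observation that $\Om$-conjugation preserves length so all constituent $W$-classes share the same minimal length, and the passage from $\sim$ within one $W$-class to $\tilde\sim$ across $\Om$-conjugate classes --- is precisely the intended (and correct) way to fill in that step.
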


\section{Straight conjugacy class}

\subsection{}  Following \cite{Kr}, we call an element $\tw \in \tW$ a {\it straight element} if for any $m \in \NN$, $\ell(\tw \d(\tw) \cdots \d^{m-1}(\tw))=m \ell(\tw)$. We call a conjugacy class {\it straight} if it contains some straight element. It is easy to see that $\tw$ is straight if and only if $\ell(\tw)=\<\bar \nu_{\tw}, 2 \rho\>$ (see \cite{He2}).

By definition, any basic element of $\tW$ is straight. Also if $\l$ is dominant, then $t^\l$ is also straight. In Proposition \ref{coxeter}, we'll give some nontrivial example of straight elements.

\subsection{} We follow \cite[7.3]{Sp}. Let $\d \in \Om$. For each $\delta$-orbit in $S$, we pick a simple reflection. Let $g$ be the product of these simple reflections (in any order) and put $c=(g, \delta) \in W \rtimes \<\delta\>$. We call $c$  a {\it twisted Coxeter element} of $\tW$. The following result will be used in \cite{HW} in the study of basic locus of Shimura varieties.

\begin{prop}\label{coxeter}
Let $c$ be a twisted Coxeter element of $\tW$. Then $c$ is a straight element.
\end{prop}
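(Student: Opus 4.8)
The plan is to reduce to the length criterion for straightness recalled just above the statement: $c$ is straight if and only if $\ell(c)=\<\bar\nu_c,2\rho\>$. Since $c=g\d$ with $g$ a product of pairwise distinct simple reflections, one from each $\d$-orbit on $S$, the expression for $g$ is reduced (a product of distinct simple reflections is a Coxeter element of the parabolic it generates, hence reduced), so $\ell(c)=\ell(g)=r$, where $r$ is the number of $\d$-orbits on $S$. One half of the desired equality is automatic: applying Proposition \ref{red1} and then Proposition \ref{ell} with $K=V_c$, every element of the conjugacy class of $c$ — in particular $c=c_\D$ itself — has length at least $\<\bar\nu_c,2\rho\>$. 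Thus the entire content lies in the reverse inequality $\ell(c)\le\<\bar\nu_c,2\rho\>$, which will then force equality and hence straightness.

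To get equality in one stroke I would apply Lemma \ref{f1} directly at the fundamental alcove. Take $K=V_c$; by Lemma \ref{plusv} one has $c(K)=K$. Since $\d\in\Om$ fixes $\D$, we get $c\D=g\d\D=g\D$, and the reduced expression $g=s_{i_1}\cdots s_{i_r}$ identifies $\fH(\D,c\D)$ with the $r$ walls crossed along the gallery from $\D$ to $g\D$. If the hypotheses of Lemma \ref{f1} hold for the alcove $A=\D$, its conclusion reads $\ell(c)=\sharp\fH(\D,c\D)=\<\bar\nu_c,2\rho\>$, exactly what is needed. Concretely there are two conditions to verify: (I) $\D$ and $c\D$ lie in the same connected component of $V-\cup_{H\in\fH_{V_c}}H$, i.e. no hyperplane containing $V_c$ separates $\D$ from $c\D$; and (II) $\bar\D$ contains a point $v\in V_c$ such that every $H\in\fH$ passing through both $v$ and $c(v)=v+\nu_c$ already contains all of $V_c$.

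The main obstacle is therefore the geometric fact that the affine subspace $V_c$ — equivalently the minimal displacement set $\{v:\ \|c(v)-v\|=\|\nu_c\|\}$ of the isometry $c$ — meets $\bar\D$ in such a generic point, and that no wall through $V_c$ separates $\D$ from $c\D$. For (II) I would argue by convexity together with the special structure: $f_c(v)=\|c(v)-v\|^2$ is a convex quadratic whose global minimum $\|\nu_c\|^2$ is attained exactly on $V_c$, so it suffices to show that the minimum of $f_c$ over the compact convex set $\bar\D$ is the global one, i.e. that $\gr f_c$ vanishes somewhere in $\bar\D$; here the data $c\D=g\D$ with $g$ the product of one simple reflection from each $\d$-orbit, together with $\d\bar\D=\bar\D$, should pin the gradient flow so that $V_c$ is forced to enter $\bar\D$ (one can try to reduce, via the factorization $c_A=u\,c_{K,A}$ of Proposition \ref{ell}, to a twisted Coxeter element of a finite parabolic, whose fixed point is its barycenter and lies in the fundamental chamber). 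For (I), once a regular point $v\in V_c\cap\bar\D$ is produced, a separating $H\supseteq V_c$ would contain both $v$ and $c(v)$, and I would exclude this by playing it against the reduced gallery $\D\to g\D$, whose crossed walls are governed by the distinct $s_{i_1},\dots,s_{i_r}$. A more combinatorial alternative to the whole argument is to prove directly that the word $g\,\d(g)\cdots\d^{m-1}(g)$ is reduced for every $m$ — the twisted analogue of the fact that powers of a Coxeter element of an infinite Coxeter group are reduced — which amounts to the same disjointness of the galleries $\fH(c^j\D,c^{j+1}\D)$ that Lemma \ref{f1} extracts from a point of $V_c$ lying in $\bar\D$.
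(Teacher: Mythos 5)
There is a genuine gap, and it sits exactly where you place ``the main obstacle'': conditions (I) and (II) --- that $\bar\D$ contains a suitable regular point of $V_c$ and that no hyperplane containing $V_c$ separates $\D$ from $c\D$ --- are never proved, and in your approach they \emph{are} the proposition, not a technical verification. Your convexity argument for (II) is circular: to show that $\gr f_c$ vanishes somewhere on $\bar\D$ you propose to invoke the factorization $c_A=u\,c_{K,A}$ of Proposition \ref{ell}, but that proposition can only be applied once one already has an alcove whose closure contains a regular point of $K=V_c$, which is precisely what you are trying to produce; the remaining phrases (``should pin the gradient flow'', ``one can try to reduce'') are hopes, not arguments. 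Note also that the paper's own example in Section 2 (the element $t^{2\a_1+2\a_2}\d$ in type $\tilde A_2$) exhibits a \emph{minimal length} element $\tw_A$ with $V_{\tw_A}\cap\bar\D=\emptyset$ after normalizing $A$ to $\D$; so (I)--(II) cannot follow from general positioning or minimality considerations and would have to exploit the twisted Coxeter structure in an essential way, which your sketch never does. Finally, the closing ``combinatorial alternative'' --- that $g\,\d(g)\cdots\d^{m-1}(g)$ is reduced for all $m$ --- is a restatement of the theorem to be proved (the twisted analogue of Speyer's result, which the paper explicitly says it does \emph{not} reprove), not a proof of it.

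For comparison, the paper's proof deliberately avoids pinning $V_c$ against the fundamental alcove. It applies Propositions \ref{red1} and \ref{ell} to get $c\to u x$ with $u\in W_I$, $W_I$ finite, and $x$ straight with $x(I)=I$; the key observation is that conjugation by a simple reflection which does not increase length sends a twisted Coxeter element to a twisted Coxeter element, and since all twisted Coxeter elements have the same length $r$, one gets $c\approx ux$ with $ux$ again twisted Coxeter. Then a purely combinatorial argument takes over: writing $x=w\d$ with $w\in W_{S-I}$, one deduces $\d(I)=I$ and that $w$ commutes with every element of $I$, so $I$ is a union of connected components of the affine Dynkin diagram of $S$; since $W_I$ is finite this forces $I=\emptyset$, hence $u=1$ and $c\approx x$ is straight. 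If you want to salvage your outline, this invariance of the twisted Coxeter property under the moves $\to$ is the missing ingredient that replaces (I) and (II) entirely.
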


\begin{rmk}
The case where $\d=1$ (for any Coxeter group of infinite order) was first obtained by Speyer in \cite{Spe}. Our method here is different from loc.cit.
\end{rmk}

\begin{proof}
Assume that $c \in W \d$ for $\d \in \Om$. By Proposition \ref{red1} and \ref{ell}, $c \rightarrow u x$ for some $I \subset S$ with $W_I$ finite, $u \in W_I$, and a straight element $x$ with $x(I)=I$. It is easy to see that $u x$ is also a twisted Coxeter element and $c \approx u x$. In particular, $x=w \d$ for some $w \in W_{S-I}$. For any $s \in I$, $w \d(s) w \i \in I$. Hence $\d(s) \in I$ and commutes with $w$. So $\d(I)=I$ and $\d(S-I)=S-I$. Since $u x$ is a twisted Coxeter element of $\tW$, $x=w \d$ is a twisted Coxeter element of $W_{S-I} \rtimes \<\d\>$. On the other hand, $w$ commutes with any element in $I$. Thus $I$ is a union of connected components of the Dynkin diagram of $S$. Hence $I=\emptyset$ since $W_I$ is finite. So $u=1$ and $c \approx x$ is also a straight element.
\end{proof}

\subsection{}\label{wgo} We'll give some algebraic and geometric criteria for straight conjugacy classes. In order to do this, we first make a short digression and discuss another description of $\tW$.

Let $G$ be a connected complex reductive algebraic group and $T \subset G$ be a maximal torus of $G$. Let $W_0$ be the finite Weyl group of $G$ and $S_0$ the set of simple roots. We denote by $Q$ (resp. $P$) the coroots lattice (resp. coweight lattice) of $T$ in $G$. Then $W_G=Q \rtimes W_0$ is an affine Weyl group in \ref{setup}. Set $\tW_G=P \rtimes W_0$. For the group $\Om'$ of diagram automorphisms of $S_0$ that induces an action on $G$, we set $\tW_{G, \Om'}=\tW_G \rtimes \Om'$. Then $\tW_{G, \Om'}=W_G \rtimes \Om$ for some $\Om \subset \Aut(W_G, S)$ with $\Om(S)=S$. It is easy to check that for any affine Weyl group $W$, $W \rtimes \Aut(W, S)=\tW_{G, \Om'}$, here $G$ is the corresponding semisimple group of adjoint type and $\Om'$ is the group of diagram automorphisms on $S_0$.

For any $J \subset S_0$, set $\Om'_J=\{\d \in \Om'; \d(J)=J\}$ and $$\tW_J=(P \rtimes W_J) \rtimes \Om'_J.$$ We call an element in $\tW_J$ {\it basic} if it is of length $0$ with respect to the length function on $\tW_J$.

In the rest of this section, we assume that $W=W_G$ and $\tW=\tW_{G, \Om'}$ unless otherwise stated.

\begin{prop}\label{cri}
Let $\co$ be a $W_G$-conjugacy class of $\tW$. Then the following conditions are equivalent:

(1) $\co$ is straight;

(2) For some (or equivalently, any) $\tw \in \co$, $V_{\tw} \nsubseteq H$ for any $H \in \fH$;

(3) $\co$ contains a basic element of $\tW_J$ for some $J \subset S_0$.

In this case, there exists a basic element $x$ in $\tW_{J_{\co}}$ and $y \in W_0^{J_\co}$ such that $\nu_x=\nu_\co$ and $y x y\i \in \co_{\min}$. Here $\nu_\co=\bar \nu_{\tw}$ for some (or equivalently, any) $\tw \in \co$ and $J_{\co}=\{i \in S_0; \<\nu_\co, \a_i\>=0\}$.
\end{prop}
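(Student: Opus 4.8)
The plan is to establish the cycle of implications $(2)\Rightarrow(1)$, $(1)\Rightarrow(3)$, $(3)\Rightarrow(2)$, and to read off the ``moreover'' part from the proof of $(1)\Rightarrow(3)$. First I would record that all three conditions, together with $\nu_\co$ and $J_\co$, are intrinsic to $\co$: if $\tw'=x\tw x\i$ then $V_{\tw'}=x V_{\tw}$ and $\nu_{\tw'}=x(\nu_{\tw})$, while $x$ permutes $\fH$; hence the property ``$V_{\tw}\subseteq H$ for some $H\in\fH$'' is independent of the representative, and $\nu_\co=\bar\nu_{\tw}$, $J_\co$ are well defined. This justifies the phrase ``some (or equivalently, any)''.

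For $(2)\Rightarrow(1)$, note that condition (2) says precisely that $\fH_{V_{\tw}}=\emptyset$. Fix $\tw\in\co$ and set $K=V_{\tw}$. By Proposition \ref{red1} there is an alcove $A$ with $\bar A$ containing a regular point of $K$ and $\tw\to\tw_A$. By Proposition \ref{ell}, $\ell(\tw_A)=\sharp\fH_K(A,\tw A)+\<\bar\nu_{\tw},2\rho\>$; since $\fH_K=\emptyset$ the first term vanishes, so $\ell(\tw_A)=\<\nu_\co,2\rho\>$ and $\tw_A$ is straight. Hence $\co$ is straight.

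The heart of the argument is $(1)\Rightarrow(3)$ together with the ``moreover''. Let $\tw\in\co_{\min}$, so $\tw$ is straight with $\ell(\tw)=\<\nu_\co,2\rho\>$; conjugating by $W_0\subset W$ I may assume $\nu_{\tw}=\nu_\co$ is dominant, and by Theorem \ref{main} and Proposition \ref{red1} I may choose $A$ with $\bar A$ containing a regular point $v$ of $V_{\tw}$ and $\tw_A\in\co_{\min}$. Now apply Proposition \ref{ell} to the $\tw$-stable line $K=v+\RR\nu_\co\subseteq V_{\tw}$: it yields a length-additive decomposition $\tw_A=u\,\tw_{K,A}$ with $u\in W_{I(K,A)}$, $\tw_{K,A}\in{}^{I}\tW^{I}$, $\tw_{K,A}(I)=I$ and $\ell(\tw_{K,A})=\<\nu_\co,2\rho\>$, where $I=I(K,A)$. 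Since $\tw_A\in\co_{\min}$ has length $\<\nu_\co,2\rho\>=\ell(\tw_{K,A})$, we must have $\ell(u)=0$, so $u=1$ and $\tw_A=\tw_{K,A}$ normalizes $W_I$ and preserves $I$. By construction $W_I=W_{K,A}$ is generated by the reflections in the roots orthogonal to $\nu_\co$, which, as $\nu_\co$ is dominant, form the standard Levi subsystem with Weyl group $W_{J_\co}$. Conjugating by a suitable element of $W$ carrying this local configuration at $v$ to the standard facet fixed by $W_{J_\co}$, I obtain a length-$0$, i.e. basic, element $x$ of $\tW_{J_\co}$ with $\nu_x=\nu_\co$; this proves (3). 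Finally, since $x\in\co$, a suitable $y\in W_0^{J_\co}$ conjugates $x$ to a minimal length element $yxy\i\in\co_{\min}$, giving the ``moreover''.

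For $(3)\Rightarrow(2)$, let $x$ be a basic element of $\tW_J$. Being of length $0$ in $\tW_J$, $x$ stabilizes the fundamental alcove of the Levi attached to $J$ and so fixes, up to the translation by $\nu_x$, a point $b$ regular for the roots in $\Phi_J$; thus $V_x$ meets the interior of the $J$-chamber. For a root $\a$ with $\<\nu_x,\a\>\neq0$ the function $\<\cdot,\a\>$ is non-constant along the $\nu_x$-direction inside $V_x$, so $V_x\nsubseteq H_{\a,k}$; for the remaining roots one uses the regularity of $b$ to conclude that $V_x$ lies in no hyperplane of $\fH$. Hence $\co$ satisfies (2), and by $(2)\Rightarrow(1)$ it is straight, closing the cycle. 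I expect the main obstacle to be the standardization step in $(1)\Rightarrow(3)$: matching the locally-defined reflection subgroup $W_{I(K,A)}$ with the standard Levi $W_{J_\co}$ determined by the dominant Newton point, and verifying that the resulting element is genuinely basic in $\tW_{J_\co}$ with $\nu_x=\nu_\co$.
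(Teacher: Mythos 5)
Your implication $(2)\Rightarrow(1)$ coincides with the paper's argument, but your proof of $(1)\Rightarrow(3)$ breaks down exactly at the step you flag as the main obstacle, and the claim it rests on is not just unproved but false. You assert that $W_{I(K,A)}=W_{K,A}$ is ``generated by the reflections in the roots orthogonal to $\nu_\co$'' and hence is the standard Levi $W_{J_\co}$. By definition, $W_K$ is generated by the reflections $s_H$ with $K\subset H$; since $v$ is a regular point of $V_{\tw}$, any $H\in\fH$ containing $K=v+\RR\nu_\co$ contains all of $V_{\tw}$, so $W_{K,A}$ is controlled by $\fH_{V_{\tw}}$ --- and for a straight class this set is \emph{empty} (that is precisely what condition (2) says), so $W_{I(K,A)}$ is trivial while $W_{J_\co}$ in general is not. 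The local reflection group at $v$ is not the Levi attached to the dominant Newton point, so there is no ``local configuration'' for a conjugation to carry to ``the standard facet fixed by $W_{J_\co}$,'' and the two statements that actually constitute the implication are never established: (i) that a suitable conjugate $x=y\i\tw y$ (with $y\in W_0^{J_\co}$ and $\nu_x=\nu_\co$ dominant) lies in $\tW_{J_\co}$ at all --- the paper gets this algebraically: writing $x=t^\chi w\d'$, the identity $x t^{n\nu_\co}=t^{n\nu_\co}x$ forces $w\d'(\nu_\co)=\nu_\co$, which by dominance gives $w\in W_{J_\co}$ and $\d'(J_\co)=J_\co$; and (ii) that $x$ is basic \emph{in} $\tW_{J_\co}$, i.e.\ of length zero for the $\Phi_{J_\co}$-hyperplane count, which the paper proves by first normalizing so that $\bar\D$ itself contains a regular point $e$ of $V_{\tw}$, and then showing $x$ fixes the connected component of $V-\cup_k\cup_{\a\in\Phi_{J_\co}}H_{\a,k}$ containing $\D$, using $0<\<e,y\a\><1$ and the fact that translation by $\nu_\co$ crosses no such hyperplane. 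That same normalization makes the ``moreover'' immediate ($yxy\i=\tw\in\co_{\min}$), whereas your last sentence merely asserts that some $y$ conjugates $x$ into $\co_{\min}$. Note also that the paper's proof of this step uses condition (2), which your cyclic scheme $(2)\Rightarrow(1)\Rightarrow(3)\Rightarrow(2)$ does not make available at that point.

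Your $(3)\Rightarrow(2)$ has a second, independent gap. The averaging construction of a point $b\in V_x\cap C_0$ is sound and disposes of the hyperplanes $H_{\a,k}$ with $\a\in\Phi_J$, and the $\nu_x$-direction disposes of roots with $\<\nu_x,\a\>\neq0$; but roots $\a\in\Phi-\Phi_J$ with $\<\nu_x,\a\>=0$ fall into neither case, and regularity of $b$ relative to $\Phi_J$ says nothing about them. Such roots genuinely occur: a basic element of $\tW_J$ can have finite order (e.g.\ in type $A_1\times A_1$ with $J$ one of the two nodes), in which case $\nu_x=0$ and \emph{every} root outside $\Phi_J$ is of this kind. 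The paper closes this case with the vector $\mu$ defined by $\<\mu,\a_i\>=0$ for $i\in J$ and $\<\mu,\a_i\>=1$ for $i\in S_0-J$: since $w\d'(\mu)=\mu$, one has $\RR\mu+V_x=V_x$, hence $\<V_x,\a\>=\RR$ and $V_x\nsubseteq H_{\a,k}$ for all $\a\in\Phi-\Phi_J$. Without this (or a substitute), the containment $V_x\subset H_{\a,k}$ is not excluded for those roots.
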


\begin{proof}
(1) $\Leftrightarrow$ (2). By Proposition \ref{red1} and Proposition \ref{ell}, there is an alcove $A$ such that $\bar A$ contains a regular point of $V_{\tw}$ and $\tw_A \in \co_{\min}$. Moreover $\ell(\tw_A)=\<\bar \nu_{\tw}, 2\rho\>+\sharp\fH_{V_{\tw}}(A,\tw A)$.

If $\fH_{V_{\tw}}=\emptyset$, then $\fH_{V_{\tw}}(A, \tw A)=\emptyset$. Hence $\ell(\tw_A)=\<\bar \nu_{\tw}, 2\rho\>$ and $\tw$ is a straight element.

If $\co$ is straight, then $\sharp\fH_{V_{\tw}}(A, \tw A)=0$, that is, $\tw$ fixes the connected component $C$ of $V-\cup_{H \in \fH_{V_{\tw}}}H$ containing $A$. Choose $v \in C$ and set $y=\frac{1}{n}\sum_{k=0}^{n-1}\tw^k(v)$, where $n \in \NN$ with $\tw^n=t^{n \nu_{\tw}}$. Since $C$ is convex, we have $y \in C \cap V_{\tw}$, which forces $\fH_{V_{\tw}}$ to be empty.

(3) $\Rightarrow$ (2). Denote by $\Phi_J \subset \Phi$ the set of roots spanned by $\a_i$ for $i \in J$. Assume $\tw=t^\chi w \d' \in \co$ is a basic element in $\tW_J$. Then it is a straight element in $\tW_J$. By condition (2) for $\tW_J$, $V_{\tw} \nsubseteq H_{\a, k}$ for any $\a \in \Phi_J$ and $k \in \ZZ$.

Let $\mu \in V$ with $\<\mu, \a_i\>=\begin{cases} 0, & \text{ if } i \in J \\ 1, &\text{ if } i\in S_0-J\end{cases}$. Since $\d'(J)=J$, then $\d'(\mu)=\mu$. Hence $w \d'(\mu)=\mu$ and $\RR \mu + V_{\tw}=V_{\tw}$. Therefore $\<V_{\tw}, \a\>=\RR$ for any $\a \in \Phi-\Phi_J$. Thus $V_{\tw} \nsubseteq H_{\a, k} \in \fH$ with $\a \in \Phi-\Phi_J$ and $k \in \ZZ$.

(1) $\Rightarrow$ (3). By Proposition \ref{ell} and Condition (2)\, there exists $\tw \in \co_{\min}$ such that $\D$ contains a regular point $e$ of $V_{\tw}$. Let $y \in W_0^{J_\co}$ with $\nu_\co=y\i(\nu_{\tw})$. Set $x=y \i \tw y$. Then $\nu_x=\nu_\co$ is dominant.

Assume that $x=t^\chi w \d' \in \co$ with $\chi \in P$, $w \in W_0$ and $\d' \in \Om'$. Let $n \in \NN$ with $x^n=t^{n \nu_{\co}}$. Then $$t^{n \nu_{\co}+\chi} w \d'=t^{n \nu_{\co}} x=x t^{n \nu_{\co}}=t^{w \d'(\nu_{\co})+\chi} w \d'.$$ Thus $\nu_{\co}=w \d'(\nu_{\co})$ is the unique dominant element in $W_0 \cdot \d'(\nu_{\co})$. Hence $\d'(\nu_{\co})=\nu_{\co}$ and $w \nu_{\co}=\nu_{\co}$. Therefore $w \in W_{J_{\co}}$ and $\d'(J_\co)=J_\co$. Hence $x \in \tW_{J_\co}$.

Let $C$ be the connected component of $V-\cup_k \cup_{\a \in \Phi_J} H_{\a, k}$ that contains $\D$. Since $y \in W_0^{J_\co}$, for any $\a \in \Phi_{J_\co}^+$, $y \a \in \Phi^+$ and $0<(y\i(e), \a)=(e, y(\a))<1$. Hence $y \i(e) \in C$. Moreover, $$x y \i(e)=y \i \tw (e)=y \i(e+\nu_{\tw})=y \i(e)+\nu_\co.$$ Since $\<\nu_\co, \a\>=0$ for all $\a \in \Phi_J$, we have $y \i(e)$ and $y \i(e)+\nu_\co$ are contained in the same connected component of $V-\cup_k \cup_{\a \in \Phi_J} H_{\a, k}$. Hence $C \ni y \i(e)$ and $x(C) \ni x y \i(e)$ are the same connected component of $V-\cup_k \cup_{\a \in \Phi_J} H_{\a, k}$. Thus there is no hyperplane of the form $H_{\a, k}$ with $\a \in \Phi_J$ that separates $C$ from $x(C)$. So $x$ is a basic element in $\tW_J$.
\end{proof}

\subsection{}\label{wwcg} The next task of this section is to give a parametrization of straight conjugacy classes. Such parametrization coincides with the set of $\s$-conjugacy classes of $p$-adic groups \cite{H99}.

Let $P^+$ be the set of dominant coweights of $G$ and $$P^+_\QQ=\{\l \in P \otimes_\ZZ \QQ; \<\l, \a\> \ge 0 \text{ for all } \a \in \Phi^+\} \subset V.$$ Then we may identify $P^+_\QQ$ with $(P \otimes_\ZZ \QQ)/W_0$. For any $\l \in P \otimes_\ZZ \QQ$, we denote by $\bar \l$ the unique element in $P^+_\QQ$ that lies in the $W_0$-orbit of $\l$. The group $\Om'$ acts naturally on $P^+_\QQ$ and on $\tW_G/W_G \cong P/Q$. Let $\d' \in \Om'$. The map $x \mapsto (x \d'^{-1} W_G, \bar \nu_x)$ induces a natural map $$f_{\d'}: \tW_G \d' \rightarrow (P/Q)_{\d'} \times P^+_\QQ.$$
Here $\tW_G \d' \in \tW_G \backslash \tW$ is a right $\tW_G$-coset containing $\d'$ and $(P/Q)_{\d'}$ is the $\d'$-coinvariants of $P/Q$. We denote the image by $B(\tW_G, \d')$.

\begin{thm}\label{straight}
The map $f_{\d'}$ induces a bijection between the straight $\tW_G$-conjugacy classes of $\tW_G \d'$ and $B(\tW_G, \d')$.
\end{thm}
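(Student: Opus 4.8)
The plan is to show that $f_{\d'}$ is constant on $W_G$-conjugacy classes, so that it descends to the set of straight classes, and then to verify that the induced map is onto $B(\tW_G,\d')$ and injective; Proposition \ref{cri} will let me replace each straight class by a basic representative at the key steps. For well-definedness, the dominant Newton point $\bar\nu_{\tw}$ is visibly a $W_G$-conjugacy invariant, so only the first coordinate needs checking. Writing $\overline{(\cdot)}\colon\tW_G\to\tW_G/W_G\cong P/Q$ for the projection (a homomorphism to an abelian group on which $W_0$ acts trivially), a direct computation gives $(gxg\i)\d'^{-1}=g\,(x\d'^{-1})\,\d'(g\i)$ for $g\in\tW_G$ and $x\in\tW_G\d'$, hence $\overline{(gxg\i)\d'^{-1}}=\overline{x\d'^{-1}}+(\bar g-\d'(\bar g))$. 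As $\bar g-\d'(\bar g)$ vanishes in the $\d'$-coinvariants, the class $\overline{x\d'^{-1}}\in(P/Q)_{\d'}$ is unchanged, so $f_{\d'}$ descends as claimed, with image contained in $B(\tW_G,\d')$.

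Next I would establish surjectivity onto $B(\tW_G,\d')$. Given any $b=f_{\d'}(z)$ with $z\in\tW_G\d'$, set $K=V_z$ and apply Proposition \ref{red1} and then Proposition \ref{ell} to produce an alcove $A$ with $z_A=u\,z_{K,A}$, where $u\in W_{I(K,A)}$, $\ell(z_{K,A})=\<\bar\nu_z,2\rho\>$, and $z_{K,A}$ has the same Newton point as $z$ (its powers agree with those of $z_A$). The straightness criterion recalled in \S3.1 then shows that $z_{K,A}$ is a straight element. Finally $x_A$ and $u$ lie in $W_G$, hence map to $0$ under $\overline{(\cdot)}$, so $\overline{z_{K,A}\d'^{-1}}=\overline{z_A\d'^{-1}}=\overline{z\d'^{-1}}$ in $(P/Q)_{\d'}$, while $\bar\nu_{z_{K,A}}=\bar\nu_z$. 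Therefore the straight class of $z_{K,A}$ is sent by $f_{\d'}$ to $b$, and surjectivity follows.

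Finally, for injectivity I would take straight classes $\co_1,\co_2$ with $f_{\d'}(\co_1)=f_{\d'}(\co_2)=(\kappa,\nu)$. Equality of Newton points forces $J_{\co_1}=J_{\co_2}=J_\nu=:J$ with $\d'(J)=J$, and by the last assertion of Proposition \ref{cri} each $\co_i$ contains a basic element $x_i$ of $\tW_J$ with $\nu_{x_i}=\nu$ and $\overline{x_i\d'^{-1}}=\kappa$. It then remains to prove that two basic elements of $\tW_J$ sharing the same Newton point $\nu$ and the same image $\kappa\in(P/Q)_{\d'}$ are conjugate in $W_G$, and this is where I expect the real difficulty. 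The mechanism is a comparison of coinvariants: a basic element of the Levi's extended affine Weyl group $\tW_J$ is pinned down up to $\tW_J$-conjugacy by a Kottwitz-type class in $(P/Q_J)_{\d'}$, whose image in $(P/Q_J)_{\d'}\otimes\QQ$ is exactly $\nu$ (this uses that $J=J_\nu$ is cut out precisely by the walls through $\nu$), while the larger group $W_G$ provides the extra conjugations that collapse $(P/Q_J)_{\d'}$ onto $(P/Q)_{\d'}$. The technical heart, and the main obstacle, is to show that $\nu$ together with $\kappa$ recovers this $\tW_J$-invariant modulo exactly the ambiguity killed by $W_G$-conjugation, i.e. that the natural map $(P/Q_J)_{\d'}\to\big((P/Q_J)_{\d'}\otimes\QQ\big)\times(P/Q)_{\d'}$ is injective up to that ambiguity. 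Once this comparison of coinvariants for the Levi and for $G$ is carried out, the three steps assemble into the asserted bijection.
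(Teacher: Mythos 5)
Your first two steps are correct and essentially identical to the paper's. The well-definedness computation $\overline{(gxg\i)\d'^{-1}}=\overline{x\d'^{-1}}+(\bar g-\d'(\bar g))$, which dies in the coinvariants, is a cleaner packaging of the paper's step (a), and your surjectivity argument via Propositions \ref{red1} and \ref{ell} is the paper's argument: the point that suitable powers of $z_A$ and $z_{K,A}$ coincide (so their Newton points agree) is indeed available from the proof of Proposition \ref{ell}, and the paper invokes the proof of \cite[Proposition 2.2]{He2} for exactly this.

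The genuine gap is injectivity, and you acknowledge it yourself: you reduce to showing that two basic elements of $\tW_{J}$ (with $J=J_\nu$) having the same Newton point $\nu$ and the same class $\kappa\in(P/Q)_{\d'}$ are $W_G$-conjugate, and then only sketch a ``mechanism'' whose two ingredients (a classification of basic elements of $\tW_J$ up to conjugacy by a Kottwitz-type invariant in $(P/Q_J)_{\d'}$, and a comparison of Levi versus $G$ coinvariants) are left unproven. Worse, the first ingredient is essentially the theorem itself applied to $\tW_J$ and restricted to basic classes, so invoking it is close to circular. The paper instead closes this step by a direct computation with translation parts: after conjugating so that $\tw=t^\l w\d'$ and $\tw'=t^{\l'}w'\d'$ both lie in $\tW_J$ with the same dominant Newton point and with $\l-\l'\in Q$, it decomposes $P\otimes_\ZZ\RR=Q_J\otimes_\ZZ\RR\oplus Q_{J'}\otimes_\ZZ\RR\oplus N$ (where $J'=S_0-J$), writes $\l=a_J+a_{J'}+c$ and $\l'=a'_J+a'_{J'}+c'$, and uses the Newton point formula $\nu=\frac1n\sum_{k}(w\d')^k(\l)$ (whose $Q_{J'}$-component only sees $(\d')^k(a_{J'})$, since $W_J$ moves vectors only within $Q_J\otimes_\ZZ\RR$) to deduce $\sum_k(\d')^k(a_{J'}-a'_{J'})=0$. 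Because $Q_{J'}$ is a permutation lattice for $\<\d'\>$, any element killed by the norm map is of the form $\th-\d'(\th)$ with $\th\in Q_{J'}$; conjugating $\tw'$ by the translation $t^\th$ then places $\tw$ and $\tw''=t^\th\tw' t^{-\th}$ in the same $(Q_J\rtimes W_J)$-coset of $\tW_J$, so by Proposition \ref{cri} applied inside $\tW_J$ (together with the fact that such a coset contains a unique length-zero element) both are $(Q_J\rtimes W_J)$-conjugate to the same basic element. This translation-adjustment trick, resting on the vanishing of the norm-kernel modulo $(1-\d')Q_{J'}$ for the permutation module $Q_{J'}$, is precisely the ``technical heart'' your proposal identifies but does not supply.
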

\begin{proof}
We first show that

(a) The map $f_{\d'}$ is constant on each $\tW_G$-conjugacy class.

Let $\tw=t^\chi w \d' \in \tW$ and $\tu=t^\l u\in \tW_G$, where $\chi, \l \in P$ and $w, u \in W_0$. Then $\tu \tw \tu \i=t^{\l+u \chi-(u w \d' u \i (\d') \i) \d' \l} (u w \d' u')$. Notice that for any $x \in W_0$ and $\mu \in P$, $x \mu-\mu \in Q$. Hence $t^{\l+u \chi-(u w \d' u \i (\d') \i) \d' \l} \in t^{\l+\chi-\d'(\l)} W_G$ and $$\tu \tw \tu \i \in t^{\l+\chi-\d'(\l)} W_G (u w \d' u' (\d') \i) \d'=t^{\l+u-\d'(\l)} \d' W_G.$$  Hence the images of $\tu \tw \tu\i$ and $\tw$ in $(P/Q)_{\d'}$ are the same. Assume that $n \in \NN$ and $\tw^n=t^{n \nu_{\tw}}$. Then $(\tu \tw \tu \i)^n=\tu t^{n \nu_{\tw}} \tu \i=t^\l t^{u n \nu_{\tw}} t^{-\l}=t^{u n \nu_{\tw}}$. Therefore $\nu_{\tu \tw \tu\i}=u(\nu_{\tw})$ and $\bar \nu_{\tu \tw \tu\i}=\bar \nu_{\tw}$.

(a) is proved.

Moreover, $t^{n \nu_{\tw}}=\tw t^{n \nu_{\tw}} \tw \i=t^{\chi} t^{w n \d'(\nu_{\tw})} t^{-\chi}=t^{w n \d'(\nu_{\tw})}$. Thus $\nu_{\tw}=w \d'(\nu_{\tw})$. Hence

(b) $\bar \nu_{\tw}=\d'(\bar \nu_{\tw})$ for all $\tw \in \tW_G \d'$.

By Proposition \ref{red1} and \ref{ell}, for any $\tw \in \tW$, $\tw \rightarrow u \tw_I$ for some $I \subset S$ with $W_I$ finite, $u \in W_I$, and a straight element $\tw_I$ with $\tw_I(I)=I$. By the proof of \cite[Proposition 2.2]{He2}, $f_{\d'}(\tw)=f_{\d'}(u\tw_I)=f_{\d'}(\tw_I)$. So $f_{\d'}$ is surjective.

Now we prove that $f_{\d'}$ is injective.

Let $\tw, \tw' \in \tW_G \d'$ with $f_{\d'}(\tw)=f_{\d'}(\tw')$. Assume $\tw=t^\l w \d'$ and $\tw'=t^{\l'} w' \d'$ for some $\l, \l' \in X$, $w, w' \in W_G$. Then after conjugating by a suitable element of $\tW_G$, we can assume further that $\tw W_G=\tw' W_G$.

Let $J=\{i \in S_0; \<\bar \nu_{\tw}, \a_i\>=0\}$. By (b), $\bar \nu_{\tw}=\d'(\bar \nu_{\tw})$. Then $\d'(J)=J$. By Proposition \ref{cri}, after conjugation by some elements in $W_G$, we may assume that $\tw, \tw' \in \tW_J$ and $\nu=\nu_{\tw}=\nu_{\tw'} \in P_\QQ^+$.

Let $N=\{v \in P \otimes_\ZZ \RR; (v, \a^\vee)=0 \text{ for all } \a \in \Phi\}$, where $(\, ,)$ is a $\d'$-invariant inner product on $P \otimes_\ZZ \RR$. Let $J'=S_0-J$ and $Q_J$, $Q_{J'}$ be the sublattices of $Q$ spanned by simple roots of $J$ and $J'$ respectively. Then $\d'(N)=N$ and $$P \otimes_\ZZ \RR=Q_J \otimes_\ZZ \RR \oplus Q_{J'} \otimes_\ZZ \RR \oplus N.$$

We may write $\l$ and $\l'$ as $\l=a_J+a_{J'}+c$ and $\l'=a'_J+a'_{J'}+c'$ with $a_J, a'_J \in Q_J \otimes_\ZZ \RR$, $a_{J'}, a'_{J'} \in Q_{J'} \otimes_\ZZ \RR$ and $c, c' \in N$. Since $\l-\l' \in Q$, $a_J-a'_J \in Q_J$, $a_{J'}-a'_{J'} \in Q_{J'}$ and $c=c'$.

Choose $n \in \NN$ such that $(w \d')^n=(w' \d')^n=1$. Then $$\nu=\frac{1}{n}\sum_{k=0}^{n-1}(w\d')^k(\l) \in \frac{1}{n}\sum_{k=0}^{n-1}(\d')^k(a_{J'})+Q_J \otimes_\ZZ \QQ + N.$$ Similarly, $$\nu \in \frac{1}{n}\sum_{k=0}^{n-1}(\d')^k(a'_{J'})+Q_J \otimes_\ZZ \QQ + N.$$ Hence $$\sum_{k=0}^{n-1}(\d')^k(a_{J'}-a'_{J'})=0.$$ Since $a_{J'}-a'_{J'} \in Q_{J'}$, then $a_{J'}-a'_{J'}=\th-\d'(\th)$ for some $\th \in Q_{J'}$. Let $\tw''=t^\th \tw' t^{-\th}$. By Condition (2) of Theorem \ref{cri}, $\tw'$ and $\tw''$ are conjugate to a basic elements in $\tW_J$ by elements in $Q_J \rtimes W_J$. Moreover, $\l \in \l'+\th-\d'(\th)+Q_J$ and $(Q_J \rtimes W_J) \tw=(Q_J \rtimes W_J)\tw'' \in (Q_J \rtimes W_J) \backslash \tW_J$. Thus $\tw$ and $\tw''$ are conjugate to the same basic element of $\tW_J$ by an element in $Q_J \rtimes W_J$ and $\tw$ and $\tw'$ are in the same $\tW_G$-conjugacy class.
\end{proof}

\subsection{} Let $\t \in \Om$. Conjugation by $\t$ gives a permutation on the set of affine simple reflections $S$ of $W$.  We call that $\t$ is {\it superbasic} if each orbit is a union of connected components of the Dynkin diagram of $S$.

In this case, any two vertices in the same connected components of $S$ have the same numbers of edges and thus $S$ is a union of affine Dynkin diagrams
of type $\tilde A$. Hence it is easy to see that $\t \in \Om$ is a superbasic element of $\tW$ if and only if $W=W_1^{m_1} \times \cdots \times W_l^{m_l}$,
where $W_i$ is an affine Weyl group of type $\tilde A_{n_i-1}$ and $\t$ gives an order $n_i m_i$ permutation on $W_i^{m_i}$. %If the Dynkin diagram of $S$ is connected and $W$ is of type $\tilde A_{n-1}$, then $\t \in \Om$ is a superbasic element if and only if $\t$ gives a order $n$ automorphism on $W$.

\subsection{} We follow the notation in $\S$\ref{wwcg}. Let $\d' \in \Om'$. Then any fiber of the map $f_{\d'}: \tW_G \d' \rightarrow (P/Q)_{\d'} \times P^+_\QQ$ is a union of $\tW_G$-conjugacy classes. We call a $\tW_G$-conjugacy class in $\tW_G \d'$ {\it superstraight} if it is a fiber of $f_{\d'}$. By Theorem \ref{straight}, any fiber contains a straight $\tW_G$-conjugacy class. Hence a superstraight conjugacy class is in particular straight. Now we give a description of superstraight $\tW_G$-conjugacy classes which is analogous to Proposition \ref{cri}.

\begin{prop}\label{superstraight}
We keep notations in $\S$\ref{wgo}. Let $\co$ be a $\tW_G$-conjugacy class of $\tW$. Then the following are equivalent:

(1) $\co$ is a superstraight.

(2) For some (or, equivalently any) $\tw \in \co$, $H \cap V_{\tw}=\emptyset$ for any $H \in \fH(\nu_{\tw})$. Here $H \in \fH(\nu_{\tw})=\{H_{\a, k} \in \fH; \<\nu_{\tw}, \a\>=0, k \in \ZZ\}$.

(3) There exists a superbasic element $x$ in $\tW_{J_{\co}}$ and $y \in W_G^{J_\co}$ such that $\nu_x=\nu_\co$ and $y x y\i \in \co_{\min}$.
\end{prop}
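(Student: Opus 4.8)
The plan is to prove this by establishing the cycle $(1) \Rightarrow (2) \Rightarrow (3) \Rightarrow (1)$, modeling the argument closely on the proof of the analogous Proposition \ref{cri} but tracking the finer ``superbasic'' structure throughout. The key geometric heuristic is that superstraightness should correspond to $V_{\tw}$ meeting \emph{no} hyperplane from the degenerate family $\fH(\nu_{\tw})$ (those hyperplanes $H_{\a,k}$ with $\<\nu_{\tw},\a\>=0$), just as ordinary straightness corresponds, via Proposition \ref{cri}(2), to $V_{\tw}$ not being contained in any single hyperplane. Since each fiber of $f_{\d'}$ already contains a straight class by Theorem \ref{straight}, the whole content lies in the gap between ``straight'' and ``superstraight,'' which is governed precisely by the reflections $s_H$ with $H \in \fH(\nu_{\tw})$; these generate the finite parabolic $W_{J_\co}$ acting transversally to the Newton direction.

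For $(1) \Rightarrow (2)$: A superstraight class is a full fiber of $f_{\d'}$, so in particular every element of $\tW_G$ that shares the same image $(x\d'^{-1}W_G,\bar\nu_x)$ is conjugate to $\tw$. I would argue that if some $H \in \fH(\nu_{\tw})$ met $V_{\tw}$, then reflecting in $s_H$ would produce an element with the same $f_{\d'}$-image (the Newton point is unchanged because $H \in \fH(\nu_{\tw})$ fixes the direction $\nu_{\tw}$, and the class in $(P/Q)_{\d'}$ is a conjugation invariant by part (a) of Theorem \ref{straight}'s proof) but not conjugate to $\tw$ by the rigidity established in the injectivity argument of Theorem \ref{straight}; concretely, the $Q_{J'}$-component would differ by something not of the form $\th-\d'(\th)$. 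This contradicts the fiber being a single conjugacy class. For $(2) \Rightarrow (3)$, I would run the construction from $(1)\Rightarrow(3)$ of Proposition \ref{cri} verbatim to produce a basic element $x$ in $\tW_{J_\co}$ with $\nu_x=\nu_\co$ and $y \in W_G^{J_\co}$, $yxy^{-1}\in\co_{\min}$; the extra hypothesis that $H\cap V_{\tw}=\emptyset$ for all $H\in\fH(\nu_{\tw})$ should upgrade ``basic in $\tW_{J_\co}$'' to ``superbasic in $\tW_{J_\co}$,'' since $V_x \subset V_{J_\co}$ avoiding all the $\Phi_{J_\co}$-hyperplanes forces the $\d'$-twisted action of $x$ on the simple reflections of $J_\co$ to permute whole connected components (this is exactly the combinatorial translation of the superbasic condition described in the two subsections preceding the proposition).

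For $(3) \Rightarrow (1)$: given a superbasic $x$ in $\tW_{J_\co}$, I would show its fiber under $f_{\d'}$ is a single conjugacy class by reducing to the type-$\tilde A$ product decomposition $W=W_1^{m_1}\times\cdots\times W_l^{m_l}$ from the superbasic classification, where a direct computation shows the fiber is conjugation-homogeneous. The main obstacle I anticipate is precisely the $(3)\Rightarrow(1)$ direction: proving that the superbasic hypothesis makes the \emph{entire} fiber one conjugacy class, rather than merely containing a straight class. This requires reversing the injectivity estimate of Theorem \ref{straight}: there the residual freedom in the $Q_{J'}$-coordinate was killed modulo $\th-\d'(\th)$, and I must show that under superbasicity every lattice element $a_{J'}-a'_{J'}\in Q_{J'}$ with $\sum_k(\d')^k(a_{J'}-a'_{J'})=0$ actually \emph{is} of that coboundary form, so no genuine distinct classes survive in the fiber. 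I expect this to follow from the fact that on a type-$\tilde A$ factor the $\d'$-coinvariants of the coroot lattice vanish in the relevant degree, making the first cohomology obstruction trivial, but verifying this cleanly across all factors with their attached multiplicities $m_i$ is the delicate point.
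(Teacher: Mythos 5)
Your cycle $(1)\Rightarrow(2)\Rightarrow(3)\Rightarrow(1)$ runs opposite to the paper's (which proves $(1)\Rightarrow(3)$, $(3)\Rightarrow(2)$, $(2)\Rightarrow(1)$), and the fatal gap is in your $(3)\Rightarrow(1)$ step: you have misdiagnosed where the content of superstraightness lies. By Theorem \ref{straight}, \emph{every} fiber of $f_{\d'}$ contains exactly one straight class, superbasic or not; so what $(3)\Rightarrow(1)$ must rule out is the presence of additional \emph{non-straight} classes in the fiber, and your plan never engages with such elements. The lattice point you single out as the delicate one (that any $a\in Q_{J'}$ with $\sum_k(\d')^k(a)=0$ is of the form $\th-\d'(\th)$) is already used unconditionally in the proof of Theorem \ref{straight} --- it holds because $Q_{J'}$ is a permutation module for $\<\d'\>$, with no superbasic hypothesis --- and it only bears on comparing straight elements; it cannot exclude a non-straight class from the fiber. (Indeed, if that statement were the crux, every straight class would be superstraight, which is false.) The paper's mechanism is entirely different: by Propositions \ref{red1} and \ref{ell}, any class $\co'$ in the fiber contains an element $u x$ with $x$ straight, $f_{\d'}(u x)=f_{\d'}(x)$ and $u\in W_{V_{u x}}$, the finite reflection group generated by reflections in hyperplanes containing $V_{u x}$; one checks $V_{u x}\subset V_x$ and that any $H\in\fH_{V_{u x}}$ lies in $\fH(\nu_x)$ and meets $V_x$, so condition (2) forces $\fH_{V_{u x}}=\emptyset$, hence $u=1$ and $\co'=\co$. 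Your proposed substitute --- ``a direct computation in type $\tilde A$ shows the fiber is conjugation-homogeneous'' --- is exactly the assertion to be proved, stated without proof, and it would still have to account for elements of the fiber that are nowhere near the superbasic representative.

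There is also a genuine flaw in your $(1)\Rightarrow(2)$: to show the element obtained by ``reflecting'' (i.e.\ $s_H\tw$) is not conjugate to $\tw$, you appeal to the injectivity argument of Theorem \ref{straight}. But that argument produces \emph{conjugacy} of two straight classes with equal invariants; it can never certify non-conjugacy, and $s_H\tw$ need not be straight, so it does not apply at all. The correct tool --- the one the paper uses in its $(1)\Rightarrow(3)$ --- is length parity: $s_H\in W_G$ is a reflection, so $\ell(s_H\tw)\equiv\ell(\tw)+1 \pmod 2$, and since $w\mapsto(-1)^{\ell(w)}$ is a homomorphism on $\tW$, parity is constant on conjugacy classes; combined with $f_{\d'}(s_H\tw)=f_{\d'}(\tw)$ (an argument as in the proof of \cite[Proposition 2.2]{He2}, using $H\cap V_{\tw}\neq\emptyset$ and $\<\nu_{\tw},\a\>=0$), this puts two distinct classes in one fiber. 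Finally, your $(2)\Rightarrow(3)$ is believable but only asserted; it can be repaired, e.g.\ by averaging a point $p\in V_x$ over the finite group $W_O$ attached to a bad $x$-orbit $O$ of affine simple reflections: since the linear parts of elements of $W_O$ fix $\nu_\co$, the average is a $W_O$-fixed point of $V_x$, hence a point of $V_x$ on a hyperplane of $\fH(\nu_\co)$, contradicting (2). As submitted, however, the proposal does not constitute a proof.
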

\begin{proof}
We assume that $\co \subset \tW_G \d'$ for $\d' \in \Om'$.

(1)$\Rightarrow$(3). By Proposition \ref{cri}, there exists a basic element $x$ in $\tW_{J_{\co}}$ and $y \in W_G^{J_\co}$ such that $\nu_x=\nu_\co$ and $y x y\i \in \co_{\min}$. Assume that $x$ is not superbasic in $\tW_{J_{\co}}$. Then there exists an $x$-orbit $O$ such that $C \cap O \subsetneq C$ for each connected component $C$ of the Dynkin diagram $\tW_{J_\co}$.

Note that $C \cap O \subsetneq C$ is the Dynkin diagram of a finite Weyl group. Hence $W_O$ is a finite product of Weyl groups corresponding to $C \cap O$ and hence is finite. By the proof of \cite[Proposition 2.2]{He2}, $f_{\d'}(w x)=f_{\d'} (x)$ for all $w \in W_O$. In particular, $s_j x$ and $x$ are in the same fiber of $f_{\d'}$ for any $j \in O$.

However, $\ell(s_j x) \equiv \ell(x)+1 \mod 2$. Thus $s_j x$ and $x$ are not in the same conjugacy class. So $\co$ is not superstraight.

%Assume $W_{M_\co}=\prod W_{M^i_\co}$ with each $M^i_\co$ some quasi-simple standard Levi of $G$. Let $S_i$ be the set of affine simple reflections of $W_{M^i_\co}^a$.

%(1)$\Rightarrow$(2). Assume otherwise, then there exists $H \in \fH(\nu_{\tw})$ such that $H \cap V_{\tw} \neq \emptyset$. Let $e \in H \cap V_{\tw}$. Then $\tw s_H(e)=e+\nu_{\tw} \in H \cap V_{\tw}$. Hence $f_{\d'}(\tw s_H)=f_{\d'}(\tw)$. On the other hand, $\tw s_H$ and $\tw$ are not conjugate with each other for exactly one of them preserves an orientation of $V$.

(2)$\Rightarrow$(1).
Let $\co'$ be a $\tW_G$-conjugacy class such that $\co'$ and $\co$ are in the same fiber of $f_{\d'}$. By Proposition \ref{red1} and Proposition \ref{ell}, $\co'$ contains an element of the form $u x$, where $x$ is straight, $f_{\d'}(u x)=f_{\d'}(x)$ and $u \in W_{V_{u x}}$. By Theorem \ref{straight}, $x \in \co$. By the proof of \cite[Proposition 2.2]{He2}, $\nu_{u x}=\nu_x$.

Let $v \in V_{u x}$. By Lemma \ref{plusv}, $u x(v)=v+\nu_{u x}=v+\nu_x \in V_{u x}$. Since $u \in W_{V_{ux}}$, $x(v)=u \i u x(v)=u x(v)=v+\nu_x$ and $v \in V_x$. Thus $V_{u x} \subset V_x$.

Let $H=H_{\a, k} \in \fH_{V_{u x}}$. Then for any $v \in V_{u x}$, $\<v, \a\>=k$. In particular $\<v+\nu_{x}, \a\>=\<v, \a\>+\<\nu_{x}, \a\>=k$ and $\<\nu_{x}, \a\>=0$. However, by our assumption $H \cap V_{x}=\emptyset$. Thus $H \cap V_{u x}=\emptyset$ and $\fH_{V_{u x}}=\emptyset$. Hence $W_{V_{u x}}=\{1\}$ and $u=1$. So $\co'=\co$.

(3)$\Rightarrow$(2). Let $C$ be the unique connected component of $V-\cup_{H \in \fH(\nu_\co)}H$ containing $\D$. We call $H \in \fH(\nu_\co)$ a wall of $C$ if $H \cap \bar C$ spans $H$. Let $\fH(C)$ be the set of all walls of $C$. Since $x$ is superbasic, $x C=C$ and $x$ acts transitively on $\fH(C)$. Since $C$ is convex, $V_x \cap C\neq \emptyset$.

Suppose that $V_x \cap H' \neq \emptyset$ for some $H' \in \fH(\nu_\co)$. Let $p \in V_x \cap H'$ and $q \in V_x \cap C$. Then the affine line $L(p, q) \subset V_x$ intersects with $\partial \bar C \subset \cup_{H \in \fH(C)}H$. Let $v \in L(p, q) \cap \partial \bar C$. Then $v \in H_0$ for some $H_0 \in \fH(C)$. Thus $x^m(v)=v+m\nu_\co \in x^m(H_0)$ for $m \in \ZZ$. Notice that $\RR \nu_\co+H=H$ for $H \in \fH(\nu_\co)$. Thus $v \in x^m(H_0)$ for all $m \in \ZZ$. As $x$ acts transitively on $\fH(C)$, we have $v \in \cap_{H \in \fH(C)} H$. However, $\cap_{H \in \fH(C)} H=\emptyset$. That is a contradiction.
\end{proof}

\subsection{} In the rest of this section, we'll show that any two straight elements in the same conjugacy class are conjugate by cyclic shift, which is analogous to $\S$0.1 (3) for an elliptic conjugacy class of a finite Coxeter group.

In order to do this, we use the following length formula. The proof is similar to \cite[Proposition 2.3]{HN} and is omitted here.

%For any affine subspace $K$ of $V$, two alcoves $A$ and $A'$ are called {\it strongly connected} with respect to $K$ if $\bar A \cap \bar A' \cap K$ spans a codimension $1$ subspace of $K$ of the form $H \cap K$ for some $H \in \fH-\fH_K$. .

\begin{prop}\label{con}
Let $\tw \in \tW$ and $K \subset V_{\tw}$ be an affine subspace with $\tw(K)=K$. Let $A$ and $A'$ be two alcoves in the same connected component of $V-\cup_{H \in \fH_K}H$. Assume that $\bar A \cap \bar A' \cap K$ spans a codimension $1$ subspace of $K$ of the form $H_0 \cap K$ for some $H_0 \in \fH$ and $\tw(H_0 \cap K) \neq H_0 \cap K$. Then $$\ell(\tw_A)=\ell(\tw_{A'})=\<\bar \nu_{\tw}, 2\rho\>+\sharp\fH_K(A, \tw A).$$
\end{prop}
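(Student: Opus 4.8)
The plan is to compute $\ell(\tw_A)$ and $\ell(\tw_{A'})$ by separating off the hyperplanes that contain $K$. Write $H_0=H_{\a,k}$ and $\fH'=\fH\setminus\fH_K$. Since $K\subset V_{\tw}$ and $\tw(K)=K$, Lemma \ref{plusv} shows that $\tw$ acts on $K$ as the translation $x\mapsto x+\nu_{\tw}$, so $\tw(H_0\cap K)=H_{\a,\,k+\<\nu_{\tw},\a\>}\cap K$; thus the hypothesis $\tw(H_0\cap K)\neq H_0\cap K$ is equivalent to $\<\nu_{\tw},\a\>\neq0$, and in particular $\nu_{\tw}\neq0$ and $H_0\notin\fH_K$. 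Now $A'=s_{H_0}A$ and $\tw A'=s_{\tw H_0}\tw A$ with $H_0,\tw H_0\notin\fH_K$, so each of the pairs $A,A'$ and $\tw A,\tw A'$ lies in a single connected component of $V-\cup_{H\in\fH_K}H$; hence $\sharp\fH_K(A,\tw A)=\sharp\fH_K(A',\tw A')$. Because $\ell(\tw_A)=\sharp\fH(A,\tw A)=\sharp\fH_K(A,\tw A)+\sharp\fH'(A,\tw A)$, the whole statement reduces to proving $\sharp\fH'(A,\tw A)=\sharp\fH'(A',\tw A')=\<\bar\nu_{\tw},2\rho\>$.

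For this I would run the telescoping argument in the proof of Lemma \ref{f1}, but restricted to the hyperplanes in $\fH'$. Pick $v\in\bar A\cap\bar A'\cap K$ which is a regular point of $H_0\cap K$, i.e. $v\in H\in\fH$ forces $H_0\cap K\subset H$; such $v$ exist because $\bar A\cap\bar A'\cap K$ spans $H_0\cap K$. Put $v_i=\tw^i(v)=v+i\nu_{\tw}\in K$; these are distinct points on the line $L=v+\RR\nu_{\tw}$. The crucial point, and the one place where the hypotheses really enter, is that $v$ satisfies the condition of Lemma \ref{f1}: if $v,\tw(v)\in H=H_{\b,l}$ with $K\not\subset H$, then $\<\nu_{\tw},\b\>=0$ while regularity gives $H\supseteq H_0\cap K$, so the direction space of $H$ contains that of $H_0\cap K$ together with $\nu_{\tw}$, hence all of $\vec K$, forcing $K\subset H$ --- a contradiction. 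The same reasoning shows that every $H\in\fH'$ either meets $L$ in a single point or misses it, and that no $H\in\fH'$ contains two consecutive $v_i$. With these two facts the disjointness and covering statements (a) and (b) of Lemma \ref{f1} hold verbatim for $\fH'$, and re-establishing them in the present situation, where $A$ may touch $K$ only along the wall $H_0$, is the main obstacle.

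Granting this, each $\tw^i$ is an isometry preserving $\fH$ and, since $\tw(K)=K$, also preserving $\fH_K$ and $\fH'$; hence $\sharp\fH'(\tw^iA,\tw^{i+1}A)=\sharp\fH'(A,\tw A)$ and, by (a) and (b), $\sharp\fH'(A,\tw^NA)=N\,\sharp\fH'(A,\tw A)$ for all $N\ge1$. Taking $N$ a multiple of the integer $n_0$ with $\tw^{n_0}=t^{n_0\nu_{\tw}}$ gives $\sharp\fH'(A,\tw A)=\tfrac1N\sharp\fH'(A,t^{N\nu_{\tw}}A)$. The members of $\fH_K$ are exactly the $H_{\b,*}$ with $\<\nu_{\tw},\b\>=0$, so they do not separate $A$ from $t^{N\nu_{\tw}}A$, and counting the remaining separating hyperplanes root by root yields $\sharp\fH'(A,t^{N\nu_{\tw}}A)=\sum_{\b\in\Phi^+}|\<N\nu_{\tw},\b\>|+O(1)=N\<\bar\nu_{\tw},2\rho\>+O(1)$, using $\sum_{\b\in\Phi^+}|\<\nu_{\tw},\b\>|=\<\bar\nu_{\tw},2\rho\>$. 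Letting $N\to\infty$ forces the constant left-hand side to equal $\<\bar\nu_{\tw},2\rho\>$. Repeating the argument with $v\in\bar A'$ gives $\sharp\fH'(A',\tw A')=\<\bar\nu_{\tw},2\rho\>$ as well, which together with the first paragraph completes the proof.
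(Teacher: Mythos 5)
The paper never prints a proof of this proposition---it only remarks that the argument is similar to \cite[Proposition 2.3]{HN}---so there is nothing to compare line by line; judged on its own, your proposal is correct in substance and is surely the intended argument: run the telescoping proof of Lemma \ref{f1} on the hyperplanes in $\fH'=\fH-\fH_K$ only. Your decisive observation is the right one: a regular point $v$ of the convex set $\bar A\cap\bar A'\cap K$ satisfies the key hypothesis of Lemma \ref{f1}, because $\tw(H_0\cap K)\neq H_0\cap K$ forces $\<\nu_{\tw},\a\>\neq 0$, so the direction space of $H_0\cap K$ together with $\RR\nu_{\tw}$ is the full direction space of $K$. Granting this, no $H\in\fH'$ can contain two of the points $v_i=\tw^i(v)$ (conjugating by $\tw^{-i}$ and using $\tw\fH_K=\fH_K$), and statements (a) and (b) from the proof of Lemma \ref{f1} do go through for $\fH'$ essentially word for word, since their proofs use nothing beyond $v_i\in\overline{\tw^iA}$ and these two facts; the identity $\sharp\fH'(A,\tw A)=\sharp\fH'(A',\tw A')=\<\bar\nu_{\tw},2\rho\>$ and hence the proposition follow.

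Two incidental claims in your write-up are false as stated, though both are harmless. First, $A'=s_{H_0}A$ is unjustified and generally fails: when $K$ has positive codimension in $V$, the set $H_0\cap K$ has codimension at least $2$ in $V$ (it may be a single point), and many alcoves $A'$ besides $s_{H_0}A$ satisfy the hypothesis. What you actually use---that no $H\in\fH_K$ separates $A$ from $A'$, nor $\tw A$ from $\tw A'$---is exactly the stated hypothesis that $A,A'$ lie in one connected component of $V-\cup_{H\in\fH_K}H$, combined with $\tw\fH_K=\fH_K$; no reflection identity is needed. Second, $\fH_K$ is not ``exactly'' $\{H_{\b,l}\in\fH;\ \<\nu_{\tw},\b\>=0\}$: a hyperplane can be parallel to $K$ without containing it, so only the inclusion $\fH_K\subset\{H_{\b,l};\ \<\nu_{\tw},\b\>=0\}$ holds---but that is the only direction your counting needs. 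Finally, the limit $N\to\infty$ is avoidable: writing $x_A=t^\chi u$ with $u\in W_0$, one has $\sharp\fH'(A,t^{N\nu_{\tw}}A)=\sharp\fH(A,t^{N\nu_{\tw}}A)=\ell\bigl(t^{Nu\i\nu_{\tw}}\bigr)=N\<\bar\nu_{\tw},2\rho\>$ exactly, by the Iwahori--Matsumoto formula, which is how the proof of Lemma \ref{f1} itself concludes.
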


\subsection{} Let $K \subset V$ be an affine subspace. We call an affine subspace  of $K$ of the form $H \cap K$ with $H \in \fH-\fH_K$ a {\it $K$-hyperplane} and a connected component of $K-\cup_{H \in \fH-\fH_K}(H \cap K)$ a {\it $K$-alcove}.

Let $A_K$ be a $K$-alcove and $H \in \fH-\fH_K$. If the set of inner points $H_{A_K}=(H \cap \bar A_K)^0 \subset H \cap \bar A_K$ spans $H \cap K$, then we call $H \cap K$ a {\it wall} of $A_K$ and $H_{A_K}$ a {\it face} of $A_K$.

\begin{lem}\label{K-alcove}
Let $K \subset V$ be an affine subspace. Let $A_K$ be a $K$-alcove and $C$ be a connected component of $V-\cup_{H \in \fH_K}H$. Then there exists a unique alcove $A$ in $C$ such that $A_K \subset \bar A$. Moreover, $\bar A_K=\bar A \cap K$.
\end{lem}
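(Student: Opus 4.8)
The plan is to analyze the local geometry of $K$ inside $\bar C$. First I would record the position of $K$ relative to $C$: since every $H \in \fH_K$ contains $K$, the subspace $K$ lies on the bounding walls of $C$, so that $K \subset \bar C$ while $K \cap C = \emptyset$. Next, fix a point $x$ of the $K$-alcove $A_K$. By the definition of a $K$-alcove, $x$ lies on no hyperplane of $\fH - \fH_K$, so the only members of $\fH$ passing through $x$ are those of $\fH_K$. Using local finiteness of $\fH$, I would choose an open ball $B$ about $x$ meeting no hyperplane of $\fH - \fH_K$; then $B \cap C$ misses every hyperplane of $\fH$ and is connected, hence lies in a single alcove, which I call $A_x$, and $A_x \subset C$. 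Moreover $x \in \overline{B \cap C} \subset \bar A_x$, and in fact every point $k \in K \cap B$ satisfies $k \in \bar C \cap B$, so points of $C$ near $k$ lie in $B \cap C \subset A_x$; thus the whole $K$-neighborhood $K \cap B$ of $x$ lies in $\bar A_x$.

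The key step is to upgrade this local alcove to one attached to the entire $K$-alcove. I would show that the assignment $x \mapsto A_x$ is well defined (each $x \in A_K$ determines a \emph{unique} alcove of $C$ whose closure contains it: any such $A'$ has $A' \cap B$ a nonempty open subset of $C \cap B \subset A_x$, forcing $A' = A_x$) and locally constant, since for $x'$ in the $K$-neighborhood $K \cap B$ the same ball exhibits $A_{x'} = A_x$. As a $K$-alcove is an intersection of half-spaces of $K$, it is convex, hence connected, so this function is constant; call its value $A$. Then $x \in \bar A$ for all $x \in A_K$, i.e. $A_K \subset \bar A$, giving existence, and uniqueness follows at once: any alcove $A'$ of $C$ with $A_K \subset \bar{A'}$ has a fixed $x \in A_K$ in its closure, and the unique alcove of $C$ containing $x$ in its closure is $A$.

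It remains to identify $\bar A \cap K$ with $\bar A_K$, and this is where I expect the only real care is needed, working with the interior relative to $K$. Writing $F = \bar A \cap K$, a closed convex subset of $K$, the inclusion $A_K \subset \bar A$ gives $A_K \subset F$; since $A_K$ is open in $K$ this yields $A_K \subset \text{int}_K F$ and shows $F$ is full-dimensional in $K$. For the reverse inclusion, take $z \in \text{int}_K F$; if $z$ lay on some $H \cap K$ with $H \in \fH - \fH_K$, then as $A$ is an alcove, $\bar A$ sits in one closed half-space of $H$, so a full $K$-neighborhood of $z$, which straddles the $K$-hyperplane $H \cap K$, could not lie inside $F \subset \bar A$, a contradiction. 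Hence $\text{int}_K F$ meets no $K$-hyperplane; being connected and meeting $A_K$, it lies in the single $K$-alcove $A_K$, so $\text{int}_K F = A_K$. Finally, as $F$ is closed and convex with nonempty interior in $K$, I get $F = \overline{\text{int}_K F} = \bar A_K$, which is the asserted equality $\bar A \cap K = \bar A_K$. The main obstacle throughout is the bookkeeping of the two roles of $K$ — lying on the walls $\fH_K$ of $C$ yet being cut transversally by $\fH - \fH_K$ — and ensuring the local alcove $A_x$ does not jump as $x$ ranges over $A_K$; convexity of $A_K$ and local finiteness of $\fH$ are exactly what make both points go through.
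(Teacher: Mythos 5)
Your proof is correct, and while it rests on the same two pillars as the paper's (convexity and local finiteness of $\fH$), it takes a genuinely different route in both halves. For existence and uniqueness, both arguments begin identically: every point of $A_K$ is a regular point of $K$, so the only hyperplanes of $\fH$ near such a point lie in $\fH_K$, whence there is a unique alcove of $C$ whose closure contains it. But you globalize by showing $x \mapsto A_x$ is locally constant and invoking connectedness of $A_K$, whereas the paper compares the alcoves attached to two points $e, e' \in A_K$ directly: convexity of $A_K$ puts $e$ and $e'$, hence their alcoves, on the same side of every $H \in \fH - \fH_K$; lying in $C$ puts them on the same side of every $H \in \fH_K$; and two alcoves not separated by any hyperplane coincide. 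For the equality $\bar A \cap K = \bar A_K$, the two proofs diverge more sharply: the paper writes $A = \cap_{H \in \fH} C_H$ and $A_K = \cap_{H \in \fH - \fH_K} C_{H \cap K}$ as intersections of half-spaces, identifies $\bar C_H \cap K$ with the corresponding closed half-space of $K$ (or with $K$ itself when $H \in \fH_K$), and passes to closures of these intersections; you instead prove $\mathrm{int}_K(\bar A \cap K) = A_K$ and invoke the fact that a closed convex set with nonempty interior equals the closure of its interior. Your version is softer, needing only the half-space containment $\bar A \subset \bar C_H$ and standard convexity, while the paper's is a more explicit computation (at the cost of silently using that the closure of an intersection of open convex sets with a common point is the intersection of the closures). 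One step you should justify rather than assert: the opening claim $K \subset \bar C$, $K \cap C = \emptyset$ is not purely formal; it follows from the segment argument --- for $c \in C$ and $k \in K$ the half-open segment $[c,k)$ stays in $C$, since $C$ is the intersection over $H \in \fH_K$ of the open half-spaces containing it and each such $H$ contains $k$ --- which is the same elementary convexity input the paper also uses without comment.
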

\begin{proof}
Choose $e \in A_K$. Then $e$ is a regular point of $K$. Hence there is a unique alcove $A$ in $C$ such that $e \in \bar A$. We show that

(a) $A_K \subset \bar A$.

Let $e' \in A_K$ and $A'$ be the unique alcove in $C$ with $e' \in \bar {A'}$. Since $e, e' \in A_K$ and $A_K$ is convex, we see $e$ and $e'$ are in the same connected component of $V-H$ for any $H \in \fH-\fH_K$. Hence $A$ and $A'$ are in the same connected component of $V-\cup_{H \in \fH-\fH_K}H$. Since $A$ and $A'$ are in the same connected component $C$ of $V-\cup_{H \in \fH_K}H$, they are in the same connected component of $V-\cup_{H \in \fH}H$. Hence $A=A'$.

(a) is proved.

For $H \in \fH$, let $C_H$ be the connected component of $V-H$ that contains $A$. For $H \in \fH-\fH_K$, let $C_{H \cap K}$ be the connected component of $K-H \cap K$ that contains $A_K$. Then $\bar C_H \cap K=\begin{cases} C_{H \cap K}, & \text{ if } H \in \fH-\fH_K \\ K, & \text{ if } H \in \fH_K \end{cases}$. Note that $$A=\cap_{H \in \fH} C_H, \qquad A_K=\cap_{H \in \fH-\fH_K} C_{H \cap K}.$$ Then $$\bar A_K=\cap_{H \in \fH-\fH_K} \bar C_{H \cap K}=\cap_{H \in \fH} (\bar C_H \cap K)=\bar A \cap K.$$
\end{proof}

\begin{lem}\label{sequence}
Let $K \subset V$ be an affine subspace. Let $A$ and $A'$ be two distinct alcoves in the same connected component $C$ of $V-\cup_{H \in \fH_K}H$ and $\bar A$, $\bar A'$ contain regular points of $K$. Then there is a sequence of alcoves $A=A_0, A_1, \cdots A_r=A'$ in $C$ such that $\bar A_i$ contains a regular point of $K$ and $\bar A_{i-1} \cap \bar A_i \cap K$ spans a codimension one affine subspace of $K$ for $1 \le i \le r$.
\end{lem}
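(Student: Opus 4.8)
The plan is to translate the problem from alcoves in $C$ into $K$-alcoves inside the affine subspace $K$, connect those by a gallery within $K$, and then lift back via Lemma \ref{K-alcove}.

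First I would attach to each of $A$ and $A'$ a $K$-alcove. Since $\bar A$ contains a regular point $v$ of $K$, and the regular points of $K$ are precisely the points of $K$ lying off every $K$-hyperplane $H \cap K$ (with $H \in \fH - \fH_K$), the point $v$ lies in a unique $K$-alcove $A_K$. Applying Lemma \ref{K-alcove} to $A_K$ and $C$, together with the uniqueness of the alcove in $C$ whose closure contains $v$, identifies $A$ as exactly the alcove associated to $A_K$, so that $\bar A_K = \bar A \cap K$. Likewise $A'$ determines a $K$-alcove $A'_K$ with $\bar A'_K = \bar A' \cap K$. Because $A \neq A'$ and distinct $K$-alcoves lift to distinct alcoves of $C$ by the uniqueness in Lemma \ref{K-alcove}, we get $A_K \neq A'_K$.

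Next I would connect $A_K$ to $A'_K$ by a gallery in $K$. The family $\{H \cap K; H \in \fH - \fH_K\}$ is a locally finite hyperplane arrangement of $K$, since the local finiteness of $\fH$ restricts to $K$ and the members of $\fH_K$ drop out. Choosing a point $p \in A_K$ and a generic point $p' \in A'_K$, the segment $[p, p'] \subset K$ meets only finitely many of these $K$-hyperplanes, and after a generic choice of $p'$ it avoids every intersection of two or more of them, crossing each transversally at distinct parameter values. Recording the $K$-alcoves traversed yields a sequence $A_K = B_0, B_1, \ldots, B_r = A'_K$ in which consecutive $K$-alcoves $B_{i-1}, B_i$ share a common face, so $\bar B_{i-1} \cap \bar B_i$ spans a codimension-one affine subspace of $K$.

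Finally I would lift the gallery. By Lemma \ref{K-alcove}, each $B_i$ determines a unique alcove $A_i$ in $C$ with $B_i \subset \bar A_i$ and $\bar B_i = \bar A_i \cap K$; by the first step $A_0 = A$ and $A_r = A'$. Each $\bar A_i$ contains the regular points $B_i$ of $K$, and $\bar A_{i-1} \cap \bar A_i \cap K = \bar B_{i-1} \cap \bar B_i$ spans a codimension-one subspace of $K$; in particular $A_{i-1} \neq A_i$, since equality would force $B_{i-1}$ and $B_i$ into the single $K$-alcove $\bar A_i \cap K$. This produces the required sequence. The main obstacle is the gallery-connectivity step: making the transversality and genericity argument precise for the restricted arrangement on $K$, that is, verifying that two $K$-alcoves of a locally finite hyperplane arrangement are always joined by a sequence crossing one wall at a time.
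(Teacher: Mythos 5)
Your proposal is correct and takes essentially the same route as the paper: identify the $K$-alcoves $A_K$, $A'_K$ attached to $A$, $A'$, connect them by a gallery of $K$-alcoves crossing one $K$-hyperplane at a time, and lift back to alcoves of $C$ via Lemma \ref{K-alcove}. The only difference is that you supply a generic-segment argument for the gallery-connectivity step, which the paper simply asserts as a standard fact about the locally finite arrangement $\{H \cap K;\, H \in \fH - \fH_K\}$; that argument is fine.
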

\begin{proof}
Let $A_K$ (resp. $A_K'$) be the unique $K$-alcove contained in $\bar A$ (resp. $\bar A'$). Since $A \neq A' \subset C$, we see that $A_K \neq A_K'$. Then there is a sequence of $K$-alcoves $A_K=A_K^0, A_K^1, \cdots, A_K^r=A_K'$ such that $A_K^{i-1}$ and $A_K^i$ have a common face for any $i$. By Lemma \ref{K-alcove}, there is a unique alcove $A_i$ in $C$ such that $\bar A_K^i=\bar A_i \cap K$ for each $i$. It is clear that the sequence $A_0, A_1, \cdots, A_r$ meets our requirement.
\end{proof}

\begin{lem}\label{identity}
Let $\tw \in \tW$. Let $K \subset V_{\tw}$ be an affine subspace such that $\tw K=K$. Let $A$ and $A'$ be two alcoves such that $\bar A \cap \bar A'$ contains a regular point of $K$ and $\tw_A, \tw_{A'}$ are straight elements. Then $\tw_A=\tw_{A'}$.
\end{lem}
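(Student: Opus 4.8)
The plan is to normalise to the case $A=\D$ and then read everything off the double-coset description of straight elements provided by Proposition \ref{ell}. First I would replace $\tw$ by $\tw_A=x_A\i\tw x_A$. This carries $A$ to $\D$, carries $A'$ to the alcove $B=x_A\i A'$ (so that $x_B=x_A\i x_{A'}$), replaces $K$ by $x_A\i K\subset V_{\tw_A}$ and the common regular point $v$ by $x_A\i v$, and transforms the assertion $\tw_A=\tw_{A'}$ into $\tw=\tw_B$. After this reduction the hypotheses read: $v\in\bar\D\cap\bar B$ is a regular point of $K$, and both $\tw$ and $\tw_B$ are straight.

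Since $\bar\D$ contains the regular point $v$ of $K$, Proposition \ref{ell} applies with $A=\D$, giving $\tw=u\,\tw_{K,\D}$ with $u\in W_{I}$, $\tw_{K,\D}\in{}^I\tW^I$, $\tw_{K,\D}(I)=I$ and $\ell(\tw_{K,\D})=\langle\bar\nu_{\tw},2\rho\rangle$, where $I=I(K,\D)$; moreover $W_K=W_{K,\D}=W_I$ because $x_\D=1$. Straightness of $\tw$ forces $\ell(u)=\sharp\fH_K(\D,\tw\D)=0$, hence $u=1$ and $\tw=\tw_{K,\D}\in{}^I\tW$ with $\tw(I)=I$. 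The geometric point I would stress is that, $v$ being regular in $K$, the hyperplanes of $\fH$ passing through $v$ are exactly those of $\fH_K$; therefore $W_K$ is precisely the (finite) stabiliser of $v$ in $W$, and it acts simply transitively on the alcoves whose closure contains $v$. As $\D$ and $B$ are two such alcoves, the unique $g\in W$ with $g\D=B$, namely $g=x_B$, must lie in $W_K=W_I$.

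Write $z=x_B\in W_I$. Since $\tw(I)=I$, conjugation by $\tw$ restricts to a length-preserving automorphism $\s$ of $W_I$, so $\tw z=\s(z)\tw$ and hence $\tw_B=z\i\tw z=\bigl(z\i\s(z)\bigr)\tw$ with $z\i\s(z)\in W_I$. Because $\tw\in{}^I\tW$, this factorisation is length-additive: $\ell(\tw_B)=\ell\bigl(z\i\s(z)\bigr)+\ell(\tw)$. On the other hand $\tw_B$ is straight and conjugate to $\tw$, so $\ell(\tw_B)=\langle\bar\nu_{\tw},2\rho\rangle=\ell(\tw)$. Comparing gives $\ell\bigl(z\i\s(z)\bigr)=0$, whence $z\i\s(z)=1$ in the finite group $W_I$ and $\tw_B=\tw$; undoing the normalisation yields $\tw_{A'}=\tw_A$.

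The main obstacle is the geometric step identifying $x_B$ as an element of $W_K$: one has to recognise $W_K$ as the point-stabiliser of the regular point $v$ and invoke simple transitivity of this finite reflection group on the alcoves incident to $v$. Once $x_B\in W_I$ is established, the remainder is forced by straightness through a short length computation, with no case analysis required.
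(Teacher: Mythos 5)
Your proof is correct and takes essentially the same route as the paper's: normalize to $A=\D$, use Proposition \ref{ell} together with straightness to get $\tw\in{}^I\tW^I$ with $\tw(I)=I$, observe that $x_{A'}\in W_I$, and conclude from the length-additive factorization $\tw_{A'}=(x_{A'}\i\tw x_{A'}\tw\i)\tw$ and the equality $\ell(\tw_{A'})=\ell(\tw)$ that the $W_I$-factor is trivial. The only difference is that you spell out two steps the paper asserts without comment, namely that straightness forces $u=1$ in the decomposition of Proposition \ref{ell}, and that $x_{A'}$ lies in $W_K=W_I$ because $W_K$ is the stabilizer of the regular point and acts simply transitively on the alcoves whose closures contain it.
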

\begin{proof}
We may assume that $A$ is the fundamental alcove $\D$ by replacing $\tw$ by $\tw_A$. We simply write $I$ for $I(K, \D)$. By Proposition \ref{ell}, $\tw \in {}^I \tW^I$ and $\tw I=I$. Since $\bar A' \cap \bar \D$ contains a regular point of $K$, $x_{A'} \in W_I$. Thus $$\tw_{A'}=x_{A'} \i \tw x_{A'}=(x_{A'} \i \tw x_{A'} \tw \i) \tw$$ and $x_{A'} \i \tw x_{A'} \tw \i \in W_I$. Therefore $\ell(\tw_{A'})=\ell(\tw)+\ell(x_{A'} \i \tw x_{A'} \tw \i)$. Since $\ell(\tw_{A'})=\ell(\tw)$, we have
$x_{A'} \i \tw x_{A'} \tw \i=1$ and $\tw_{A'}=\tw$.
%we have $\tw C=C$ and $\tw C'=C'$. By assumption of the lemma, there is $h \in W_K$ such that $h A=A'$. Then $h C=C'$ and $\tw A', h\tw A \in C'$. Note that $\tw(e)=e+\nu_{\tw} \subset \tw(\bar A') \cap h\tw(\bar A)$, hence $\tw A'=h\tw A$. Then \begin{align*}\tw_{A'}(\D)&=x_A\i h\i \tw h x_A(D)=x_A\i h\i \tw(A')=x_A\i \tw(A)\\ &=\tw_A(\D).\end{align*} Hence $\tw_A=\tw_{A'}$.
\end{proof}
\begin{thm}\label{cyclic}
Let $\co$ be a straight $W$-conjugacy class of $\tW$. Then for any $\tw, \tw' \in \co_{\min}$, $\tw \approx \tw'$.
\end{thm}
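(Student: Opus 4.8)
The plan is to work with alcove representatives of $\co$, set $K=V_{\tw}$, and exploit straightness through Proposition \ref{cri}. Fix a base element and write every member of $\co$ as $\tw_A$ for an alcove $A$, where now $\tw$ denotes this base element and $K=V_{\tw}$. Since $\co$ is straight, criterion (2) of Proposition \ref{cri} gives $V_{\tw}\nsubseteq H$ for every $H\in\fH$, i.e. $\fH_K=\emptyset$; hence $V-\cup_{H\in\fH_K}H$ is all of $V$, a single connected component $C=V$. First I would apply Proposition \ref{red1} to the two given minimal elements to produce alcoves $A,A'$ whose closures contain regular points of $K$, with $\tw_A,\tw_{A'}$ reached from them by $\rightarrow$. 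Because the two starting elements already lie in $\co_{\min}$ and $\rightarrow$ never increases length, these reductions are in fact $\approx$, so it suffices to prove $\tw_A\approx\tw_{A'}$.

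With $A$ and $A'$ both in $C=V$ and touching $K$ regularly, I would invoke Lemma \ref{sequence} to obtain a gallery $A=A_0,A_1,\dots,A_r=A'$ of alcoves in $C$, each $\bar A_i$ containing a regular point of $K$, with $\bar A_{i-1}\cap\bar A_i\cap K$ spanning a codimension-one affine subspace $H_i\cap K$ of $K$. Since $\approx$ is transitive, the theorem reduces to the single-step claim $\tw_{A_{i-1}}\approx\tw_{A_i}$ for each $i$.

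The main obstacle is exactly this single step, where $A_{i-1}$ and $A_i$ share only a codimension-one slice of $K$ and need not be adjacent in $V$. Here I would split according to whether $\tw$ stabilizes the wall $H_i\cap K$. If $\tw(H_i\cap K)=H_i\cap K$ one is close to the situation of Lemma \ref{identity}, and I expect the two elements to coincide; if $\tw(H_i\cap K)\neq H_i\cap K$, Proposition \ref{con} already forces $\ell(\tw_{A_{i-1}})=\ell(\tw_{A_i})=\<\bar\nu_{\tw},2\rho\>$, so both remain straight. To upgrade this equality of lengths to a relation by cyclic shift I would localize at a regular point $z$ of $H_i\cap K$ lying in the common face: the finitely many hyperplanes of $\fH$ through $z$ all contain $H_i\cap K$ and generate a finite reflection group, so the alcoves whose closures contain $z$ are governed by a finite Coxeter group, inside which $A_{i-1}$ and $A_i$ correspond to two chambers. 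Applying the finite-type cyclic-shift theorem recalled in \S\ref{to} (equivalently the partial-conjugation Theorem \ref{par}) to this local group then yields $\tw_{A_{i-1}}\approx\tw_{A_i}$, and chaining over $i$ completes the argument. I expect the delicate point to be verifying that the localized element is of minimal length in its finite-group class, so that the finite theorem applies and the resulting cyclic shifts lift to genuinely length-preserving conjugations in $\tW$.
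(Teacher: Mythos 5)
Your reduction steps are sound and in fact track the paper's own proof: by Propositions \ref{red1} and \ref{ell} one may replace the two minimal elements by $\tw_A,\tw_{A'}$ with $\bar A,\bar A'$ containing regular points of $K=V_{\tw}$, and your use of Proposition \ref{cri}(2) to get $\fH_K=\emptyset$ (hence $C=V$) is a genuine streamlining --- it makes the paper's preliminary replacement of $A'$ by an alcove $A''$ in the component of $A$ (via Lemma \ref{identity}) unnecessary. The gallery from Lemma \ref{sequence}, the dichotomy $\tw(P_i)=P_i$ versus $\tw(P_i)\neq P_i$, and the treatment of the fixed-wall case by Lemma \ref{identity} also match the paper; that lemma does apply, since every $\tw_{A_i}$ is minimal (Proposition \ref{ell} together with $\fH_K=\emptyset$), hence straight.

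The gap is in the moving-wall case, and it sits exactly where all the content of the theorem lies. You propose to get $\ell(\tw_{A_{i-1}})=\ell(\tw_{A_i})$ from Proposition \ref{con} and then upgrade this to $\tw_{A_{i-1}}\approx\tw_{A_i}$ by localizing at a regular point $z$ of $P_i$ and applying ``the finite-type cyclic-shift theorem recalled in \S 1'' (equivalently Theorem \ref{par}). But neither of those results is a cyclic-shift theorem: part (2) of the theorem in Section 1 and part (2) of Theorem \ref{par} conclude strong conjugacy $\sim$, not $\approx$. In a finite Coxeter group, minimal length elements of a conjugacy class are in general \emph{not} related by $\approx$ (already for the reflection class in type $A_2$: $s_1\not\approx s_2$, since conjugating $s_1$ by $s_2$ increases length); the $\approx$-statement holds for elliptic classes, and there is no reason the localized class at $z$ is elliptic in the finite reflection group generated by the hyperplanes through $z$ --- indeed not every straight class is nice, so no such reduction can work in general. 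Since Theorem \ref{main}(2) already gives $\tw\sim\tw'$, invoking a $\sim$-type statement at the crucial step only reproves Theorem \ref{main}; the whole point of Theorem \ref{cyclic} is the passage from $\sim$ to $\approx$. The paper's mechanism is different and elementary: interpolate $A_{i-1}=B_0,B_1,\cdots,B_s=A_i$ by a chain of alcoves in which consecutive terms share a common face and $\bar B_{k-1}\cap\bar B_k\cap K$ spans $P_i$, and apply Proposition \ref{con} to \emph{every} consecutive pair. This is where Proposition \ref{con} earns its keep --- the intermediate $B_k$ need not contain regular points of $K$ in their closures, so Proposition \ref{ell} says nothing about them --- and it gives $\ell(\tw_{B_{k-1}})=\ell(\tw_{B_k})$; as $\tw_{B_k}$ is obtained from $\tw_{B_{k-1}}$ by conjugation by a single simple reflection, each step is by definition an $\approx$-step, and chaining yields $\tw_{A_{i-1}}\approx\tw_{A_i}$. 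Replacing your localization argument by this chain argument repairs the proof. (Your worry about minimality in the local finite group was never the issue: minimality in $\co$ trivially implies minimality in any sub-orbit.)
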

\begin{proof}
Let $\tu \in \co$ and $K=V_{\tu}$. Then by Proposition \ref{red1} and Proposition \ref{ell}, we may assume that $\tw=\tu_A$ and $\tw'=\tu_{A'}$, where $A$ and $A''$ are two alcoves whose closures contain regular points of $K$. Let $C$ be the connected component of $V-\cup_{H \in \fH_K}H$ that contains $A$ and $A''$ be the unique alcove in $C$ such that $\bar A' \cap \bar A''$ contains a regular point of $K$. By Proposition \ref{ell}, we have $\tu_{A''} \in \co_{\min}$. By Lemma \ref{identity}, we have that $\tu_{A'}=\tu_{A''}$.

It remains to show that $\tu_A \approx \tu_{A''}$. Assume $A \neq A''$. By Lemma \ref{sequence}, there is a sequence of alcoves $A=A_0, A_1, \cdots A_r=A''$ in $C$ such that $A_i$ contains a regular point of $K$ and $\bar A_{i-1} \cap \bar A_i \cap K$ spans a codimension one affine subspace $P_i$ of $K$ for $i=1, 2, \cdots, r$. By Proposition \ref{ell}, $\tu_{A_i} \in \co_{\min}$ for any $i$.

If $\tu P_i=P_i$, by Lemma \ref{identity}, we have that $\tu_{A_{i-1}}=\tu_{A_i}$. If $\tu(P_i) \neq P_i$, then there is a sequence of alcoves $A_{i-1}=B_0, B_1, \cdots, B_s=A_i$ in $C$ such that $B_{k-1}$ and $B_k$ share a common face and $\bar B_{k-1} \cap \bar B_k \cap K$ spans $P_i$ for $k=1, \cdots, s$. By Proposition \ref{con}, we have that $\ell(\tu_{B_{k-1}})=\ell(\tu_{B_k})$ and $\tu_{B_{k-1}} \approx \tu_{B_k}$ for $k=1, \cdots, s$. So $\tu_{A_{i-1}} \approx \tu_{A_i}$. Hence $\tu_A \approx \tu_{A''}$.

\end{proof}

\section{Centralizer in $W$}

\subsection{}\label{nice} Let $(W, S)$ be a Coxeter group and $\Om$ be a group with a group homomorphism to $\Aut(W, S)$. Let $\tW=W \rtimes \Om$. Let $\tw \in \tW$ be a minimal length element in its conjugacy class. Let $\cp_{\tw}$ be the set of sequences $\textbf{i}=(s_1, \cdots, s_r)$ of $S$ such that $$\tw \overset {s_1} \to s_1 \tw s_1 \overset {s_2} \to \cdots \overset {s_r} \to s_r \cdots s_1 \tw s_1 \cdots s_r.$$ We call such sequence a path form $\tw$ to $s_r \cdots s_1 \tw s_1 \cdots s_r$. Denote by $\cp_{\tw,\tw}$ the set of all paths from $\tw$ to itself. Let $W_{\tw}=\{x \in W; \ell(x \i \tw x)=\ell(\tw)\}$ and $Z(\tw)=\{x \in W; x \tw=\tw x\}$.

There is a natural map $$\tau_{\tw}: \cp_{\tw} \to W_{\tw},~(s_1, \cdots, s_r) \mapsto s_1 \cdots s_r.$$ which induces a natural map $\tau_{\tw,\tw}: \cp_{\tw,\tw} \to Z(\tw)$.

We call a $W$-conjugacy class $\co$ of $\tW$ {\it nice} if for some (or equivalently, any) $\tw \in \co_{\min}$, the map $\tau_{\tw}: \cp_{\tw} \to W_{\tw}$ is surjective. It is easy to see that $\co$ is nice if and only if
properties (1) and (2) below hold for $\co$:

(1) For any $\tw, \tw' \in \co_{\min}$, $\tw \approx \tw'$;

(2) For any $\tw \in \co_{\min}$, the map $\tau_{\tw,\tw}: \cp_{\tw,\tw} \to Z(\tw)$ is surjective.

The definition of nice conjugacy classes is inspired by a conjecture of Lusztig \cite[1.2]{L4} that property (2) holds for elliptic conjugacy classes of a finite Weyl group.

\subsection{} For finite Weyl groups, nice conjugacy classes play an important role in the study of Deligne-Lusztig varieties and representations of finite groups of Lie type. Property (1) is a key ingredient to prove that Deligne-Lusztig varieties corresponding to minimal length elements in a nice conjugacy class are universally homeomorphic. Property (2) leads to nontrivial (quasi-)automorphisms on Deligne-Lusztig varieties and their cohomology groups. For more details, see \cite{DM} and \cite{L4}.

Nice conjugacy classes for affine Weyl groups will also play an important role in the study of affine Deligne-Lusztig varieties. See \cite{H99}.

\subsection{} The main goal of this section is to classify nice conjugacy classes for both finite and affine Weyl groups.

We first consider finite Coxeter group. Let $(W_0, S_0)$ be a finite Coxeter group and $\Om' \subset \Aut(W_0, S_0)$. Set $\tW_0=W_0 \rtimes \Om'$. Let $z=w \s \in \tW_0$ with $w \in W_0$ and $\s \in \Om'$. We denote by $\supp(w)$ the {\it support} of $w$, i.e., the set of simple reflections appearing in a reduced expression of $w$. Set $\supp(z)=\cup_{n \in \ZZ} \s^n \supp(w)$. We call $\supp(z)$ the support of $z$. It is a $\s$-stable subset of $S_0$.

We call a $W_0$-conjugacy class $\co$ of $\tW_0$ {\it elliptic} if $\supp(\tw)=S_0$ for any $\tw \in \co$. An element in an elliptic conjugacy class is called an elliptic element.

We have the following result.

\begin{thm}
Any elliptic conjugacy class in a finite Coxeter group is nice.
\end{thm}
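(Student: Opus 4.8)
The plan is to prove the equivalent formulation recorded just above the statement: $\co$ is nice if and only if (1) any two elements of $\co_{\min}$ are related by $\approx$ and (2) for each $\tw\in\co_{\min}$ the map $\tau_{\tw,\tw}\colon\cp_{\tw,\tw}\to Z(\tw)$ is surjective. Both are subsumed by the single assertion that $\tau_{\tw}\colon\cp_{\tw}\to W_{\tw}$ is surjective for one, hence every, $\tw\in\co_{\min}$, and I would establish this surjectivity directly. The natural device is induction on $\ell(x)$ over $x\in W_{\tw}$, proved simultaneously for all $\tw\in\co_{\min}$ (the empty path handles $x=1$). The inductive mechanism is \emph{length-preserving descent}: if $x\neq 1$ and one can find $s\in S_0$ with $\ell(sx)<\ell(x)$ and $\ell(s\tw s)=\ell(\tw)$, then $\tw\xrightarrow{s}s\tw s\in\co_{\min}$, and a direct computation gives $(sx)^{-1}(s\tw s)(sx)=x^{-1}\tw x$, so $sx\in W_{s\tw s}$ has strictly smaller length and the same endpoint. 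Applying the inductive hypothesis to $s\tw s$ and $sx$ yields a path realizing $sx$ from $s\tw s$; prepending the step $s$ gives a path from $\tw$ with $\tau_{\tw}=s\cdot(sx)=x$, closing the induction.

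Two formal remarks make this framework self-contained. First, surjectivity of $\tau_{\tw}$ immediately returns property (1): a realizing path for $x$ is a sequence of $\xrightarrow{s}$-moves with weakly decreasing lengths whose terms all lie in $\co$, so minimality of $\tw$ forces every step to preserve length, whence $\tw\approx x^{-1}\tw x$; as every element of $\co_{\min}$ is of the form $x^{-1}\tw x$ with $x\in W_{\tw}$, any two minimal length elements are $\approx$-equivalent. Second, this shows the statement is genuinely stronger than the Geck--Pfeiffer strong-conjugacy theorem recalled earlier, which only yields $\sim$: the reflection class $\{s,t\}$ of the symmetric group $S_3$ has $s\sim t$ but $s\not\approx t$ (from $s$ the only length-preserving conjugation is trivial), and that class is non-elliptic. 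Thus ellipticity must be used in an essential, non-formal way; the relevant rigidity is that an elliptic $\tw$ has $\supp(\tw)=S_0$ and lies in no proper parabolic, equivalently $V_{\tw}=\{0\}$ (no nonzero fixed vector).

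The crux, and the main obstacle, is guaranteeing the good descent in the inductive step: it can happen that \emph{every} left descent $s$ of $x$ satisfies $\ell(s\tw s)=\ell(\tw)+2$, so that no simple reflection simultaneously shortens $x$ and preserves length under conjugation. The resolution is to exploit the cyclic-shift freedom already built into the induction: one replaces $\tw$ by a suitable cyclic shift $\tw_1\approx\tw$, transporting $x$ along the connecting (length-preserving) path, so that relative to $\tw_1$ a length-preserving descent of the transported element does appear, and then prepends the connecting path to the inductively obtained one. Producing such a shift --- equivalently, in the case $x\in Z(\tw)$, showing that $Z(\tw)$ is generated by cyclic shifts, which is Lusztig's conjecture --- is the hard core of the argument.

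I would carry out this last step by a case-free root-system analysis, studying the reflections of the form $\tw s\tw^{-1}$ and the geometry of the set $W_{\tw}$, using essentially that $\tw$ has no nonzero fixed vector (so that, unlike in the $S_3$ example, the minimal length conjugates cannot split into distinct $\approx$-classes and the descent cannot stall indefinitely). The careful length bookkeeping needed to ensure that each cyclic-shift detour still makes net progress in the induction is the delicate part that cannot be shortcut through the weaker relation $\sim$, and it is precisely here that the elliptic hypothesis is indispensable.
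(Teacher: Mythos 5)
Your proposal sets up a sensible reduction (induction on $\ell(x)$ for $x\in W_{\tw}$, with length-preserving descent as the inductive mechanism, and the correct observation that surjectivity of $\tau_{\tw}$ yields both defining properties of niceness), and your $S_3$ example correctly shows why $\sim$ cannot be upgraded to $\approx$ formally. But the proposal has a genuine, and in fact total, gap: the entire mathematical content of the theorem is concentrated in the step you leave unexecuted, namely producing a cyclic shift $\tw_1\approx\tw$ that unblocks the descent when every left descent $s$ of $x$ satisfies $\ell(s\tw s)=\ell(\tw)+2$. You acknowledge yourself that this step is equivalent to Lusztig's conjecture on $Z(\tw)$, i.e.\ to the theorem being proved; announcing that you ``would carry out this last step by a case-free root-system analysis, studying the reflections $\tw s\tw^{-1}$ and the geometry of $W_{\tw}$'' is a restatement of the problem, not an argument. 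In particular, the elliptic hypothesis ($V^{\tw}\subset V^{W_0}$, no nonzero fixed vector in the semisimple part) is never actually used in any deduction --- it is only invoked rhetorically as ``the relevant rigidity'' --- so the proposal is circular: it reduces the theorem to itself plus an unexecuted sketch.

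For comparison, the paper does not prove this theorem by such an induction at all; it imports it from the companion paper \cite{HN}, whose proof runs through a genuinely different, geometric criterion, restated in $\S$\ref{ccon}(a) of this paper: a class $\co$ is nice if and only if $(\cup_{A \in \fC_{\tw}(\fH_0)} \bar A) \cap V_{\fH_0}^{subreg}$ is connected for $\tw\in\co_{\min}$, where conjugates of $\tw$ are parametrized by chambers. Ellipticity enters concretely there: it controls the fixed-point set and hence the chamber geometry, making the connectivity statement provable case-freely (\cite[Corollary 4.4]{HN} is exactly what the proof of Theorem \ref{lusztig} invokes to conclude that $x_1$ lies in the image of $\tau_{\tw}$). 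Any completion of your plan would have to supply an argument of this substance at the point where your induction stalls; as written, the proposal proves nothing beyond the formal reductions.
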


%Property (1) in $\S$\ref{nice} was first proved by Geck and Pfeiffer in \cite{GP00} for nontwisted case (i.e. $\tW_0=W_0$) and then by the first author in \cite{He1} for twisted case. Property (2) was first conjectured by Lusztig in \cite[1.2]{L4} and verified for $\tW_0=W_0$ of classical type in loc. cit. A general proof for both properties was later obtained in \cite{HN}.

\subsection{}\label{ccon} In order to classify nice conjugacy classes for finite Coxeter group, we first recall the geometric interpretation of conjugacy classes and length function in \cite{HN}.

Let $V$ be a finite dimensional vector space over $\RR$ and $\fH_0$ be a finite set of hyperspaces of $V$ through the origin such that $s_H(\fH_0)=\fH_0$ for all $H \in \fH_0$. Let $W_0 \subset GL(V)$ be the subgroup generated by $s_H$ for $H \in \fH_0$. Let $\fC(\fH_0)$ be the set of connected components of $V-\cup_{H \in \fH_0} H$. We call an element in $\fC(\fH_0)$ a {\it chamber}. We fix a fundamental chamber $C_0$. For any two chambers $C, C'$, we denote by $\fH_0(C,C')$ the set of hyperspaces in $\fH_0$ separating $C$ from $C'$. Let $S_0=\{s_H \in W_0; \sharp \fH_0(C_0,s_H C_0)=1\}$. Then $(W_0, S_0)$ is a finite Coxeter group. Let $\tW_0=W_0\rtimes \Om'$ where $\Om' \subset GL(V)$ consists of automorphism preserving $S_0$. Then $\ell(w)=\sharp \fH_0(C_0,wC_0)$ is the length function on $\tW_0$.

It is known that $W_0$ acts simply transitively on the set of chambers. For any chamber $C$, we denote by $x_C$ the unique element in $W_0$ with $x_C(C_0)=C$. Here $C_0$ is the fundamental chamber. Then any element in the $W_0$-conjugacy class of $\tw$ is of the form $\tw_C=x_C \i \tw x_C$ for some chamber $C$.

For any $\tw \in \tW_0$, we denote by $\fC_{\tw}(\fH_0)$ the set of chambers $C$ such that $\tw_C$ is of minimal length in its $W_0$-conjugacy class. We denote by $V_{\fH_0}^{subreg}$ the set of points in $V$ that is contained in at most one $H$ for $H \in \fH_0$. By \cite[Lemma 4.1]{HN},

(a) A $W_0$-conjugacy class $\co$ of $\tW_0$ is nice if and only if $(\cup_{A \in \fC_{\tw}(\fH_0)} \bar A) \cap V_{\fH_0}^{subreg}$ is connected for some (or equivalently, any) $\tw \in \co$.

By \cite[Lemma 7.2]{He1}, a $W_0$-conjugacy class $\co$ of $\tW_0$ is elliptic if and only if for some (or equivalently, any) element $\tw \in \co$, the fixed point set $V^{\tw} \subset V^{W_0}$. Now we introduce the weakly elliptic conjugacy classes.

\begin{prop}\label{well}
Let $\co$ be a $W_0$-conjugacy class of $\tW_0$. Then the following conditions are equivalent:

(1) For some $\tw \in \co_{\min}$, $\supp(\tw)$ is a union of some connected components of the Dynkin diagram of $S_0$ and $\tw$ commutes with any element in $S_0-\supp(\tw)$.

(2) For any $\tw \in \co$, $\supp(\tw)$ is a union of some connected components of the Dynkin diagram of $S_0$ and $\tw$ commutes with any element in $S_0-\supp(\tw)$.

(3) For some (or equivalently, any) element $\tw \in \co$, $s_H V^{\tw}=V^{\tw}$ for all $H \in \fH_0$.

In this case, we call $\co$ a {\it weakly elliptic conjugacy class} and any element in $\co$ a {\it weakly elliptic element}.
\end{prop}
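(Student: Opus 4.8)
The plan is to reduce all three conditions to a single statement about the fixed space of one element, and then exploit that this fixed-space condition is manifestly constant on a $W_0$-conjugacy class. First I would record the easy observations. Condition (3) is invariant under $W_0$-conjugacy: if $\tw'=x\tw x\i$ with $x\in W_0$, then $V^{\tw'}=x(V^{\tw})$, and since $x$ normalizes $\fH_0$, the subspace $V^{\tw'}$ is stable under every $s_H$ exactly when $V^{\tw}$ is. Hence ``for some $\tw$'' and ``for any $\tw$'' in (3) agree. Also $(2)\Rightarrow(1)$ is trivial because $\co_{\min}\subseteq\co$ is nonempty.

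The heart of the argument is the following equivalence for a single element $\tw=w\s$ (with $w\in W_0$, $\s\in\Om'$), where I write $J=\supp(\tw)$: the subspace $V^{\tw}$ is $W_0$-stable if and only if $J$ is a union of connected components of the Dynkin diagram of $S_0$ and $\tw$ commutes with every element of $S_0-J$. To prove this I would use the decomposition $V=V^{W_0}\oplus\bigoplus_C V_C$, where $C$ runs over the connected components of $S_0$ and $V_C$ is the irreducible reflection representation of $W_C$ inflated to $W_0$. The $V_C$ are pairwise non-isomorphic nontrivial irreducibles and $V^{W_0}$ is the trivial isotypic component, so a subspace $U$ is $W_0$-stable precisely when $U=(U\cap V^{W_0})\oplus\bigoplus_C(U\cap V_C)$ with each $U\cap V_C\in\{0,V_C\}$. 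Since $\Om'$ permutes the components, $\tw$ preserves this decomposition, fixing $V^{W_0}$ and mapping $V_C$ to $V_{\s C}$; thus $V^{\tw}$ splits according to the $\s$-orbits of components, and $W_0$-stability is checked orbit by orbit.

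Concretely, for a $\s$-orbit $\mathcal O=\{C,\s C,\dots,\s^{k-1}C\}$ set $V_{\mathcal O}=\bigoplus_i V_{\s^i C}$. A $\tw$-fixed vector of $V_{\mathcal O}$ is determined by its $V_C$-component, so $V_{\mathcal O}^{\tw}\cong V_C^{\tw^k}$, embedded diagonally. Applying the elliptic criterion \cite[Lemma 7.2]{He1} to $W_{\mathcal O}=\prod_i W_{\s^i C}$ acting on $V_{\mathcal O}$, one gets $V_{\mathcal O}^{\tw}=0$ iff $\tw$ is elliptic on $V_{\mathcal O}$ iff every component of $\mathcal O$ lies in $J$. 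When $k>1$, each $V^{\tw}\cap V_{\s^i C}=0$ (a vector of $V_{\s^i C}$ fixed by $\tw$ lies in $V_{\s^i C}\cap V_{\s^{i+1}C}=0$), so a nonzero diagonal $V_{\mathcal O}^{\tw}$ is not a sum of full components and $W_0$-stability forces $V_{\mathcal O}^{\tw}=0$, i.e. $\mathcal O\subseteq J$. When $k=1$, say $\mathcal O=\{C\}$, $W_0$-stability requires $V_C^{\tw}\in\{0,V_C\}$: the first holds iff $C\subseteq J$, the second iff $\tw$ acts trivially on $V_C$, i.e. iff $C\cap J=\emptyset$ and $\s$ acts trivially on $C$. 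Collecting these, $V^{\tw}$ is $W_0$-stable iff every component is either contained in $J$ (such components occurring in full $\s$-orbits) or disjoint from $J$ with $\s$ trivial on it; the former says $J$ is a union of components, and the latter is exactly that $\tw$ commutes with $S_0-J$, since once $J$ is a union of components a direct computation gives $\tw\,t\,\tw\i=\s(t)$ for $t\in S_0-J$.

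Finally I would assemble the pieces. By the equivalence just proved, the property ``$J$ is a union of components and $\tw$ commutes with $S_0-J$'' holds for a given $\tw\in\co$ iff $V^{\tw}$ is $W_0$-stable, and the latter is constant on $\co$. Therefore this property holds for one element of $\co$ iff it holds for all of them, which yields $(1)\Rightarrow(2)$, while $(2)\Rightarrow(1)$ is immediate, and both are equivalent to (3). The step I expect to be the main obstacle is the orbit-by-orbit bookkeeping in the case $k>1$: making precise that a nonzero diagonal fixed space $V_{\mathcal O}^{\tw}$ can never be $W_0$-stable, that ellipticity of $\tw^k$ on a single component is equivalent to the whole orbit lying in $J$, and that the elliptic criterion of \cite{He1} applies verbatim to the twisted parabolic $W_{\mathcal O}\rtimes\langle\s\rangle$.
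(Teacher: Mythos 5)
There is a genuine gap, and it sits exactly where you predicted trouble: in the application of the elliptic criterion. Your key single-element equivalence --- ``$V^{\tw}$ is $W_0$-stable if and only if $J=\supp(\tw)$ is a union of connected components and $\tw$ commutes with $S_0-J$'' --- is false, and the failure is the implication you extract from \cite[Lemma 7.2]{He1}: that if every component of a $\s$-orbit of components lies in $J$, then the $\tw$-fixed vectors in the corresponding summand vanish. That lemma characterizes ellipticity of a \emph{conjugacy class}: $V^{\tw}\subset V^{W_0}$ holds if and only if \emph{every} element of the class (equivalently, a minimal length element) has full support --- not if and only if the given element $\tw$ has full support. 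Concretely, take $W_0$ of type $A_2$, $\s=1$, $\tw=s_1s_2s_1$. Then $\supp(\tw)=S_0$ is (trivially) a union of components and $S_0-\supp(\tw)=\emptyset$, so the right-hand side of your equivalence holds; but $\tw$ is the reflection in the highest root, $V^{\tw}$ is the hyperplane orthogonal to it, and $s_1V^{\tw}=V^{s_1\tw s_1}=V^{s_2}\neq V^{\tw}$, so the left-hand side fails. The same example destroys your final assembly step: the support-and-commutation property holds for $s_1s_2s_1$ but fails for its conjugate $s_1$, so that property (for \emph{arbitrary} elements) is not constant on conjugacy classes, contrary to what you deduce. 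A telltale sign is that your argument never uses the hypothesis $\tw\in\co_{\min}$ in (1); that hypothesis is essential, since the proposition becomes false if (1) is weakened to ``some $\tw\in\co$''.

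What your isotypic decomposition does prove correctly is one direction: stability of $V^{\tw}$ under all $s_H$ implies the support condition (your diagonal argument for orbits of size $k>1$, the $k=1$ dichotomy, and the implication ``fixed vectors vanish on an orbit $\Rightarrow$ that orbit lies in $J$'' are all fine --- the last one via fundamental coweights). The converse direction genuinely needs conjugacy-class-level input, and this is what the paper supplies. For (1)$\Rightarrow$(2) it uses that a \emph{minimal length} element with $\supp(\tw)=S'_0$ is elliptic in $W_{S'_0}\rtimes\<\s\>$, so all its conjugates there retain full support; for (2)$\Rightarrow$(3) ellipticity of the class in $W_{S'_0}\rtimes\<\s\>$ gives $V^{\tw}=V^{W_{S'_0}\rtimes\<\s\>}$, which is visibly reflection-stable; and for (3)$\Rightarrow$(1) it invokes \cite[Proposition 2.2]{HN} to produce a minimal length element $\tw$ whose fundamental chamber closure contains a regular point $e$ of $V^{\tw}$, from which $\supp(\tw)=\{s\in S_0; s(e)=e\}$, and the commutation with $S_0-\supp(\tw)$ follows from condition (3) together with Lemma \ref{commute}. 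Your approach could only be salvaged by restricting the key claim to minimal length elements and importing exactly this machinery, at which point it reduces to the paper's argument.
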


\begin{proof}
(1)$\Rightarrow$(2). Assume that $\co \subset W_0 \s$ for $\s \in \Om'$. Assume that $\tw \in \co_{\min}$ and $\supp(\tw)=S'_0$ is a union of some connected components of the Dynkin diagram of $S_0$, $\s(s)=s$ for all $s \in S_0-S'_0$. Then $\supp(x \tw x \i) \subset S'_0$ for any $x \in W_0$. By our assumption, $\tw$ is elliptic in $W_{S'_0} \rtimes \<\s\>$. Thus $\supp(x \tw x \i)=S'_0$ for any $x \in W_0$.

(2)$\Rightarrow$(3). Assume that $\co \subset W_0 \s$ for some $\s \in \Om'$. By assumption, there exists $S'_0 \subset S_0$, which is a union of some connected components of the Dynkin diagram of $S_0$ such that $\s(s)=s$ for all $s \in S_0 - S'_0$ and $\supp(\tw)=S'_0$ for any $\tw \in \co$. Then $\co$ is elliptic in $W_{S'_0} \rtimes \<\s\>$. Therefore $V^{\tw}=V^{W_{S'_0} \rtimes \<\s\>}$ for all $\tw \in \co$. Hence $s_H V^{\tw}=V^{\tw}$ for all $H \in \fH_0$.

(3)$\Rightarrow$(1). If $V^{\tw}=\{0\}$ for some $\tw \in \co$, then $\co$ is elliptic and condition (2) is satisfied. Now we assume that $V^{\tw} \neq \{0\}$ for any $\tw \in \co$. By \cite[Proposition 2.2]{HN}, there exists $\tw \in \co_{\min}$ such that $\bar C_0$ contains a regular point $e$ of $V^{\tw}$. Assume that $\tw=w \s$ for $w \in W_0$ and $\s \in \Om'$. Then $w \s(e)=e$ is the unique dominant element in the $W_0$-orbit of $e$. Thus $e=\s(e)=w(e)$.

Let $J=\{s \in S_0; s(e)=e\}$. Then $\s(J)=J$ and $w \in W_J$. Since $e$ is a regular element in $V^{\tw}$, we have that $V^{\tw} \subset V^{W_J}$. Thus $\tw$ is an elliptic element in $W_J \rtimes \<\s\>$ and $\supp(\tw)=J$.

Let $s \in S_0 - J$. Then $s(V^{\tw})=V^{\tw}$. Thus $s(e)=e+\<e, \a\> \a^\vee \in V^{\tw}$, where $\a$ is the positive root corresponding to $s$. Since $s \notin J$, we have $s(e) \neq e$ and $\a^\vee \in V^{\tw}$. Hence $\tw s=s \tw$. The statement follows from the following Lemma \ref{commute}.
\end{proof}

\begin{lem}\label{commute}
Let $\tw=w \s$ with $w \in W_0$ and $\s \in \Om'$. Let $s \in S_0-\supp(\tw)$. Then $s \tw=\tw s$ if and only if $s=\s(s)$ commutes with each element of $\supp(\tw)$.
\end{lem}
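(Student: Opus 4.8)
The plan is to translate the group-theoretic relation $s\tw=\tw s$ into the action of $w$ on the simple root $\a_s$, and then to extract the pointwise commutation from a fixed-vector analysis. Throughout I write $J=\supp(\tw)$ and $J'=\supp(w)\subseteq J$; recall that $J$ is $\s$-stable and $s\notin J$, so also $\s(s)\notin J$, and that $w\in W_{J'}$.

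The easy direction I would dispose of first. If $\s(s)=s$ and $s$ commutes with every element of $\supp(\tw)\supseteq\supp(w)$, then $s$ commutes with $w$, and from $\s s\s\i=\s(s)=s$ we get $s\s=\s s$; hence $s\tw=sw\s=ws\s=w\s s=\tw s$. For the converse, I would rewrite $s\tw=\tw s$ using $\s s=\s(s)\s$ as $sw=w\s(s)$, i.e. $w\i s w=\s(s)$. Comparing these reflections gives $w\i(\a_s)=\pm\a_{\s(s)}$; since $\a_s$ is a positive root lying outside $\Phi_{J'}$ and $w\i\in W_{J'}$ preserves the positive roots outside $\Phi_{J'}$, the image is positive, so $w\i(\a_s)=\a_{\s(s)}$. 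Expanding $w\i(\a_s)=\a_s+\sum_{t\in J'}c_t\a_t$ (the coefficient of $\a_s$ is unchanged by reflections from $J'$, hence equals $1$) and comparing with $\a_{\s(s)}$, whose $\a_s$-coefficient is $0$ unless $\s(s)=s$, forces $\s(s)=s$; consequently $w(\a_s)=\a_s$.

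It remains to upgrade ``$w$ fixes $\a_s$'' to ``$s$ commutes with each $t\in J'$'', which is the crux. I would decompose $\a_s=\b+\a_s^{\perp}$, where $\b$ is the orthogonal projection of $\a_s$ onto $U=\mathrm{span}(\Phi_{J'})$ and $\a_s^{\perp}\in U^{\perp}$. Since $w\in W_{J'}$ acts trivially on $U^{\perp}$ and stabilizes $U$, the identity $w(\a_s)=\a_s$ yields $w(\b)=\b$. For each simple $t\in J'$ one has $\<\b,\a_t^\vee\>=\<\a_s,\a_t^\vee\>\le 0$, so $\b$ is anti-dominant for the sub-root-system $\Phi_{J'}$. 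The stabilizer in $W_{J'}$ of an anti-dominant vector is the standard parabolic generated by $\{t\in J':\<\b,\a_t^\vee\>=0\}$; as $w$ lies in this stabilizer and has full support $J'$, that set must be all of $J'$. Hence $\<\a_s,\a_t^\vee\>=0$ for every $t\in J'$, i.e. $s$ commutes with each element of $J'$. Finally, since $\s(s)=s$, the element $s$ commutes with $\s^n(t)$ if and only if it commutes with $t$, so the conclusion propagates from $J'=\supp(w)$ to all of $\supp(\tw)=\cup_n\s^n(J')$.

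The main obstacle is precisely this last step: commuting with the single element $w$ is a priori far weaker than commuting with each generator of its support, and the naive hope that a full-support element has no nonzero fixed vector is simply false (a reflection with full support fixes a hyperplane). What rescues the argument is that the fixed vector $\b$ is not arbitrary but \emph{anti-dominant}, forced by the nonpositivity of inner products of distinct simple roots; this pins its stabilizer down to a \emph{standard} parabolic, and only then does full support of $w$ compel the orthogonality $\<\a_s,\a_t^\vee\>=0$. The rest of the proof is routine bookkeeping with the $\s$-action.
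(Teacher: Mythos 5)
Your proof is correct, but it takes a genuinely different route from the paper's. The paper stays entirely inside Coxeter combinatorics: with $J=\supp(w)$ and $J'=\{s'\in J;\ ss'=s's\}$, it decomposes $w=abc$ with $a\in W_{J'}$, $b\in{}^{J'}W_0^{\s(J')}$, $c\in W_{\s(J')}$, deduces $sb=b\s(s)$ from $sw=w\s(s)$ (using that $s$ commutes with $W_{J'}$ and $\s(s)$ with $W_{\s(J')}$), and then forces $b=1$ by playing the minimal-coset-representative conditions against the fact that no element of $J-J'$ commutes with $s$; this yields $s=\s(s)$ and $w\in W_{J'}$, hence $J'=J$, i.e. all of $\supp(w)$ commutes with $s$. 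You instead pass to the reflection representation: from $w\i s w=\s(s)$ you get $w\i(\a_s)=\a_{\s(s)}$ (positivity because $w\i\in W_{\supp(w)}$ permutes the positive roots outside $\Phi_{\supp(w)}$), comparison of the $\a_s$-coefficient gives $\s(s)=s$ and $w(\a_s)=\a_s$, and then the projection $\b$ of $\a_s$ onto $\mathrm{span}(\Phi_{\supp(w)})$ is a $w$-fixed anti-dominant vector, so the classical theorem that the stabilizer of a point in the closed (anti-)dominant chamber is the standard parabolic generated by the simple reflections fixing that point, combined with $\supp(w)$ being the full support, forces $\<\a_s,\a_t^\vee\>=0$ for all $t\in\supp(w)$. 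Your route trades the paper's delicate coset-representative bookkeeping for standard root-system facts (the stabilizer theorem being the key input), and it isolates cleanly why full support matters --- it pins the stabilizer parabolic down to all of $\supp(w)$; the paper's argument, by contrast, needs no inner product or geometric representation and is more self-contained within the Coxeter-group framework. The final propagation from $\supp(w)$ to $\supp(\tw)=\cup_n\s^n(\supp(w))$ via $\s$-equivariance is the same in both.
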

\begin{proof}
Let $J=\supp(w)$ and $J'=\{s' \in J; ss'=s's\}$. Write $w$ as $w=a b c$ for $a \in W_{J'}$, $c \in W_{\s(J')}$ and $b \in {}^{J'} W_0^{\s(J')}$. Then $s b=b \s(s)$. Since $s \notin J$ and each element $J-J'$ does not commute with $s$, we have that $b \s(s)=s b \in {}^{J-J'} W_0$. Therefore $b \in {}^{J-J'} W_0$. Since $b \in {}^{J'} W_0$ and $b \in W_J$, we must have that $b=1$. Hence $s=\s(s)$ and $\s(J')=J'$. So $J'=\supp(w)=J$. Now $s s'=s' s$ for all $s' \in J$. Hence for any $n \in \ZZ$ and $s' \in J$, $s \s^n(s')=\s^n(s s')=\s^n(s' s)=\s^n(s') s$. Therefore $s$ commutes with every element in $\supp(\tw)=\cup_{n \in \ZZ} \s^n(J)$.
\end{proof}

\

Now we classify nice conjugacy classes for $\tW_0$.

\begin{thm}\label{lusztig}
Let $\co$ be a $W_0$-conjugacy class of $\tW_0$. Then $\co$ is nice if and only if it is weakly elliptic.
\end{thm}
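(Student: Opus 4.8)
The plan is to prove both implications, relying on three ingredients already available: the equivalent descriptions of weakly elliptic classes in Proposition \ref{well}, the commutation criterion of Lemma \ref{commute}, and the geometric reformulation of niceness recorded as condition (a) in \S\ref{ccon}. The first step I would take is to restate ``weakly elliptic'' in the most convenient form. Fix $\tw\in\co_{\min}$ with $\bar C_0$ containing a regular point $e$ of $V^{\tw}$ (such a representative exists by \cite[Proposition 2.2]{HN}, exactly as in the proof of Proposition \ref{well}), and set $J=\supp(\tw)=\{s\in S_0;\,s(e)=e\}$, so that $\tw$ is elliptic in $W_J\rtimes\<\s\>$. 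Combining Lemma \ref{commute} with Proposition \ref{well}(1), $\co$ is weakly elliptic if and only if every $s\in S_0-J$ commutes with $\tw$; indeed, if each such $s$ commutes with $\tw$ then by Lemma \ref{commute} no edge of the Dynkin diagram joins $S_0-J$ to $J$, so $J$ is automatically a union of connected components.

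For the ``if'' direction I would exploit the resulting product structure. Write $S_0=J\sqcup J'$ with $J=\supp(\tw)$ a union of connected components, so $W_0=W_J\times W_{J'}$, and note that by Lemma \ref{commute} every $s\in J'$ satisfies $s=\s(s)$ and commutes with $W_J$; hence $W_{J'}$ centralizes $\tw=w\s$. A short computation then shows that conjugation by $W_{J'}$ fixes $\tw$, so as a set $\co$ coincides with the $W_J$-conjugacy class $\co_1$ of $\tw$ in $W_J\rtimes\<\s\>$, which is elliptic there; moreover $\ell$ restricted to $W_J\rtimes\<\s\>$ agrees with the length in $\tW_0$ because $J$ is a union of components, so the two classes have the same minimal length set. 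Property (1) of niceness for $\co$ now follows from the theorem that any elliptic conjugacy class is nice, applied inside $W_J\rtimes\<\s\>$, since the cyclic shifts produced there use only simple reflections of $J\subset S_0$. For property (2) I would compute $Z_{W_0}(\tw)=Z_{W_J}(\tw)\times W_{J'}$: elliptic niceness gives paths in $J$ realizing every element of $Z_{W_J}(\tw)$ under $\tau_{\tw,\tw}$, while for $s\in J'$ the relation $\tw\xrightarrow{s}s\tw s=\tw$ is a length-preserving path with $\tau_{\tw,\tw}$-image $s$; concatenating paths (the two factors commute) surjects onto the whole centralizer. Hence $\co$ is nice.

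For the ``only if'' direction I would argue by contraposition using condition (a) of \S\ref{ccon}. Assume $\co$ is not weakly elliptic, so by the reformulation above there is $s_0\in S_0-J$ with $s_0\tw\neq\tw s_0$; by the analysis in the proof of Proposition \ref{well} this forces $s_0 V^{\tw}\neq V^{\tw}$, i.e.\ the hyperplane $H_{s_0}$ is oblique to $V^{\tw}$ (so $\a^\vee_{s_0}\notin V^{\tw}$ and $V^{\tw}\not\subset H_{s_0}$). Thus $P:=H_{s_0}\cap V^{\tw}$ is a genuine hyperplane of $V^{\tw}$, and the regular point $e$ lies in one of the two open half-spaces it bounds. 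The goal is to show that $(\cup_{A\in\fC_{\tw}(\fH_0)}\bar A)\cap V_{\fH_0}^{subreg}$ is disconnected, whence $\co$ fails to be nice. The class of a single reflection inside a connected $A_2$ already exhibits the mechanism: the minimal length region splits into the two wedges straddling the two rays of $V^{\tw}$ on opposite sides of the origin, and one cannot pass from one to the other without either leaving the minimal length chambers or crossing the codimension-$\ge 2$ locus excluded from $V_{\fH_0}^{subreg}$.

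The main obstacle is this last disconnection statement in arbitrary rank. To carry it out I would first pin down $\fC_{\tw}(\fH_0)$ using the gradient-flow description of $f_{\tw}(v)=\|\tw(v)-v\|^2$ (the finite avatar of Lemma \ref{Dr} and Proposition \ref{red1}): minimal length chambers are precisely those whose closures meet $V^{\tw}$ in regular points and sit in $\tw$-stable components of the sub-arrangement $\{H\in\fH_0;\,V^{\tw}\subset H\}$. I would then show that the oblique hyperplane $H_{s_0}$ separates this region along $P$: any path inside $V_{\fH_0}^{subreg}\cap\bigl(\cup_{A}\bar A\bigr)$ joining the two sides of $P$ in $V^{\tw}$ must meet $H_{s_0}$ at a point that also lies on a second hyperplane of $\fH_0$ (because $P\subset V^{\tw}$ is contained in every hyperplane of the sub-arrangement), contradicting subregularity. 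Establishing this separation cleanly, rather than through a case analysis, is the crux; everything else is formal.
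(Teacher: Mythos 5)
Your first half (weakly elliptic $\Rightarrow$ nice) is essentially the paper's own proof: write $x=x_1x_2$ with $x_1\in W_J$, $x_2\in W_{S_0-J}$, handle $x_1$ by niceness of the elliptic class of $\tw$ in $W_J\rtimes\<\s\>$ (\cite[Corollary 4.4]{HN}) and $x_2$ by trivial conjugation steps; your reformulation of weak ellipticity via Lemma \ref{commute} is also correct. The problem is the converse. You argue by contraposition through the geometric criterion $\S$\ref{ccon} (a), and then the entire content of the implication becomes the disconnectedness of $(\cup_{A \in \fC_{\tw}(\fH_0)} \bar A) \cap V_{\fH_0}^{subreg}$, which you do not prove --- you say yourself that establishing the separation ``is the crux.'' Worse, the intermediate description you propose to use is false: minimal length chambers need not sit in $\tw$-stable components of the sub-arrangement $\{H \in \fH_0;\, V^{\tw} \subset H\}$. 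For $\tw=s_1$ in type $A_2$ --- your own motivating example --- that sub-arrangement is $\{H_{s_1}\}$ and $s_1$ interchanges the two components it defines, so there is no $\tw$-stable component at all, although minimal length chambers certainly exist; in general such chambers lie in components minimizing the number of hyperplanes containing $V^{\tw}$ that separate $C$ from $\tw C$, and this minimum can be positive. Your separation sketch also tacitly assumes that a connecting path crosses $H_{s_0}$ at a point of $H_{s_0}\cap V^{\tw}$; a path inside the minimal-chamber region could a priori cross $H_{s_0}$ far from $V^{\tw}$, at a point lying on no second hyperplane, so subregularity yields no contradiction there.

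The idea you are missing, which makes the paper's proof of this direction a few lines and purely algebraic, is to test niceness against the longest element $w_0$ of $W_0$. Conjugation by $w_0$ preserves length (it permutes $S_0$), so $w_0 \in W_{\tw}$ for any $\tw \in \co_{\min}$, and surjectivity of $\tau_{\tw}$ yields a sequence $(s_1,\cdots,s_r)$ of length-preserving conjugations with $s_1\cdots s_r=w_0$. Since $w_0$ lies in no proper standard parabolic subgroup, every simple reflection of $S_0$ occurs among the $s_j$. By \cite[Lemma 7.4]{He1} the support is constant along such a path, so every step by a reflection $t \notin \supp(\tw)$ must be a trivial conjugation $t\tw_j t=\tw_j$; Lemma \ref{commute} then forces $t=\s(t)$ to commute with every element of $\supp(\tw)$ and with $\tw$. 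Hence every $t\in S_0-\supp(\tw)$ commutes with $\tw$, which is exactly condition (1) of Proposition \ref{well}, i.e., $\co$ is weakly elliptic --- no chamber geometry is needed.
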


\begin{proof}
Suppose that $\co$ is nice. Let $\tw \in \co_{\min}$ and $S'_0=\supp(\tw)$. Then $w_0 \tw w_0 \in \co_{\min}$, where $w_0$ is the largest element of $W_0$. Hence there is a reduced expression $w_0=s_1 s_2 \cdots s_r$ such that $\tw_0 \approx \tw_1 \approx \cdots \approx \tw_r$, where $\tw_i=(s_1 \cdots s_i) \i \tw (s_1 \cdots s_i)$. By \cite[Lemma 7.4]{He1}, $\supp(\tw_i)=\supp(\tw)=S'_0$ for all $i$. Therefore $s_{j+1} \tw_j s_{j+1}=\tw_j$ if $s_{j+1} \notin S'_0$. By Lemma \ref{commute}, $s t=t s$ and $\tw t=t \tw$ for all $s \in S'_0$ and $t \in S_0-S'_0$. Hence $\co$ is weakly elliptic.

Suppose that $\co$ is weakly elliptic. Let $\tw \in \co_{\min}$ and $J=\supp(\tw)$. Then $J$ and $S_0-J$ are unions of connected components of the Dynkin diagram of $S_0$. In particular, any simple reflection in $J$ commutes with simple reflection in $S_0-J$. Let $x \in W_0$ such that $\ell(x \tw x \i)=\ell(\tw)$. We may write $x$ as $x_1 x_2$ for $x_1 \in W_J$ and $x_2 \in S_0-J$. Then $x \tw x \i=x_1 (x_2 \tw x_2 \i) x_1 \i=x_1 \tw x_1 \i$. By definition, $\tw$ is elliptic in $W_J \rtimes\<\s\>$ for some $\s \in \Om'$. Thus by \cite[Corollary 4.4]{HN}, $x_1$ is in the image of $\t_{\tw}$. Since any simple reflection in $S_0-J$ commutes with $\tw$, $x_2$ is also in the image of $\t_{\tw}$. Thus $\co$ is nice.
\end{proof}

\subsection{}\label{acon} Now we study affine Weyl groups. We keep the notation in $\S$\ref{wgo}. Let $\tW_0=W_0 \rtimes \Om'$. Then $\tW=W_G \rtimes \Om=P \rtimes \tW_0$. For any $\tw \in \tW$, we denote by $\fA_{\tw}$ the set of alcoves $A$ such that $\tw_A$ is of minimal length in its $W$-conjugacy class. We denote by $V^{subreg}$ the set of points in $V$ that is contained in at most one $H$ for $H \in \fH$.  Similar to the proof of \cite[Lemma 4.1]{HN},

(a) A $W$-conjugacy class $\co$ of $\tW$ is nice if and only if $(\cup_{A \in \fA_{\tw}} \bar A) \cap V^{subreg}$ is connected for some (or equivalently, any) $\tw \in \co$.

\subsection{} Let $y \in \tW$. Let $K \subset V_y$ be an affine subspace such that $y K=K$. Choose $p \in K$. Define $\bar y= T_{-\nu_y-p} \circ y \circ T_p \in GL(V)$, where $T_v$ denotes the map of translation by $v$ for $v \in V$. Then $\bar y(0)=0$ and $\bar y$ is the image of $y$ under the map $\tW=P \rtimes \tW_0 \to \tW_0$. In other words, $\bar y$ is the finite part of $y$.

Set $\fH_{K,p}=T_{-p}(\fH_K)$. Then any element in $\fH_{K, p}$ is a hyperplane through $0$ and contains $V^{\bar y}=T_{-p} V_y$. Since $y$ preserve $\fH_K$, $\bar y$ preserves $\fH_{K, p}$.

The following result relates $\S$\ref{ccon} (a) with $\S$\ref{acon} (a).

\begin{lem}\label{key11}
Keep notations as above. Assume $p$ is a regular point of $K$ and $A, A' \in \fA_y$ with $p \in \bar A \cap \bar A'$. Let $C$ (resp. $C'$) be the unique element of $\fC(\fH_{K,p})$ such that $A \subset T_p(C)$ (resp. $A \subset T_p(C')$). Then $A$ and $A'$ are in the same connected component of $(\cup_{A \in \fA_y}\bar A) \cap V^{subreg}$ if and only if $C$ and $C'$ are in the same connected component of  $(\cup_{C \in \fC_{\bar y}(\fH_{K,p})}\bar C) \cap V_{\fH_{K,p}}^{subreg}$.
\end{lem}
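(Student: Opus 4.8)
My plan is to localise everything in a small ball $B=B(p,\e)$ around $p$, where the affine arrangement degenerates to the central arrangement $\fH_{K,p}$, and then to remove the influence of alcoves far from $K$ by contracting towards $p$.

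First I would set up the localisation. Since $p$ is regular in $K$, for $H\in\fH$ we have $p\in H\iff K\subset H\iff H\in\fH_K$, so for small $\e$ the arrangements $\fH$ and $\fH_K$ have the same trace on $B$. Hence any alcove $A$ with $p\in\bar A$ is cut out near $p$ only by the half-spaces of the hyperplanes of $\fH_K$ through $p$, and its tangent cone at $p$ is $\bar C+p$ for a unique chamber $C\in\fC(\fH_{K,p})$ --- the chamber in the statement. This gives a bijection between alcoves with $p\in\bar A$ and chambers of $\fH_{K,p}$ under which $T_{-p}(\bar A\cap B)=\bar C\cap B$, wall-adjacency corresponds to chamber-adjacency, and $T_{-p}(V^{subreg}\cap B)=V_{\fH_{K,p}}^{subreg}\cap B$.

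Next I would match lengths. From $\bar y=T_{-\nu_y-p}\circ y\circ T_p$ one has $y(v)=\bar y(v-p)+\nu_y+p$; since $\nu_y\in V^{\bar y}=T_{-p}V_y\subset\bigcap_{H\in\fH_{K,p}}H$, translation by $\nu_y$ fixes every hyperplane of $\fH_{K,p}$, so the tangent cone of $yA$ is a translate of $\bar y C$ and $H\in\fH_K$ separates $A$ from $yA$ iff $T_{-p}H\in\fH_{K,p}$ separates $C$ from $\bar y C$. Thus $\sharp\fH_K(A,yA)=\sharp\fH_{K,p}(C,\bar y C)=\ell(\bar y_C)$, and since $\bar A$ contains the regular point $p$ of $K$, Proposition \ref{ell} gives $\ell(y_A)=\ell(\bar y_C)+\<\bar\nu_y,2\rho\>$. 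Comparing with the value of the global minimum computed in the proof of Theorem \ref{main}, I obtain $A\in\fA_y\iff C\in\fC_{\bar y}(\fH_{K,p})$ for all alcoves with $p\in\bar A$.

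Because a point of $V^{subreg}$ lies on at most one hyperplane, connectivity in $(\cup_{A\in\fA_y}\bar A)\cap V^{subreg}$ (resp. in the finite set) amounts to joinability by a chain of wall-adjacent $\fA_y$-alcoves (resp. $\fC_{\bar y}$-chambers). The direction ``$C,C'$ connected $\Rightarrow A,A'$ connected'' is then immediate: lift a chamber-chain through the bijection above to a chain inside $B$. The hard part will be the converse, and it is the main obstacle: a priori a chain from $A$ to $A'$ could pass through $\fA_y$-alcoves far from $K$ (such alcoves exist, by the example after Proposition \ref{red1}), creating a connection with no counterpart in the finite model. To defeat this I would first prove the claim that every $A\in\fA_y$ lies in a minimal region of $\fH_K$: writing $\ell(y_A)=\sharp\fH_K(A,yA)+\sharp(\fH(A,yA)\setminus\fH_K)$ with $\sharp\fH_K(A,yA)=\ell(\tilde C)$ for the $\fH_K$-region $\tilde C\supset A$, it suffices to show $\sharp(\fH(A,yA)\setminus\fH_K)\ge\<\bar\nu_y,2\rho\>$, which follows as in Lemma \ref{f1} from $y^n=t^{n\nu_y}$ (whence $\fH_K(A,y^nA)=\emptyset$ and $\sharp\fH(A,y^nA)=n\<\bar\nu_y,2\rho\>+O(1)$) together with the subadditivity $\sharp(\fH(A,y^nA)\setminus\fH_K)\le n\,\sharp(\fH(A,yA)\setminus\fH_K)$, letting $n\to\infty$; minimality of $\ell(y_A)$ then forces $\tilde C$ minimal. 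Granting the claim, I would contract a connecting path $\g$ by $\rho_t\colon v\mapsto p+t(v-p)$ with $t>0$ small enough that $\rho_t(\g)\subset B$: contraction keeps each point on the same side of every $H\in\fH_K$ and, for small $t$, off every hyperplane not through $p$, so $\rho_t$ sends $(\cup_{\fA_y}\bar A)\cap V^{subreg}$ into itself near $p$ (the claim guarantees that the image alcove, belonging to the minimal region $\tilde C$, is again in $\fA_y$) and preserves $V^{subreg}$. Then $T_{-p}\rho_t(\g)$ is a path in $(\cup_{C\in\fC_{\bar y}}\bar C)\cap V_{\fH_{K,p}}^{subreg}$ from $C$ to $C'$, completing the converse.
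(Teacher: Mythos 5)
Your proposal is correct, but the crucial ``only if'' direction is argued by a genuinely different route than the paper's. The paper takes the given gallery $A=A_1,\dots,A_r$ in $\fA_y$ (which, exactly as you observe, may wander far from $K$) and projects it combinatorially onto a gallery of chambers of $\fH_{K,p}$, setting $j_i=\max\{k;\,T_{-p}(A_k)\subset C_i\}$ and letting $C_{i+1}$ be the chamber of $T_{-p}(A_{j_i+1})$; it then shows every chamber of the projected gallery lies in $\fC_{\bar y}(\fH_{K,p})$ by a wall-crossing count: the separation sets of $C_i$ and $C_{i+1}$ differ only inside $\{T_{-p}(H_{j_i}),\bar y(T_{-p}(H_{j_i}))\}$, at most one of which lies in $\fH_{K,p}(C_{i+1},\bar y C_{i+1})$, and the count is anchored by the minimality of $\sharp\fH_{K,p}(C_1,\bar y C_1)$, which comes from $p\in\bar A$, $A\in\fA_y$ and Proposition \ref{ell}. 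You instead (i) prove the global statement that \emph{every} alcove of $\fA_y$ lies in a minimal $\fH_K$-region, via the inequality $\sharp(\fH(A,yA)\setminus\fH_K)\ge\<\bar\nu_y,2\rho\>$ obtained from the $y^n=t^{n\nu_y}$ subadditivity trick (in the spirit of Lemma \ref{f1}), and then (ii) contract the connecting path radially into the star of $p$. Your lemma (i) is new relative to the paper's proof and is of independent interest: granted it, the paper's counting argument becomes unnecessary, since the projected gallery automatically stays in minimal chambers. What the paper's version buys is that it is purely combinatorial and avoids the topological bookkeeping your contraction needs (uniform smallness of $t$ over the compact path, the star of $p$, preservation of $V^{subreg}$, convexity of alcoves to keep the endpoints in $\bar A$ and $\bar A'$); what yours buys is a reusable structural fact about $\fA_y$ and a more transparent geometric mechanism.

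Two small repairs. First, your step-2 appeal to ``the global minimum computed in the proof of Theorem \ref{main}'' is literally available there only for $K=V_y$, whereas this lemma is applied with an arbitrary $y$-stable affine subspace $K\subset V_y$ (e.g.\ $K=H\cap V_y$ in Theorem \ref{affinenice}); the inequality needed for general $K$, namely $\ell(y_A)\ge\ell(\tilde C_A)+\<\bar\nu_y,2\rho\>$ for all alcoves, is precisely your claim (i), so that claim should be established before the length-matching step, after which the equivalence $A\in\fA_y\iff C\in\fC_{\bar y}(\fH_{K,p})$ (for $p\in\bar A$) closes up using Proposition \ref{ell} in both directions. Second, the inclusion $T_{-p}V_y\subset\bigcap_{H\in\fH_{K,p}}H$ fails when $K\subsetneq V_y$, since a hyperplane containing $K$ need not contain $V_y$; what you actually need, and what is true, is $\nu_y\in T_{-p}K\subset\bigcap_{H\in\fH_{K,p}}H$, because $y(p)=p+\nu_y\in K$. (The paper's own setup paragraph before the lemma contains the same slip, and its proof likewise only uses the correct weaker fact.)
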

\begin{proof}
Suppose that $C$ and $C'$ are in the same connected component of  $(\cup_{C \in \fC_{\bar y}(\fH_{K,p})}\bar C) \cap V_{\fH_{K,p}}^{subreg}$. Then there is a sequence $C=C_1, C_2, \cdots, C_t=C'$ in $\fC_{\bar y}(\fH_{K,p})$ such that $\bar C_i \cap \bar C_{i+1}$ spans $H_i' \in \fH_{K,p}$ for $i=0,\cdots,t-1$. Note that all the numbers $\sharp \fH_{K,p}(C_i, \bar yC_i)$ are the same. Let $A_i$ be the unique alcove in $T_p(C_i)$ whose closure contains $p$. Then $\bar A_i \cap \bar A_{i+1}$ spans $T_p(H_i') \in \fH_K$ for each $i$. By Proposition \ref{ell}, \begin{align*} \sharp \fH(A_i, yA_i) &=\sharp \fH_K(A_i,yA_i)+\<\bar \nu_y, 2\rho\>=\sharp \fH_{K,p}(C_i, \bar y C_i)+\<\bar \nu_y, 2\rho\>\\ &=\sharp \fH(A, y A).\end{align*} Hence $A_i \in \fA_y$ for all $i$ and all the $A_i$ are in the same connected component of $(\cup_{A \in \fA_y}\bar A) \cap V^{Subreg}$.

Suppose that $A$ and $A'$ are in the same connected component of $(\cup_{A \in \fA_y}\bar A) \cap V^{subreg}$. There is a sequence of alcoves $A=A_1, A_2, \cdots, A_r=A'$ in $\fA_y$ such that $A_i$ and $A_{i+1}$ share a common face which spans $H_i \in \fH$ for all $i$. By Proposition \ref{ell}, $C, C' \in \fC_{\bar y}(\fH_{K,p})$. Now we define a sequence of chambers in $\fC_{\bar y}(\fH_{K,p})$ as follows.  Let $C_1=C$. Assume that $C_i$ is already defined for $i \geq 1$. Let $j_i=\max \{k; T_{-p}(A_k) \subset C_i\}$. Let $C_{i+1}$ be the unique connected component containing $T_{-p}(A_{j_i+1})$. We obtain a sequence $C=C_1, \cdots, C_s=C'$ in this way.

Notice that $$\fH(A_{j_i}, y A_{j_i})-\{H_{j_i},y H_{j_i}\} \subset \fH(A_{j_i+1}, y A_{j_i+1}) \subset \fH(A_{j_i}, y A_{j_i}) \cup \{H_{j_i},y H_{j_i}\}.$$ Since $A_{j_i}, A_{j_i+1} \in \fA_y$, $\sharp \fH(A_{j_i}, y A_{j_i})=\sharp \fH(A_{j_i+1}, y A_{j_i+1})$ and $\{H_{j_i},y H_{j_i}\} \cap \fH(A_{j_i+1}, y A_{j_i+1})$ consists of at most one element. Hence $$\{T_{-p}(H_{j_i}),T_{-p}(y H_{j_i})=\bar y(T_{-p}(H_{j_i}))\} \cap \fH_{K,p}(C_{i+1}, \bar y C_{i+1})$$ consists of at most one element. Notice that $\sharp \fH_{K,p}(C_1, \bar y C_1)$ is minimal among all the chambers in $\fC(\fH_{K, p})$. Thus $$\sharp \fH_{K, p}(C_1, \bar y C_1)=\sharp \fH_{K, p}(C_2, \bar y C_2)=\cdots=\sharp \fH_{K, p}(C_s, \bar y C_s)$$ and $C_1, \cdots, C_s \in \fC_{\bar y}(\fH_{K,p})$. By our construction $\bar C_i \cap \bar C_{i+1}$ spans $T_{-p}(H_{j_i})$ for $i=0, \cdots, s-1$. So $C_1, \cdots, C_s$ are in the same connected component of $(\cup_{C \in \fC_{\bar y}(\fH_{K,p})}\bar C) \cap V_{\fH_{K,p}}^{subreg}$.
\end{proof}

\

Now we classify nice conjugacy classes for affine Weyl groups.

\begin{thm}\label{affinenice}
Let $\co$ be a $W$-conjugacy class of $\tW$ and $\co \subset \tW_G \s$ for $\s \in \Om'$. Then the following conditions are equivalent:

(1) $\co$ is nice.

(2) For some (or equivalently, any) $y \in \co$, $s_H(V_y)=V_y$ for any $H \in \fH(\nu_y)$ with $H \cap V_y \neq \emptyset$.

(3) For some (or equivalently, any) $y \in \co$ with $\nu_y=\nu_\co$, $\bar y$ is a weakly elliptic element in $W_{J_\co} \rtimes\<\s\>$.\end{thm}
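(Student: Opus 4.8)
The plan is to prove $(2)\Leftrightarrow(3)$ by a direct geometric translation through Proposition \ref{well}, and $(1)\Leftrightarrow(2)$ by feeding the affine niceness criterion \S\ref{acon}(a) into the finite classification of Theorem \ref{lusztig} via Lemma \ref{key11}. Throughout I would fix, using Proposition \ref{cri} together with \ref{red1} and \ref{ell}, a representative $y\in\co_{\min}$ with $\nu_y=\nu_\co$ dominant. Writing $y=t^\chi\bar y$ with $\bar y=w\s$, the argument of \ref{cri} gives $w\in W_{J_\co}$ and $\s(J_\co)=J_\co$, so $\bar y\in W_{J_\co}\rtimes\<\s\>$; moreover the direction of the affine subspace $V_y$ is exactly the linear fixed space $V^{\bar y}\supseteq\RR\nu_\co$, and $\fH(\nu_y)=\{H_{\a,k};\ \a\in\Phi_{J_\co},\ k\in\ZZ\}$, since $\<\nu_\co,\a\>=0$ precisely for $\a\in\Phi_{J_\co}$.

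For $(2)\Leftrightarrow(3)$ I would compare, for a root $\a\in\Phi_{J_\co}$, the affine reflection $s_{H_{\a,k}}$ acting on $V_y$ with the linear reflection $s_\a$ acting on $V^{\bar y}$. Writing a point of $V_y$ as $x_0+u$ with $u\in V^{\bar y}$, one finds two cases: if $\a\perp V^{\bar y}$ then $s_\a$ fixes $V^{\bar y}$ pointwise and any $H_{\a,k}$ meeting $V_y$ contains it, so both sides hold automatically; while if $\a\not\perp V^{\bar y}$ then both $s_\a V^{\bar y}=V^{\bar y}$ and $s_{H_{\a,k}}V_y=V_y$ are equivalent to $\a^\vee\in V^{\bar y}$. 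Since every $\a\not\perp V^{\bar y}$ admits some $H_{\a,k}$ meeting $V_y$ (as $\<\cdot,\a\>$ is then non-constant on $V_y$), no root is lost, and condition $(2)$ is equivalent to $s_\a V^{\bar y}=V^{\bar y}$ for all $\a\in\Phi_{J_\co}$. This is exactly condition $(3)$ of Proposition \ref{well} for the $W_{J_\co}$-class of $\bar y$ in $W_{J_\co}\rtimes\<\s\>$, which by that proposition is equivalent to $\bar y$ being weakly elliptic, i.e. to $(3)$.

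For $(1)\Leftrightarrow(2)$ I would use \S\ref{acon}(a): $\co$ is nice iff $(\cup_{A\in\fA_y}\bar A)\cap V^{subreg}$ is connected. Every alcove of $\fA_y$ has its closure meeting $K=V_y$ in a regular point (Theorem \ref{main}, Proposition \ref{ell}), so these alcoves sit over the $K$-alcove decomposition of $V_y$ and over the components of $V-\cup_{H\in\fH_K}H$. Applying Lemma \ref{key11} with $K=V_y$, every hyperplane of the arrangement $\fH_{K,p}$ contains $V^{\bar y}$, whence $V^{\bar y}\subseteq V^{W_K}$ and the induced finite class of $\bar y$ is elliptic, hence nice; thus the alcoves of $\fA_y$ sharing a given regular point of $V_y$ form a single connected piece. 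This collapses the transverse clusters and reduces the question to connectivity across the walls $H\cap V_y$. A wall with $\a\notin\Phi_{J_\co}$ satisfies $y(H\cap V_y)\neq H\cap V_y$, since $\<\nu_\co,\a\>\neq0$ pushes it off itself, so Proposition \ref{con} makes it freely (length-preservingly) crossable, exactly as in the proof of Theorem \ref{cyclic}; hence only the walls with $\a\in\Phi_{J_\co}$ can obstruct, and the residual connectivity should reproduce the finite niceness criterion \S\ref{ccon}(a) for the $\bar y$-class in $W_{J_\co}\rtimes\<\s\>$. By Theorem \ref{lusztig} the latter holds iff $\bar y$ is weakly elliptic, hence iff $(2)$.

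The hard part will be this last identification. Once Lemma \ref{key11} has collapsed the transverse clusters and Proposition \ref{con} has eliminated the $\a\notin\Phi_{J_\co}$ walls, I must show that the remaining connectivity of the good components of $V-\cup_{H\in\fH_K}H$ across the $\Phi_{J_\co}$-walls is faithfully modeled by the chamber geometry of $W_{J_\co}\rtimes\<\s\>$, so that \S\ref{ccon}(a) and Theorem \ref{lusztig} apply. In particular one must check that passing from the arrangement $\fH_{V_y}$ (reflection group $W_{V_y}$) to the full $\Phi_{J_\co}$-arrangement introduces no spurious obstruction, and that the twist $\s\in\Om'$ is carried along correctly, so that affine niceness of $\co$ and finite niceness of the $\bar y$-class genuinely coincide.
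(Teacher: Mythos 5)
Your equivalence $(2)\Leftrightarrow(3)$ is correct and essentially coincides with the paper's argument: both translate, via $V_y=T_p(V^{\bar y})$, the affine condition $s_H(V_y)=V_y$ into the linear condition $s_\a(V^{\bar y})=V^{\bar y}$ for $\a\in\Phi_{J_\co}$, and then invoke Proposition \ref{well}(3); your case division ($\a$ perpendicular to $V^{\bar y}$ or not) is a clean way to organize this. The cluster-collapsing step via Lemma \ref{key11} with $K=V_y$ and the use of Proposition \ref{con} to cross walls with $yP_i\neq P_i$ also match the paper's proof of $(2)\Rightarrow(1)$.

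The gap is exactly where you flag it, and it is not a technicality. Your plan for $(1)\Leftrightarrow(2)$ is to show that affine niceness of $\co$ is \emph{globally} equivalent to finite niceness of the $\bar y$-class in $W_{J_\co}\rtimes\<\s\>$ and then quote Theorem \ref{lusztig}. But Lemma \ref{key11} with $K=V_y$ only compares $\fA_y$-connectivity with the chamber complex of the arrangement $\fH_{V_y,p}$ of hyperplanes \emph{containing} $V_y$, which is in general strictly smaller than the full $\Phi_{J_\co}$-arrangement (for a straight class $\fH_{V_y}=\emptyset$ by Proposition \ref{cri}, while $\Phi_{J_\co}$ need not be empty), so the identification you need is not delivered by any of the cited lemmas, and the paper never proves it. Instead the paper argues \emph{locally, one hyperplane at a time}. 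For $(1)\Rightarrow(2)$: given $H\in\fH(\nu_y)$ with $H\cap V_y\neq\emptyset$, it applies Lemma \ref{key11} with the codimension-one slice $K=H\cap V_y$ (which satisfies $yK=K$); now $\fH_{K,p}$ \emph{does} contain $T_{-p}(H)$, so niceness plus $\S$\ref{ccon}(a), Theorem \ref{lusztig} and Proposition \ref{well}(3) yield $s_{T_{-p}(H)}(V^{\bar y})=V^{\bar y}$, i.e. $s_H(V_y)=V_y$, with no comparison to the global $W_{J_\co}$-geometry. For $(2)\Rightarrow(1)$: at a wall $P_i=H_i\cap V_y$ of the sequence from Lemma \ref{sequence} with $yP_i=P_i$, condition (2) applied to all hyperplanes cutting $V_y$ exactly in $P_i$ forces each of them to be the affine hyperplane through $P_i$ orthogonal to $V_y$; hence $H_i$ is the \emph{unique} hyperplane separating $A_{i-1}$ from $A_i$, and the crossing stays inside $V^{subreg}$. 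These two local arguments are the missing ideas; without them (or an actual proof of your proposed global identification, which would amount to re-deriving the theorem) the proposal does not close.
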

\begin{proof}
(3) $\Rightarrow$ (2). Let $y \in \co$ with $\nu_y=\nu_\co$. Let $p \in H \cap V_y$ and $H'=T_{-p} H$. Then $H' \in \fH_{V_y, p}$. Hence $s_{H'}(V^{\bar y})=V^{\bar y}$. Since $V_y=T_p(V^{\bar y})$, $s_H(V_y)=V_y$.

(2) $\Rightarrow$ (3). Let $H=H_{\a, 0}$ with $\<\nu_y, \a\>=0$. If $V^{\bar y} \subset H$, then $s_H(V^{\bar y})=V^{\bar y}$. Otherwise, $H \cap T_v(V^{\bar y}) \neq \emptyset$ for all $v \in V$. In particular, let $p \in V_y$, then $H$ intersects $V_y=T_p(V^{\bar y})$. By condition (2), $s_H(V_y)=V_y$. Hence $s_H(V^{\bar y})=V^{\bar y}$.

(1) $\Rightarrow$ (2). Let $H \in \fH(\nu_y)$ such that $H \cap V_y \neq \emptyset$. Set $K=H \cap V_y$. Then $K$ is an affine subspace of $V_y$ of codimension at most $1$ and $y K=K$. By Proposition \ref{ell} and Lemma \ref{K-alcove}, there exists an alcove $A \in \fA_y$ whose closure contains a regular point $p$ of $K$. By condition (1) and Lemma \ref{key11}, $(\cup_{C \in \fC_{\bar y}(\fH_{K,p})}\bar C) \cap V_{\fH_{K,p}}^{subreg}$ is connected. By Theorem \ref{lusztig}, we have $s_{T_{-p}(H)}(V^{\bar y})=V^{\bar y}$, that is, $s_H(V_y)=V_y$.

(2) $\Rightarrow (1)$. Let $y \in \co$ with $\nu_y=\nu_\co$. By Proposition \ref{red1}, it suffices to prove the following statement:

{\it Let $A, A' \in \fA_y$ such that $\bar A$ and $\bar A'$ contain regular points of $V_y$. Then $A$ and $A'$ are in the same connected component of $(\cup_{A \in \fA_y}\bar A) \cap V^{subreg}$.}

Let $C$ be the connected component of $V-\cup_{H \in \fH_{V_y}} H$ that contains $A$ and $A''$ be the unique alcove in $C$ such that $\bar A' \cap \bar A''$ contains a regular point $q$ of $V_y$. By condition (3) and $\S$\ref{ccon} (a), $(\cup_{C \in \fC_{\bar y}(\fH_{V_y,q})}\bar C) \cap V_{\fH_{V_y,q}}^{subreg}$ is connected. Hence by Lemma \ref{key11}, $A'$ and $A''$ are in the same connected component of $(\cup_{B \in \fA_y} \bar B) \cap V^{subreg}$.

By Lemma \ref{sequence}, there is a sequence of alcoves $A=A_0, A_1, \cdots A_r=A''$ in $C$ such that $\bar A_i$ contains a regular point of $V_y$ and $\bar A_{i-1} \cap \bar A_i \cap V_y$ spans a codimension one affine subspace $P_i=H_i \cap V_y$ of $V_y$ for $1 \le i \le r$.

If $P_i=yP_i$. Then $H_i \in \fH(\nu_y)$. By condition (2), $s_{H_i}(V_y)=V_y$. Since $V_y \nsubseteq H_i$, $H_i$ is the affine hyperplane containing $P_i$ and orthogonal to $V_y$. Hence $H_i$ is the unique element in $\fH$ whose intersection with $V_y$ is $P_i$ and thus the unique hyperplane separating $A_{i-1}$ from $A_i$. So $A_{i-1}$ and $A_i$ are in the same connected component of $(\cup_{B \in \fA_y}\bar B) \cap V^{subreg}$.

If $y P_i \neq P_i$, then there is a sequence of alcoves $A_{i-1}=B_0, B_1, \cdots, B_s=A_i$ in $C$ such that $B_{k-1}$ and $B_k$ have a common face and $\bar B_{k-1} \cap \bar B_k \cap V_y$ spans $P_i$ for $k=1, \cdots, s$. By Proposition \ref{con}, we see that all $B_k \in \fA_y$. Hence $A_{i-1}$ and $A_i$ are in the same connected component of $(\cup_{B \in \fA_y}\bar B) \cap V^{subreg}$.

Hence $A$ and $A''$ are in the same connected component of $(\cup_{B \in \fA_y}\bar B) \cap V^{subreg}$.
\end{proof}

\begin{cor}
Let $y \in \tW$ such that $\bar y$ is an elliptic element in $\tW_0$. Then the $W$-conjugacy class of $y$ is nice.
\end{cor}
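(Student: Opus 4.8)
The plan is to deduce the corollary from Theorem~\ref{affinenice} by checking its condition (3). Write $\co \subset \tW_G \s$ with $\s \in \Om'$. The first step is to pin down the Newton datum of $\co$. Since $\bar y$ is elliptic in $\tW_0$, its fixed space satisfies $V^{\bar y} \subseteq V^{W_0} = \{0\}$, so $V^{\bar y} = \{0\}$. The finite part of $y$ fixes the Newton point, i.e. $\bar y(\nu_y) = \nu_y$, whence $\nu_y \in V^{\bar y} = \{0\}$ and $\nu_y = 0$. As the dominant Newton point is a $W$-conjugacy invariant, $\nu_\co = 0$, and therefore $J_\co = \{i \in S_0; \<\nu_\co, \a_i\> = 0\} = S_0$ and $W_{J_\co} = W_0$.

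The second step is to verify the weak ellipticity required by condition (3). Because $\nu_\co = 0$, every $y' \in \co$ already satisfies $\nu_{y'} = 0 = \nu_\co$, so it suffices to check that $\bar{y'}$ is weakly elliptic in $W_{J_\co} \rtimes \<\s\> = W_0 \rtimes \<\s\>$ for one such $y'$. Here I would use that ellipticity of the finite part is a $W$-conjugacy invariant: conjugating $y$ by $t^\l u$ with $u \in W_0$ replaces $\bar y$ by $u \bar y u\i$ (a short computation in $P \rtimes \tW_0$, as in the proof of Theorem~\ref{straight}), so $\bar{y'}$ is $W_0$-conjugate to $\bar y$ and hence elliptic, i.e. $\supp(\bar{y'}) = S_0$. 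An elliptic element is automatically weakly elliptic: with $\supp(\bar{y'}) = S_0$, condition (1) of Proposition~\ref{well} holds vacuously, since $S_0 - \supp(\bar{y'}) = \emptyset$ leaves nothing with which to commute. Thus condition (3) of Theorem~\ref{affinenice} holds and $\co$ is nice.

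Alternatively, one could verify condition (2) directly, which is arguably even cleaner: since $\nu_y = 0$ and $V^{\bar y} = \{0\}$, the set $V_y$ is a single point $p$ with $y(p) = p$, and for any $H \in \fH(\nu_y) = \fH$ meeting $V_y$ one has $p \in H$, so $s_H$ fixes $p$ and $s_H(V_y) = V_y$ trivially. The only points needing genuine care are the identifications $\nu_\co = 0$ (hence $J_\co = S_0$) and the $W$-conjugacy invariance of the ellipticity of the finite part; both are short, and once they are in place the statement is immediate from Theorem~\ref{affinenice}. I do not expect any serious obstacle here, as the corollary is essentially the degenerate ``$J_\co = S_0$'' case of the classification already established.
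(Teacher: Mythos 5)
Your overall strategy is exactly the intended one: as in the paper, the corollary is meant to be an immediate consequence of Theorem \ref{affinenice}, and both of your routes (via condition (3) or condition (2)) are the right ones. However, your very first step makes a claim that fails in the generality in which Sections 3 and 4 of the paper work. There $\tW=P\rtimes\tW_0$, where $P$ is the coweight lattice of a connected \emph{reductive} group $G$; accordingly $V=P\otimes_\ZZ\RR$ and $V^{W_0}=N=\{v\in V;\ \<v,\a\>=0 \text{ for all }\a\in\Phi\}$, which need not be $\{0\}$. Ellipticity of $\bar y$ only gives $V^{\bar y}\subseteq V^{W_0}=N$ (this is the criterion of \cite[Lemma 7.2]{He1} quoted in \S\ref{ccon}), not $V^{\bar y}=\{0\}$. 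Concretely, for $G=GL_n$, so $\tW_G=\ZZ^n\rtimes W_0$ with $W_0$ the symmetric group, take $y=t^{(1,0,\dots,0)}c$ with $c$ an $n$-cycle: its finite part is a Coxeter element, hence elliptic, yet $\nu_y=(1/n,\dots,1/n)\neq0$ and $V_y$ is a line, not a point. (This is precisely the kind of superbasic element the surrounding results are designed to cover.) So your assertions $\nu_y=0$, $\nu_\co=0$, and ``$V_y$ is a single point'' are wrong as stated, although your proof is correct verbatim when $G$ is semisimple, e.g.\ when $\tW\subseteq W\rtimes\Aut(W,S)$.

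The gap is easily repaired, and the repair shows why your plan still works. From $\nu_y\in V^{\bar y}\subseteq N$ you get $\<\nu_y,\a\>=0$ for every $\a\in\Phi$; hence $\nu_y$ is automatically dominant, so $\nu_y=\nu_\co$, and $J_\co=\{i\in S_0;\ \<\nu_\co,\a_i\>=0\}=S_0$ still holds. Moreover $\nu_y$ is fixed by $W_0$, so every element of $\co$ has the same Newton point. Taking $y'=y$ itself (your invariance-of-ellipticity digression is not needed, since condition (3) only asks for ``some'' element with $\nu_{y'}=\nu_\co$), ellipticity of $\bar y$ in $W_0\rtimes\<\s\>$ gives weak ellipticity by Proposition \ref{well}, so condition (3) of Theorem \ref{affinenice} holds and $\co$ is nice. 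Your alternative route through condition (2) is repaired the same way: $V_y$ is a translate of $V^{\bar y}\subseteq N$, and for any $H=H_{\a,k}\in\fH(\nu_y)$ meeting $V_y$ the reflection $s_H$ fixes $V_y$ pointwise, because $\<v,\a\>=0$ for all $v\in V^{\bar y}$; hence $s_H(V_y)=V_y$.
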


\

Now we classify straight nice conjugacy classes.

\begin{prop}
Let $\co$ be a straight $W$-conjugacy class of $\tW$ and $\co \subset \tW_G \s$ with $\s \in \Om'$. Then $\co$ is nice if and only if there exists $x \in \co$ such that $\nu_x=\nu_\co$ and $x$ is superbasic in $\tW_J$, where $J \subset J_{\co}$ is a union of connected components of Dynkin diagram of $J_{\co}$ and $\s$ fixes each element of $J_\co-J$.
\end{prop}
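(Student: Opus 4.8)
The plan is to combine the classification of nice classes in Theorem \ref{affinenice} with the description of superstraight classes in Proposition \ref{superstraight}, the bridge being that ``superbasic in $\tW_J$'' and ``weakly elliptic finite part'' are two faces of the same condition: the finite part is elliptic on a union $J$ of connected components of $J_{\co}$ while $\s$ fixes the complement. Throughout I fix a representative with dominant Newton point. Since $\co$ is straight, Proposition \ref{cri} supplies a basic element $x$ of $\tW_{J_{\co}}$ lying in $\co$ with $\nu_x=\nu_{\co}$, and by Theorem \ref{affinenice} niceness of $\co$ is equivalent to $\bar x$ being weakly elliptic in $W_{J_{\co}}\rtimes\<\s\>$. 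The whole argument is then run inside the parabolic $\tW_J$, regarded as the extended affine Weyl group attached to the root subsystem $\Phi_J$.

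For the ``if'' direction, suppose $x\in\co$ is superbasic in $\tW_J$ with $\nu_x=\nu_{\co}$, where $J\subset J_{\co}$ is a union of connected components of the Dynkin diagram of $J_{\co}$ and $\s$ fixes $J_{\co}-J$ pointwise. Applying the implication (3)$\Rightarrow$(2) of Proposition \ref{superstraight} inside $\tW_J$ shows that $V_x$ meets no hyperplane $H_{\a,k}$ with $\a\in\Phi_J$; since $\<\cdot,\a\>$ is affine-linear along $V_x$ and would otherwise be surjective onto $\RR$, this forces $\<\cdot,\a\>$ to be constant on $V_x$, i.e. the linear space $V^{\bar x}$ is orthogonal to every root of $\Phi_J$, so $\bar x$ is elliptic in $W_J\rtimes\<\s\>$ with $\supp(\bar x)=J$. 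Because $J$ is a union of connected components of $J_{\co}$ and $\s$ fixes $J_{\co}-J$, Lemma \ref{commute} shows $\bar x$ commutes with every element of $J_{\co}-J$, so $\bar x$ is weakly elliptic in $W_{J_{\co}}\rtimes\<\s\>$ by Proposition \ref{well}. Theorem \ref{affinenice} ((3)$\Rightarrow$(1)) then gives that $\co$ is nice.

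For the ``only if'' direction, assume $\co$ is nice. By Theorem \ref{affinenice} the finite part $\bar x$ of the basic representative $x\in\tW_{J_{\co}}$ is weakly elliptic in $W_{J_{\co}}\rtimes\<\s\>$. Set $J=\supp(\bar x)$. By Proposition \ref{well}, $J$ is a union of connected components of $J_{\co}$, $\bar x$ is elliptic in $W_J\rtimes\<\s\>$, and $\bar x$ commutes with $J_{\co}-J$; Lemma \ref{commute} then yields $\s(s)=s$ for all $s\in J_{\co}-J$. Writing $\bar x=w\s$ we have $w\in W_J$ and $\s(J)=J$, so $x=t^\chi w\s\in\tW_J$, and since $\Phi_J\subset\Phi_{J_{\co}}$ the length of $x$ in $\tW_J$ is at most its length $0$ in $\tW_{J_{\co}}$; thus $x$ is basic in $\tW_J$. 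I then verify condition (2) of Proposition \ref{superstraight} for the $\tW_J$-conjugacy class $\co_J$ of $x$: ellipticity of $\bar x$ in $W_J$ makes $\<\cdot,\a\>$ constant on $V_x$ for each $\a\in\Phi_J$, while straightness of $\co$ together with the criterion $V_x\nsubseteq H$ of Proposition \ref{cri}(2) forbids $V_x\subset H_{\a,k}$; hence that constant value is non-integral and $V_x\cap H_{\a,k}=\emptyset$ for all $\a\in\Phi_J$ and $k\in\ZZ$. Thus $\co_J$ is superstraight in $\tW_J$, and the implication (1)$\Rightarrow$(3) of Proposition \ref{superstraight}, applied inside $\tW_J$, produces a superbasic element of $\tW_J$ lying in $\co_J\subset\co$ with Newton point $\nu_{\co}$, as required.

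The main obstacle is the passage through the parabolic subgroup $\tW_J$: one must check that Propositions \ref{cri}, \ref{well} and \ref{superstraight} may legitimately be applied to $\tW_J$ in place of $\tW$, and that the intrinsic data $V_x$, $\nu_x$ and $\fH(\nu_x)$ computed in $\tW$ match their $\tW_J$-counterparts (here $J_{\co_J}=J$, since $\nu_x\perp\Phi_J$). The genuinely delicate point is the non-integrality of $\<p,\a\>$ for $p\in V_x$ and $\a\in\Phi_J$: ellipticity of $\bar x$ alone only makes this quantity constant along $V_x$, and it is precisely the straightness hypothesis, via $V_x\nsubseteq H$ in Proposition \ref{cri}(2), that upgrades ``constant'' to ``non-integral'' and thereby secures condition (2) of Proposition \ref{superstraight}.
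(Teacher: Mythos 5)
Both directions of your argument follow the paper's own proof in substance. For the ``if'' direction the paper verifies condition (2) of Theorem \ref{affinenice} directly (reflection-invariance of $V_x$ under $s_H$ for $H\in\fH(\nu_x)$ meeting $V_x$), while you pass through condition (3) (weak ellipticity of $\bar x$); the two routes use the same ingredients and yours is correct. The first half of your ``only if'' direction --- weak ellipticity via Theorem \ref{affinenice}, $J=\supp(\bar x)$ via Proposition \ref{well} and Lemma \ref{commute}, and the key step that $V_x$ is parallel to $V^{\bar x}\subset H_{\a,0}$ while $V_x\nsubseteq H_{\a,k}$ by Proposition \ref{cri}, so $V_x\cap H_{\a,k}=\emptyset$ --- is exactly the paper's argument.

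The genuine gap is your final step. Proposition \ref{superstraight} is a statement about conjugacy classes under the \emph{extended} group, so ``applied inside $\tW_J$'' it concerns conjugation by $P\rtimes W_J$, and its conclusion (3) only produces a superbasic element $x'$ lying in the $(P\rtimes W_J)$-conjugacy class of $x$ (here $y\in W_J^J=\{1\}$). Your assertion that this element lies in ``$\co_J\subset\co$'' is unjustified: $\co$ is a $W$-conjugacy class with $W=Q\rtimes W_0$, and conjugation by a translation $t^\mu$, $\mu\in P$, need not preserve $\co$; it shifts the image of the $W$-coset of $x$ in $P/Q$ by $\mu-\s(\mu)+Q$, which is nonzero for suitable $\mu$ whenever $\s\neq 1$ (e.g.\ type $\tilde A_2$ with the diagram involution), whereas $W$-conjugation preserves $W$-cosets. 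So the element you produce need not lie in $\co$, while the statement demands $x\in\co$. (The same extended-versus-nonextended mismatch is present in the paper's citation, but the paper's conclusion is immune to it because it is about $x$ itself.) The paper closes the argument by proving that $x$ \emph{itself} is superbasic: having verified condition (2) for the class of $x$ in $\tW_J$, one runs the argument inside the proof of (1)$\Rightarrow$(3) on the basic element $x$ --- if some $x$-orbit $O$ of affine simple reflections of $Q_J\rtimes W_J$ were not a union of connected components, then $s_jx$ ($j\in O$) and $x$ would lie in the same fiber of $f$ but, by the length-parity argument, in different classes, contradicting superstraightness. Alternatively, your version can be repaired: two basic elements of $\tW_J$ that are conjugate under $P\rtimes W_J$ are conjugate by a length-zero element $\omega$ of $P\rtimes W_J$ (write the conjugator as $a\omega$ with $a\in Q_J\rtimes W_J$ and use that $Q_J\rtimes W_J$ is normal in $\tW_J$ and meets the stabilizer of the fundamental $\Phi_J$-alcove trivially); since conjugation by $\omega$ permutes the affine simple reflections of $Q_J\rtimes W_J$ preserving the Dynkin diagram, superbasicness of $x'=\omega x\omega\i$ forces superbasicness of $x$, and $x\in\co$ by Proposition \ref{cri}.
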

\begin{proof}
Assume that $x \in \co$ such that $\nu_x=\nu_\co$ and $x$ is superbasic in $\tW_J$, where $J \subset J_{\co}$ is a union of connected components of Dynkin diagram of $J_{\co}$ and $\s$ fixes each element of $J_\co-J$. Let $H=H_{\a, k} \in \fH(\nu_x)$. Then $\<\nu_x, \a\>=0$ and $\a$ is a linear combination of roots in $J_\co$. If $\a$ is a linear combination of roots in $J$, then by Proposition \ref{superstraight}, $H \cap V_x=\emptyset$. If $\a$ is a linear combination of roots in $J_\co-J$, then $s_H (V_x)=V_x$.  By Theorem \ref{affinenice}, $\co$ is nice.

Assume that $\co$ is nice. By Proposition \ref{cri} and Theorem \ref{affinenice}, there exists a basic element $x \in \tW_{J_\co}$ such that $\nu_x=\nu_\co$ and $\bar x$ is weakly elliptic in $W_{J_\co} \rtimes \<\s\>$. Set $J=\supp(\bar x)$. Then $J$ is a union of connected components of Dynkin diagram of $J_\co$ and $\s$ fixes each element of $J_\co-J$. Let $H=H_{\a,k}$ such that $\a$ is a linear combination of roots in $J$. Since $V^{\bar x} \subset V^{W_J}$, we have that $V^{\bar x} \subset H_{\a, 0}$. By Proposition \ref{cri}, $V_x \nsubseteq H$. Notice that $V_x$ is parallel to $V^{\bar x}$. Thus $V_x \cap H=\emptyset$. By Proposition \ref{superstraight}, $x$ is superbasic in $\tW_J$.
\end{proof}

\begin{cor}
Let $\co$ be a superstraight $W$-conjugacy class of $\tW$. Then $\co$ is nice.
\end{cor}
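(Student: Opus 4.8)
The quickest route is through Theorem \ref{affinenice}, whose condition (2) I would verify vacuously. First I would invoke condition (2) of Proposition \ref{superstraight}: since $\co$ is superstraight, for some (equivalently any) $y \in \co$ we have $H \cap V_y = \emptyset$ for every $H \in \fH(\nu_y)$, where $\fH(\nu_y) = \{H_{\a, k} \in \fH; \<\nu_y, \a\> = 0,\ k \in \ZZ\}$.

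Next I would compare this with condition (2) of Theorem \ref{affinenice}, which requires that $s_H(V_y) = V_y$ for every $H \in \fH(\nu_y)$ with $H \cap V_y \neq \emptyset$. By the previous step there is no hyperplane $H \in \fH(\nu_y)$ meeting $V_y$, so the family over which Theorem \ref{affinenice}(2) quantifies is empty and the condition holds vacuously. By the equivalence (2) $\Leftrightarrow$ (1) of Theorem \ref{affinenice}, I conclude that $\co$ is nice.

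Alternatively, one can argue through the preceding proposition on straight nice classes: a superstraight class is in particular straight, and Proposition \ref{superstraight}(3) furnishes an element $x \in \co$ with $\nu_x = \nu_\co$ that is superbasic in $\tW_{J_\co}$; taking $J = J_\co$ (so that $J_\co - J = \emptyset$, the condition on $\s$ is vacuous, and $J$ is trivially a union of connected components of the Dynkin diagram of $J_\co$) meets the hypotheses of that proposition and again yields that $\co$ is nice. Since both routes reduce to a vacuous instance of a previously established criterion, there is no genuine obstacle here; the only point needing care is the bookkeeping of quantifiers, namely confirming that $\fH(\nu_y)$ contains no hyperplane meeting $V_y$ so that the relevant condition is satisfied trivially.
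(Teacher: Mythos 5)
Your proof is correct, and both of your routes work. The paper states this corollary without any explicit proof, placing it immediately after the proposition characterizing straight nice classes, so its intended argument is exactly your alternative route: a superstraight class is straight, Proposition \ref{superstraight}(3) supplies a superbasic element $x$ of $\tW_{J_\co}$ with $\nu_x=\nu_\co$ and $y x y\i\in\co_{\min}$ for some $y\in W_G^{J_\co}\subset W$ (so that $x$ itself lies in $\co$), and taking $J=J_\co$ makes the remaining hypotheses of that proposition hold vacuously. Your primary route is a mild shortcut that bypasses the straight-nice proposition altogether: Proposition \ref{superstraight}(2) says no hyperplane of $\fH(\nu_y)$ meets $V_y$, so condition (2) of Theorem \ref{affinenice} is satisfied vacuously and niceness follows; this is equally valid and does not even require the observation that superstraight implies straight. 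The only point of imprecision---already present in the paper itself---is that superstraightness is defined for $\tW_G$-conjugacy classes while the corollary and Theorem \ref{affinenice} concern $W$-conjugacy classes; your argument, like the paper's, applies Proposition \ref{superstraight} to the class at hand under the intended reading, so this is not a gap attributable to you.
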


\section{Class polynomial}

\subsection{} Set $\ca=\ZZ[v, v \i]$. The Hecke algebra $H$ associated to $W$ is the associated $\ca$-algebra with basis $T_w$ for $w \in W$ and the multiplication is given by
\begin{gather*} T_x T_y=T_{x y}, \quad \text{ if } \ell(x)+\ell(y)=\ell(x y); \\ (T_s-v)(T_s+v \i)=0, \quad \text{ for } s \in S. \end{gather*}

Then $T_s \i=T_s-(v-v \i)$ and $T_w$ is invertible in $H$ for all $w \in W$. If $\d$ is an automorphism of $W$ with $\d(S)=S$, then $T_w\mapsto T_{\d(w)}$ induces an $A$-linear automorphism of $H$ which is still denoted by $\d$. The Hecke algebra associated to $\tW=W \rtimes \Om$ is defined to be $\tH=H\rtimes \Om$. It is easy to see that $\tH$ is the associated $\ca$-algebra with basis $T_w$ for $w \in \tW$ and multiplication is given by
\begin{gather*} T_{\tx} T_{\ty}=T_{\tx \ty}, \quad \text{ if } \ell(\tx)+\ell(\ty)=\ell(\tx \ty); \\ (T_s-v)(T_s+v \i)=0, \quad \text{ for } s \in S. \end{gather*}

Let $h, h' \in \tH$, we call $[h, h']=h h'-h'h$ the {\it commutator} of $h$ and $h'$. Let $[\tH, \tH]$ be the $\ca$-submodule of $\tH$ generated by all commutators.

For finite Hecke algebras, Geck and Pfeiffer introduced class polynomials in \cite{GP}. We'll show in the section that their construction can be generalized to affine Hecke algebra based on Theorem \ref{main}.

\begin{lem}\label{e}
Let $\tw, \tw' \in \tW$ with $\tw \tilde \sim \tw'$. Then $$T_{\tw} \equiv T_{\tw'} \mod [\tH, \tH].$$
\end{lem}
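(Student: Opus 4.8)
The plan is to decompose the relation $\tw \tilde\sim \tw'$ into its generating moves and check that each one preserves the residue class in $\tH/[\tH,\tH]$. By definition $\tw \tilde\sim \tw'$ means $\tw \sim \d\tw'\d\i$ for some $\d \in \Om$, and $\sim$ is a finite chain of elementary strong conjugacies. Since congruence modulo $[\tH,\tH]$ is transitive, it is enough to verify the congruence for (i) a single elementary strong conjugacy step and (ii) conjugation by a length-zero element $\d \in \Om$.

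The common mechanism is the identity $a b a\i - b = [a, b a\i] \in [\tH,\tH]$, valid for any invertible $a$ and any $b$ in $\tH$; recall that every $T_w$ is invertible, so the conjugating factors below all lie in $\tH^\times$. Thus in each move I only need to exhibit an invertible $a$ with $T_{\tw'} = a T_{\tw} a\i$. For move (ii), using $\ell(\d)=0$ one splits $T_\d T_{\tw'} = T_{\d\tw'}$ and $T_{\d\tw'} = T_{\d\tw'\d\i} T_\d$ (the latter from $\ell(\d\tw'\d\i)+\ell(\d)=\ell(\d\tw')$), giving $T_{\d\tw'\d\i} = T_\d T_{\tw'} T_\d\i$, so $a = T_\d$. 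For move (i), say $\tw' = x\tw x\i$ with $\ell(\tw')=\ell(\tw)$ and $\ell(x\tw) = \ell(x)+\ell(\tw)$: from $\tw'x = x\tw$ and $\ell(\tw'x) = \ell(x\tw) = \ell(x)+\ell(\tw')$ both sides split as $T_{\tw'}T_x = T_{x\tw} = T_x T_{\tw}$, whence $T_{\tw'} = T_x T_{\tw} T_x\i$. The symmetric hypothesis $\ell(\tw x\i) = \ell(x)+\ell(\tw)$ is handled identically after rewriting $\tw x\i = x\i \tw'$ and splitting on the opposite side, producing $a = T_{x\i}\i$.

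The essential point, and the only place care is needed, is that $T_{x\tw x\i}$ does not equal $T_x T_{\tw} T_x\i$ in general, because $T$ is multiplicative only on length-additive products; the length hypotheses built into the definition of elementary strong conjugacy are exactly what license the $T$-splittings above. No step presents a genuine obstacle beyond this bookkeeping of length-additivity, so chaining the congruences along the defining sequence of $\tw \tilde\sim \tw'$ yields $T_{\tw} \equiv T_{\tw'} \mod [\tH,\tH]$.
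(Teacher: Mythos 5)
Your proof is correct and is essentially the paper's own argument: both reduce to a single conjugation step, use the length-additivity hypotheses to split $T$-products and obtain $T_{\tw'}=aT_{\tw}a\i$ for an invertible $a$, and then conclude via the commutator identity $aT_{\tw}a\i-T_{\tw}=[a,T_{\tw}a\i]$. The only (cosmetic) difference is that you treat conjugation by $\d\in\Om$ as a separate explicit move, whereas the paper absorbs it by allowing the conjugating element $x$ to lie in $\tW$ (the length conditions being automatic when $\ell(x)=0$).
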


Proof. By definition of $\tilde \sim$, it suffices to prove the case that there exists $x \in \tW$ such that $\tw'=x \tw x \i$ with $\ell(\tw)=\ell(\tw')$ and either $\ell(x \tw)=\ell(x)+\ell(\tw)$ or $\ell(\tw x \i)=\ell(x)+\ell(\tw)$.

If $\ell(x \tw)=\ell(x)+\ell(\tw)$, then $\ell(\tw' x)=\ell(x \tw)=\ell(\tw')+\ell(\d(x))$. Hence $T_{\tw'} T_{x}=T_{\tw' x}=T_{x \tw}=T_{x} T_{\tw}$ and $T_{\tw'}=T_{x} T_{\tw} T_{x} \i \equiv T_{\tw} \mod [\tH, \tH]$.

If $\ell(\tw x \i)=\ell(x)+\ell(\tw)$, then $\ell(x \i \tw')=\ell(\tw x \i)=\ell(\tw')+\ell(x)$. Hence $T_{x \i} T_{\tw'}=T_{x \i \tw'}=T_{\tw x \i}=T_{\tw} T_{x \i}$ and $T_{\tw'}=T_{x \i} \i T_{\tw} T_{x \i} \equiv [\tH, \tH]$. \qed

\

Now Corollary \ref{maincor} (2) implies that

\begin{cor}
Let $\co$ be a conjugacy class of $\tW$ and $\tw, \tw' \in \co_{\min}$. Then $$T_{\tw} \equiv T_{\tw'} \mod [\tH, \tH].$$
\end{cor}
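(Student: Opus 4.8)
The plan is to reduce the statement directly to the two results that immediately precede it, since all the substantive work has already been carried out. First I would apply Corollary \ref{maincor}(2), which guarantees that any two minimal length elements $\tw, \tw' \in \co_{\min}$ satisfy $\tw \tilde \sim \tw'$; that is, they are strongly conjugate after twisting by a length-zero element of $\Om$. This is the genuine input of the proof, and it rests in turn on the alcove-theoretic reduction of Proposition \ref{red1} together with the partial conjugation argument of Theorem \ref{par}.

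With $\tw \tilde \sim \tw'$ in hand, I would then invoke Lemma \ref{e}, which is stated precisely for the relation $\tilde \sim$ and transports it to the congruence $T_{\tw} \equiv T_{\tw'} \mod [\tH, \tH]$ at the level of the Hecke algebra. Composing these two facts yields the claim in a single line, so the corollary is a formal consequence of the foregoing theory rather than an independent computation.

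The only point meriting attention is that the extra twist by some $\d \in \Om$ that distinguishes $\tilde \sim$ from $\sim$ is properly accounted for. This presents no obstacle: $\d$ acts on $\tH$ by the length-preserving algebra automorphism $T_w \mapsto T_{\d(w)}$, so the corresponding conjugation is trivial on the cocenter $\tH/[\tH, \tH]$, and Lemma \ref{e} already incorporates this step in its statement. Consequently there is no real difficulty here, and I expect the proof to consist solely of citing Corollary \ref{maincor}(2) followed by Lemma \ref{e}.
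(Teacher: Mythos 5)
Your proposal is correct and follows exactly the paper's own argument: the paper deduces this corollary immediately from Corollary \ref{maincor}(2) combined with Lemma \ref{e}, which is precisely your two-step reduction. Your remark about the $\Om$-twist is also sound (and indeed already built into the statement of Lemma \ref{e}), so nothing further is needed.
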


\begin{thm}\label{class}
Let $\tw \in \tW$. Then for any conjugacy class $\co$ of $\tW$, there exists a polynomial $f_{\tw, \co} \in \ZZ[v-v \i]$ with nonnegative coefficient such that $f_{\tw, \co}$ is nonzero only for finitely many $\co$ and \[\tag{a} T_{\tw} \equiv \sum_{\co} f_{\tw, \co} T_{\tw_{\co}} \mod [\tH, \tH],\] where $\tw_\co$ is a minimal length element for each conjugacy class $\co$ of $\tW$.
\end{thm}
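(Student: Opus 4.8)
The plan is to prove the congruence (a) by induction on $\ell(\tw)$. I first note that the right-hand side is well defined modulo $[\tH,\tH]$: by the corollary just above, $T_{\tw_\co}$ does not depend, modulo commutators, on the minimal length representative chosen in $\co$, so the congruence class of $\sum_\co f_{\tw,\co}T_{\tw_\co}$ is determined by the coefficients alone. The base case is when $\tw$ is itself of minimal length in its own class $\co$: that same corollary gives $T_\tw\equiv T_{\tw_\co}\bmod[\tH,\tH]$, and I set $f_{\tw,\co}=1$ and $f_{\tw,\co'}=0$ for $\co'\neq\co$.

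For the inductive step, assume $\tw$ is not of minimal length in its class. The first step is to extract from Corollary \ref{maincor}(1) a single simple reflection that strictly decreases the length. That corollary produces a chain $\tw=\tw_0\xrightarrow{s_1}\tw_1\xrightarrow{s_2}\cdots\xrightarrow{s_n}\tw_n$ with $\tw_n\in\co_{\min}$ and $\ell(\tw_n)<\ell(\tw)$. Taking the first index $k$ at which the length strictly drops, all earlier steps preserve length, so $\tw\approx\tw_{k-1}=:y$ and $s:=s_k$ satisfies $\ell(sys)=\ell(y)-2$. Since the equal length elementary steps are strong conjugations, $\tw\sim y$, and Lemma \ref{e} gives $T_\tw\equiv T_y\bmod[\tH,\tH]$, reducing the problem to $y$.

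The second step is a Hecke algebra computation on $y$. Because $\ell(sys)=\ell(y)-2$ forces $\ell(sy)=\ell(y)-1$, one has $T_y=T_sT_{sy}=T_sT_{sys}T_s$; reducing modulo $[\tH,\tH]$ and invoking the quadratic relation $T_s^2=(v-v\i)T_s+1$ together with $T_{sys}T_s=T_{sy}$ yields
\[
T_y\equiv T_{sys}T_s^2=(v-v\i)\,T_{sy}+T_{sys}\pmod{[\tH,\tH]}.
\]
Both $sy$ and $sys$ have length strictly below $\ell(y)=\ell(\tw)$, so the induction hypothesis expands $T_{sy}$ and $T_{sys}$ in the $T_{\tw_\co}$ with nonnegative $\ZZ[v-v\i]$-coefficients. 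Substituting produces the required expansion for $T_\tw$; the support stays finite because only finitely many classes enter at each stage, and the coefficients stay in $\ZZ[v-v\i]$ with nonnegative coefficients because $v-v\i$ itself does and nonnegativity is preserved under sums and products.

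The genuinely difficult ingredient, that any element can be carried to a minimal length representative by length-nonincreasing conjugations by simple reflections, is exactly Corollary \ref{maincor}(1), which is already established; so the remaining obstacle is organizational rather than deep. One must verify that the first strict length drop along the reduction chain is realized by conjugation by a single simple reflection and that the accompanying identity $\ell(sy)=\ell(y)-1$ holds, so that the quadratic relation applies in the clean two-term form above, and that the equal length steps preceding the drop are genuine strong conjugations, so that Lemma \ref{e} makes the passage from $\tw$ to $y$ free modulo $[\tH,\tH]$. All of these follow from the parity and descent properties of the Coxeter length function.
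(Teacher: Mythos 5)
Your proof is correct and follows essentially the same route as the paper: induction on $\ell(\tw)$, extraction of an element $y\approx\tw$ admitting a strictly length-decreasing conjugation by a simple reflection $s$ (the paper cites Theorem \ref{main} for exactly this), passage from $T_{\tw}$ to $T_y$ via Lemma \ref{e}, and the identical computation $T_y\equiv T_{sys}T_s^2=(v-v\i)T_{sy}+T_{sys}$ modulo $[\tH,\tH]$ followed by the inductive hypothesis, yielding the same recursion $f_{\tw,\co}=(v-v\i)f_{sy,\co}+f_{sys,\co}$.
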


Proof. We argue by induction on $\ell(\tw)$.

If $\tw$ is a minimal element in a conjugacy class of $\tW$, then we set $f_{\tw, \co}=\begin{cases} 1, & \text{ if } \tw \in \co \\ 0, & \text{ if } \tw \notin \co \end{cases}$. In this case, the statement automatically holds.

Now we assume that $\tw$ is not a minimal element in the conjugacy class of $\tW$ that contains it and that for any $\tw' \in \tW$ with $\ell(\tw')<\ell(\tw)$, the statement holds for $\tw'$. By Theorem \ref{main}, there exists $\tw_1 \approx \tw$ and $i \in S$ such that $\ell(s_i \tw_1 s_i)<\ell(\tw_1)=\ell(\tw)$. In this case, $\ell(s_i \tw)<\ell(\tw)$ and we define $f_{\tw, \co}$ as $$f_{\tw, \co}=(v-v \i) f_{s_i \tw_1, \co}+f_{s_i \tw_1 s_i, \co}.$$

By inductive hypothesis, $f_{s_i \tw_1, \co}, f_{s_i \tw_1 s_i, \co} \in \ZZ[v-v \i]$ with nonnegative coefficients. Hence $f_{\tw, \co} \in \ZZ[v-v \i]$ with nonnegative coefficients. Moreover, there are only finitely many $\co$ such that $f_{s_i \tw_1, \co} \neq 0$ or $f_{s_i \tw_1 s_i, \co} \neq 0$. Hence there are only finitely many $\co$ such that $f_{\tw, \co} \neq 0$.

By Lemma \ref{e}, $T_{\tw} \equiv T_{\tw_1} \mod [\tH, \tH]$. Now

\begin{align*}
T_{\tw} & \equiv T_{\tw_1}=T_{s_i} T_{s_i \tw_1 s_i} T_{s_i} \equiv T_{s_i \tw_1 s_i} T_{s_i} T_{s_i} \\ &=(v-v \i) T_{s_i \tw_1 s_i} T_{s_i}+T_{s_i \tw_1 s_i}=(v-v \i) T_{s_i \tw_1}+T_{s_i \tw_1 s_i} \\ & \equiv (v-v \i) \sum_\co f_{s_i \tw_1, \co} T_{\tw_{\co}}+\sum_\co f_{s_i \tw_1 s_i, \co} T_{\tw_{\co}} \\ &=\sum_\co f_{\tw, \co} T_{\tw_\co} \mod [\tH, \tH].
\end{align*}

Hence the statement holds for $\tw$. \qed

\subsection{}\label{c} We call $f_{\tw, \co}$ in the above theorem the {\it class polynomials}.

Notice that the construction of $f_{\tw, \co}$ depends on the choice of the sequence of elements in $S$ used to conjugate $\tw$ to a minimal length element in its conjugacy class. We'll show in the next section that $f_{\tw, \co}$ is in fact, independent of such choice and is uniquely determined by the identity (a) in Theorem \ref{class}. For a finite Hecke algebra, similar result was obtained by Geck and Rouquier in \cite[Theorem 4.2]{GR}.

\section{Cocenter of $\tH$}

\subsection{} We call $\tH/[\tH, \tH]$ the {\it cocenter} of $\tH$. Now for any conjugacy class $\co$ of $\tW$, we choose a minimal length representative $\tw_\co$. Then by Theorem \ref{class}, the image of $T_{\tw_\co}$ (for all conjugacy classes $\co$ of $\tW$) spans the cocenter of $\tH$. The main purpose of this section is to show that the images of $T_{\tw_\co}$ are linearly independent in the cocenter of $\tH$ and thus form a basis. We call it the {\it standard basis} of $\tH/[\tH, \tH]$.

%To do this, we first recall the based ring and its relation with affine Hecke algebras.

\subsection{} Following \cite{L2}, let $J$ be the based ring of $W$ with basis $(t_w)_{w \in W}$. For each $\d\in\Om$, the map $t_w \mapsto t_{\d(w)}$ gives a ring homomorphism of $J$, which we still denote by $\d$. Set $\tJ=J \rtimes \Om$, $J_\ca=J \otimes_\ZZ \ca$ and $\tJ_\ca=\tJ \otimes_\ZZ \ca$.

Let $(c_w)_{w \in W}$ be the Kazhdan-Lusztig basis of $H$. Then $c_x c_y=\sum_{z \in W} h_{x, y, z} c_z$ with $h_{x, y, z} \in \ca$. There is a homomorphisms of $\ca$-algebras $\varphi: H \to J_\ca$ defined by
$c_w \mapsto\sum_{d\in\cd,a(d)=a(x)}h_{w,d,x}t_{x}$,
where $a:\tW\rightarrow\NN$ is Lusztig's $a$-function and $\cd$ is the set of distinguished involutions of $W$. It is easy to see that for each $\d\in\Om$ $$\varphi(\d(c_w))=\varphi(c_{\d(w)})=\sum_{d \in \cd, a(d)=a(x)} h_{\d(w), \d(d), \d(x)} t_{\d(x)}=\d \varphi(c_w).$$ Hence $\varphi$ extends in a natural way to a homomorphism $\tilde \varphi: \tH \to \tJ_\ca$.

\subsection{}\label{Psi}
Set $J_\CC=J \otimes_\ZZ \CC$ and $\tJ_\CC=\tJ \otimes_\ZZ \CC$. Set $H_\CC=H \otimes_{\ca} \CC[v, v \i]$ and $\tH_\CC=\tH \otimes_{\ca} \CC[v, v \i]$. For any $q \in \CC^\times$, let $\CC_q$ be the one-dimemsion $\ca$-module over $\CC$ such that $v$ acts as scalar $q$. Set $H_q=H \otimes_{\ca} \CC_q$ and $\tH_q=\tH \otimes_{\ca} \CC_q$.

Let $E$ be a $\tJ_\CC$-module. Through the homomorphism $$\tilde \varphi_q=\tilde \varphi \mid_{v=q}: \tH_q \to \tJ_\CC,$$ it is endowed with an $\tH_q$-module structure. We denote this $\tH_q$-module by $E_q$.

For an associative $\CC$-algebra $R$, we denote by $\K(R)$ the Grothendieck group of all finite dimensional representations of $R$ over $\CC$.

Thus the map $E \mapsto E_q$ induced a map $(\tilde \varphi_q)_*: \K(\tJ_\CC) \to \K(\tH_q)$. We have that
\begin{lem}\label{Psi}
The map $(\tilde \varphi_q)_*: \K(\tJ_\CC) \to \K(\tH_q)$ is surjective.
\end{lem}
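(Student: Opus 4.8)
The plan is to relate the specialization at $v=q$ to the generic fibre over $\CC(v)$, where $\tilde\varphi$ becomes an isomorphism, and to transport surjectivity across by means of reduction (decomposition) maps. First I would dispose of the factor $\Om$. Since $\Om$ is finite and $\tilde\varphi$ is built $\Om$-equivariantly --- as recorded above by $\tilde\varphi(\d(c_w))=\d\,\tilde\varphi(c_w)$ --- pullback along $\tilde\varphi_q$ intertwines the two induction functors, so that $\tilde\varphi_q^*(\mathrm{Ind}_{J_\CC}^{\tJ_\CC}N)\cong\mathrm{Ind}_{H_q}^{\tH_q}(\varphi_q^*N)$, where $\varphi_q=\varphi\mid_{v=q}\colon H_q\to J_\CC$. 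By Clifford theory for the crossed product by the finite group $\Om$, $\K(\tH_q)$ is spanned by the classes $[\mathrm{Ind}_{H_q}^{\tH_q}M]$ with $M$ a simple $H_q$-module. Hence it suffices to prove the untwisted statement, that $(\varphi_q)_*\colon\K(J_\CC)\to\K(H_q)$ is surjective, and then induce up.

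For the untwisted statement I would work over $\co=\CC[v,v\i]_{(v-q)}$, the localization of $\ca$ at $(v-q)$; this is a discrete valuation ring with fraction field $\CC(v)$ and residue field $\CC$, both of characteristic zero. Because $\varphi$ is defined over $\ca$, it is defined over $\co$, and reduction modulo $(v-q)$ produces a commutative square of Grothendieck groups: its top arrow is $(\varphi_{\CC(v)})_*\colon\K(J_{\CC(v)})\to\K(H_{\CC(v)})$, its bottom arrow is $(\varphi_q)_*$, and its two vertical arrows are the decomposition maps $d_J$ and $d_H$ attached to $\co$, so that $(\varphi_q)_*\circ d_J=d_H\circ(\varphi_{\CC(v)})_*$. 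By Lusztig's theorem the top arrow is an isomorphism (it is one over $\QQ(v)$, hence over $\CC(v)$). Chasing the square, $\mathrm{Im}\,(\varphi_q)_*\supseteq\mathrm{Im}\big((\varphi_q)_*\circ d_J\big)=\mathrm{Im}\big(d_H\circ(\varphi_{\CC(v)})_*\big)=\mathrm{Im}\,d_H$, the last equality because $(\varphi_{\CC(v)})_*$ is surjective. Thus it remains only to show that $d_H$ is surjective; note that no property of $d_J$ is needed.

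The surjectivity of $d_H$ is the main obstacle and carries the real content of the lemma. It asserts that every simple $H_q$-module is a constituent of the reduction at $v=q$ of some finite-dimensional $H_{\CC(v)}$-module, i.e. that simple $H_q$-modules lift to the generic fibre. The difficulty is genuinely representation-theoretic and is aggravated by $H$ being infinite-dimensional over $\ca$; I would control this using that the affine Weyl group $W$ has only finitely many two-sided cells, which lets one replace $H$ by the finite-dimensional quotients through which the relevant modules act and argue cell by cell, together with the symmetry of $H$ (the trace form pairing $T_w$ with $T_{w\i}$), for which decomposition maps out of a split generic fibre are known to be surjective. I expect this lifting step, rather than the formal diagram chase or the reduction over $\Om$, to be where all the work lies.
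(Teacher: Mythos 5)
The decisive gap is the one you flag yourself: your argument reduces the lemma to the surjectivity of the decomposition map $d_H$, and then leaves that step unproven. This is not a reduction to something known or easier --- it is where the entire content of the lemma sits, and it is not a citable fact for affine Hecke algebras. The Geck--Rouquier type theorems on surjectivity of decomposition maps apply to orders that are \emph{finitely generated} over a discrete valuation ring inside a \emph{split} algebra over its fraction field; here $H$ is infinite-dimensional over $\ca$, the two-sided cells of an affine Weyl group (and the corresponding quotients of $H$) are likewise infinite-dimensional, and splitness of $H_{\CC(v)}$ is not available a priori. Worse, $d_H$ is not even defined on all of $\K(H_{\CC(v)})$: a finite-dimensional $H_{\CC(v)}$-module need not admit a lattice over the localization of $\CC[v,v\i]$ at $(v-q)$ that is stable under $H$, because its central character can have a pole at $v=q$ (already in type $\tilde A_1$ a principal series module with such a parameter admits no lattice), so the chase $\mathrm{Im}\bigl(d_H\circ(\varphi_{\CC(v)})_*\bigr)=\mathrm{Im}\,d_H$ is carried out with a map whose domain is a proper, uncontrolled subgroup of $\K(H_{\CC(v)})$. (The one ingredient you invoke that is genuinely citable is that $\varphi$ becomes an isomorphism over the fraction field.) In short, you have exchanged the lemma for a statement at least as hard, and the missing idea is precisely how to manufacture, from a simple module of the specialized algebra, a module of $\tJ_\CC$ mapping onto it. That is exactly what the paper's proof supplies, following \cite[Lemma 1.9]{L3}: to a simple $\tH_q$-module $M$ one attaches $a_M=\max\{a(w);\, c_w M\neq 0\}$, uses the leading coefficients of the $c_w$-action to build a finite-dimensional $\tJ_\CC$-module $\tM$ together with a nonzero (hence surjective) $\tH_q$-map $p:\tM_q\to M$ with $a_{\ker p}<a_M$, and then inducts on $a_M$, which is bounded for an affine Weyl group; no decomposition maps or lattices are needed.

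Your preliminary reduction over $\Om$ is also incorrect as stated. Clifford theory does not give that $\K(\tH_q)$ is spanned by the induced classes $[\mathrm{Ind}_{H_q}^{\tH_q}M]$: when a simple $H_q$-module $M$ is $\Om$-stable, its distinct extensions to $\tH_q$ (the twists by characters of $\Om$) are pairwise non-isomorphic simples that all occur in $\mathrm{Ind}_{H_q}^{\tH_q}M$ with equal multiplicity, so no individual extension lies in the span of induced classes. This situation occurs here: the one-dimensional $H_q$-module $T_s\mapsto q$ is $\Om$-stable, and its twists by nontrivial characters of $\Om$ cannot be separated by induced classes. So even granting the untwisted statement, inducing up would not yield surjectivity of $(\tilde\varphi_q)_*$. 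The paper avoids this entirely by running the induction directly for $\tH$ and $\tJ$, using only that $\Om$ preserves the $a$-function.
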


\begin{proof}
The proof is similar to \cite[Lemma 1.9]{L3}. Since $\Om$ preserves $a$-function, we associate to each $\tH_q$-module $M$ an integer $a_M$ such that $c_w M=0$ whenever $a(w)>a_M$ and $c_{w'} M\neq 0$ for some $w'\in W$ with $a(w')=a_M$. For each simple $\tH_q$-module $M$, we construct as in the proof of \cite[Lemma 1.9]{L3} a finite dimensional $\tJ_{\CC}$-module $\tM$ and a nonzero $\tH_q$-morphism $p:\tM_q \to M$ with $M'=\ker p$ such that $a_{M'}<a_M$. Since the $a$-function is bounded, it follows easily that $(\tilde \varphi_q)_*$ is surjective.
\end{proof}

\subsection{}\label{Psi2} Let $\K^*(\tH_q)=\Hom_\ZZ(\K(\tH_q), \CC)$ be the space of linear functions on $\K(\tH_q)$. The map $\tH_q \to \K^*(\tH_q)$ sending $h \in \tH_q$ to the function $M \mapsto Tr_M(h)$ on $\K(\tH_q)$ induces a map $$\Psi_q: \tH_q/[\tH_q, \tH_q] \to \K^*(\tH_q).$$

It is proved in \cite[Theorem B]{K} and \cite[Main Theorem]{F} that $\Psi_q$ is injective if $q$ is a power of prime. We'll show below that $\Psi_1$ is also injective. Here $\tH_1=\CC[\tW]$ is the group algebra.

\begin{lem}
Let $\tw, \tw' \in \tW$. If $\tw$ and $\tw'$ are not in the same conjugacy class of $\tW$, then there exists $n>0$ such that the image of $\tw$ and $\tw'$ in $\tW/n P$ are not in the same conjugacy class.
\end{lem}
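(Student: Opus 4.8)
The plan is to use the semidirect product decomposition $\tW=P\rtimes\tW_0$ from $\S$\ref{wgo} and reduce the statement to a finiteness property of the lattice $P$. First I would observe that each $nP$ is a normal subgroup of $\tW$: it is preserved by the linear action of $\tW_0$ on $P$ and is central in $P$, so $\tW/nP=(P/nP)\rtimes\tW_0$ is a well-defined finite group, and the projections $\tW\to\tW/nP\to\tW_0$ carry conjugate elements to conjugate elements. Writing $\tw=t^\chi w$ and $\tw'=t^{\chi'}w'$ with $\chi,\chi'\in P$ and $w,w'\in\tW_0$, I would first dispose of the case where $w$ and $w'$ are not conjugate in $\tW_0$: then their images in $\tW/nP$ already have non-conjugate finite parts for every $n$, and we are done. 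Otherwise, after replacing $\tw'$ by a $\tW$-conjugate by a lift in $\tW_0$ (which alters neither its $\tW$-conjugacy class nor its $\tW/nP$-conjugacy class), I may assume $w'=w$.

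Next I would record the conjugacy criterion. A direct computation with $\tu=t^\l u$ gives $\tu\,\tw\,\tu\i=t^{\l+u\chi-(uwu\i)\l}(uwu\i)$, so the elements of the $\tW$-conjugacy class of $\tw$ having finite part $w$ are exactly the $t^{u\chi+(1-w)\l}w$ with $u\in Z_{\tW_0}(w)$ and $\l\in P$. Hence, setting $M=(1-w)P\subset P$, the elements $\tw$ and $\tw'$ are $\tW$-conjugate if and only if $\chi'-u\chi\in M$ for some $u\in Z_{\tW_0}(w)$. The identical computation performed modulo $nP$ shows that their images are conjugate in $\tW/nP$ if and only if $\chi'-u\chi\in M+nP$ for some $u\in Z_{\tW_0}(w)$. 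Thus the assumption that $\tw,\tw'$ are not $\tW$-conjugate means precisely that the finitely many elements $g_u:=\chi'-u\chi$ (as $u$ ranges over the finite set $Z_{\tW_0}(w)$) all lie outside $M$, and it suffices to produce a single $n$ with $g_u\notin M+nP$ for every such $u$.

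Finally I would prove the lattice identity $\bigcap_{n\ge 1}(M+nP)=M$. Passing to the finitely generated abelian group $G=P/M$, this amounts to $\bigcap_{n\ge1}nG=0$, which holds because $\bigcap_n n\ZZ^a=0$ kills the free part of $G$ while any $n$ divisible by the exponent of the torsion part annihilates it. Consequently, for each $u$ there is some $n_u$ with $g_u\notin M+n_u P$; taking $n$ to be a common multiple of the $n_u$ and using the monotonicity $M+nP\subseteq M+n_uP$ (valid since $n_u\mid n$) yields $g_u\notin M+nP$ for all $u$ at once, which is exactly the desired non-conjugacy in $\tW/nP$. The main obstacle is this last step: one must defeat all the residues $g_u$ with a single modulus, and the torsion of $P/M$ (the sublattice $M=(1-w)P$ need not be saturated in $P$) is precisely what makes the naive choice of a large prime $n$ fail, since multiplication by a prime coprime to the torsion is invertible there. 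The intersection identity $\bigcap_n(M+nP)=M$ combined with the divisibility monotonicity of the subgroups $M+nP$ is the clean way past this difficulty.
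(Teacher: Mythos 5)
Your proof is correct and is essentially the paper's own argument: both reduce conjugacy to finitely many translation-coset conditions (you index them by $Z_{\tW_0}(w)$ after matching the finite parts, the paper by coset representatives of the translation lattice inside $\tW$, setting $\tw_i=x_i\tw x_i\i$), both rest on the same key fact that an element of a finitely generated abelian group such as $P/(1-w)P$ that is divisible by every $n$ must vanish, and both produce a single modulus by taking a product or common multiple of the finitely many $n_u$. The only cosmetic difference is that the paper carries out the computation with the coroot lattice $Q$ rather than $P$ (the paper itself is inconsistent on this point between the statement and its proof), which does not affect the substance of the argument.
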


\begin{proof}
It suffices to consider the case where $\tW \subset W \rtimes \Aut(W, S)$. Then we have that $\tW/W$ is finite. Notice that $Q$ is a normal subgroup of $\tW$ and $\tW/m Q$ is a quotient group of $\tW$. Since $\tW/W$ and $W/Q$ are both finite groups, $\tW/Q$ is also a finite group. We choose a representative $x_i$ for each coset of $Q$. Set $\tw_i=x_i \tw x_i \i$. Then any element conjugate to $\tw$ is of the form $t^\l \tw_i t^{-\l}$ for some $i$ and $\l \in Q$.

Now we show that

(a) for any $i$, there exists $n_i>0$ such that the image of $\tw_i$ and $\tw'$ are not conjugate by an element in $n_i Q$.

Otherwise, there exists $i$ such that $\tw'=t^\chi \tw_i$ for some $\chi \in Q$ such that $\chi \in (1-\tw_i) Q+n Q$ for all $n>0$. Therefore the image of $\chi$ in $Q/(1-\tw_i)Q$ is divisible by all the positive integer $n$. So the image of $\chi$ in $Q/(1-\tw_i)Q$ is $0$ and $\chi=\l-\tw_i \l$ for some $\l \in Q$. Hence $\tw'=t^{\l} \tw_i t^{-\l}$ is conjugate to $\tw$ in $\tW$. That is a contradiction.

(a) is proved.

Now set $n=\Pi_i n_i$. If the image of $\tw$ and $\tw'$ in $\tW/n Q$ are in the same conjugacy class, then there exists $i$ such that the image of $\tw_i$ and $\tw'$ in $\tW/n Q$ are conjugate by an element in $Q$. Hence their image in $\tW/n Q \to \tW/  n_i Q$ are conjugate by an element in $Q$. That contradicts (a).
\end{proof}

\begin{prop}\label{kl}
Let $\tw_1,\cdots,\tw_r\in\tW$ be elements in distinct conjugacy classes of $\tW$. The the elements $\Psi_1(\tw_1),\cdots,\Psi_1(\tw_r)$ are linearly independent functions on $\K(\CC[\tW])$.
\end{prop}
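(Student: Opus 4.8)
The plan is to reduce the linear independence over the infinite group $\tW$ to an ordinary character-theoretic statement for a suitable finite quotient, using the preceding lemma as the bridge. Suppose we are given a relation $\sum_{i=1}^r c_i \Psi_1(\tw_i)=0$ in $\K^*(\CC[\tW])$ with $c_i \in \CC$. Unwinding the definition in $\S\ref{Psi2}$ (recall $\tH_1=\CC[\tW]$, so that $\Psi_1(\tw_i)$ is the functional $M \mapsto \tr_M(\tw_i)$), this means $\sum_{i=1}^r c_i \tr_M(\tw_i)=0$ for every finite dimensional representation $M$ of $\tW$. The goal is to deduce $c_i=0$ for all $i$.

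First I would produce a single finite quotient that simultaneously separates all the given classes. For each pair $i\neq j$ the previous lemma supplies an integer $n_{ij}>0$ such that the images of $\tw_i$ and $\tw_j$ in $\tW/n_{ij}Q$ lie in distinct conjugacy classes. Setting $n=\prod_{i<j} n_{ij}$ and $G=\tW/nQ$, the projection $\tW/nQ\to\tW/n_{ij}Q$ sends conjugate elements to conjugate elements, so non-conjugacy of the images downstairs forces non-conjugacy in $G$; hence $\bar\tw_1,\dots,\bar\tw_r$ lie in pairwise distinct conjugacy classes of $G$. As established in the proof of that lemma, $\tW/Q$ is finite, and therefore $G=\tW/nQ$ is a finite group.

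Next I would restrict the given relation to those representations of $\tW$ that factor through $G$. Any representation $M$ of the finite group $G$ inflates to a representation of $\tW$ along the projection, and for such $M$ one has $\tr_M(\tw_i)=\tr_M(\bar\tw_i)=\chi_M(\bar\tw_i)$. Thus $\sum_{i=1}^r c_i \chi_M(\bar\tw_i)=0$ for every representation $M$ of $G$, equivalently for every irreducible character of $G$. Since $\bar\tw_1,\dots,\bar\tw_r$ represent distinct conjugacy classes of the finite group $G$, the corresponding columns of the character table of $G$ are linearly independent (the full square character table being invertible). Hence $c_i=0$ for all $i$, proving that $\Psi_1(\tw_1),\dots,\Psi_1(\tw_r)$ are linearly independent.

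The only genuine obstacle is the separation step, and it is precisely what the previous lemma is designed to overcome: because $\tW$ is infinite there is no a priori reason that its finite dimensional representations separate conjugacy classes, so one must know that distinct classes persist in some finite quotient $\tW/nQ$. Once this residual finiteness of conjugacy classes is available, the rest is routine, combining the invertibility of the character table of a finite group with the compatibility of traces under inflation along the surjection $\tW \twoheadrightarrow G$.
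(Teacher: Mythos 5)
Your proof is correct and follows essentially the same route as the paper: use the preceding lemma to build one finite quotient $\tW/nQ$ (with $n$ the product of the pairwise $n_{ij}$) in which the classes of $\tw_1,\dots,\tw_r$ remain distinct, then conclude by character theory of finite groups. The only cosmetic difference is that you phrase the transfer concretely via inflation of representations and invertibility of the character table, whereas the paper packages the same step as a commutative diagram of maps $\K(\CC[F]) \to \K(\CC[\tW])$ and $\K^*(\CC[\tW]) \to \K^*(\CC[F])$.
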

\begin{proof}
Again, it suffices to consider the case where $\tW/Q$ is finite.

For any $1 \le i<j \le r$, there exists $n_{i j}>0$ such that the image of $\tw_i$ and $\tw_j$ in $\tW/n_{i j} Q$ are not in the same conjugacy class.

Set $n=\Pi_{1 \le i<j \le r} n_{i j}$ and $F=\tW/n P$. Then $F$ is a finite group. Let $\underline \tw_i$ be the image of $\tw_i$ in $F$. If $\underline \tw_i$ and $\underline \tw_j$ are in the same conjugacy class of $F$, then their image in $\tW/n_{i j} P$ under the map $F \to \tW/n_{i j} P$ are still in the same conjugacy class. That is a contradiction. Hence $\underline \tw_1, \cdots, \underline \tw_r$ are in distinct conjugacy classes of $F$.

The surjection $\tW \to F$ induces an injection $\K(\CC[F]) \to \K(\CC[\tW])$ and a surjection $\K^*(\CC[\tW]) \to \K^*(\CC[F])$. We have the following commutative diagram
\[\xymatrix{\tW \ar[r]^-{\Psi_1} \ar[d] & \K^*(\CC[\tW]) \ar[d] \\ F \ar[r]^-{\Psi} & \K^*(\CC[F])}\]
Here $\Psi: F \to \K^*(\CC[F])$ is defined in the same way as $\Psi_q$ in $\S$\ref{Psi2}.

Since $F$ is a finite group and $\underline \tw_1, \cdots, \underline \tw_r$ are in distinct conjugacy classes of $F$, $\Psi(\underline\tw_1), \cdots, \Psi(\underline \tw_r)$ are linearly independent functions on $\K(\CC[F])$. Hence $\Psi_1(\tw_1) \cdots, \Psi_1(\tw_r)$ are linearly independent functions on $\K(\CC[\tW])$.
\end{proof}

%\

%For any conjugacy class $\co$ of $\tW$, we choose a representative $\tw_\co$. Then $(\tw_\co)_{\co}$ is a basis of $\CC[\tW]/[\CC[\tW], \CC[\tW]]$. Thus we have the following consequence

\begin{cor}
The map $\Psi_1: \CC[\tW]/[\CC[\tW], \CC[\tW]] \to \K^*(\CC[\tW])$ is injective.
\end{cor}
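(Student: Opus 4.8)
The plan is to read off injectivity directly from Proposition \ref{kl}, after recalling the standard description of the cocenter of a group algebra. First I would note that, since $\tH_1 = \CC[\tW]$, the commutator subspace $[\CC[\tW], \CC[\tW]]$ is spanned by the elements $x \tw x\i - \tw$ for $\tw, x \in \tW$. Consequently the image of a group element $\tw$ in the cocenter $\CC[\tW]/[\CC[\tW], \CC[\tW]]$ depends only on the conjugacy class of $\tw$, and, choosing one representative $\tw_\co$ for each conjugacy class $\co$ of $\tW$, the images of the $\tw_\co$ form a $\CC$-basis of the cocenter. This is the classical fact that the zeroth Hochschild homology of a group algebra is free on the set of conjugacy classes.

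Next I would take an arbitrary element $\xi$ of the cocenter with $\Psi_1(\xi) = 0$ and expand it in this basis as a finite sum $\xi = \sum_{i=1}^r a_i \, \overline{\tw_i}$, where the $\tw_i$ lie in pairwise distinct conjugacy classes and each $a_i \in \CC$. Since $\Psi_1$ is by construction induced by the trace pairing $h \mapsto (M \mapsto Tr_M(h))$, which annihilates commutators, applying $\Psi_1$ gives the identity $\sum_{i=1}^r a_i \, \Psi_1(\tw_i) = 0$ in $\K^*(\CC[\tW])$, where $\Psi_1(\tw_i)$ denotes the trace function attached to $\tw_i$ exactly as in Proposition \ref{kl}.

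Finally, Proposition \ref{kl} asserts precisely that the functions $\Psi_1(\tw_1), \dots, \Psi_1(\tw_r)$ are linearly independent on $\K(\CC[\tW])$, because the $\tw_i$ represent distinct conjugacy classes. Hence every $a_i$ vanishes and $\xi = 0$, so $\Psi_1$ is injective. There is essentially no obstacle here: the entire content of the corollary is carried by Proposition \ref{kl}, and the only additional point to verify is the identification of the basis of the cocenter with the conjugacy classes of $\tW$, which is routine.
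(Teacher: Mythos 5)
Your proof is correct and matches the paper's intent exactly: the corollary is stated there as an immediate consequence of Proposition \ref{kl}, with the identification of the cocenter basis of $\CC[\tW]$ with conjugacy classes left implicit, and your argument simply makes that routine step explicit before invoking the linear independence from Proposition \ref{kl}.
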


\

Now we prove the main result of this section.

\begin{thm}\label{indep}
Let $\tw_1,\cdots,\tw_r\in\tW$ be elements in distinct conjugacy classes of $\tW$. Then the image of $T_{\tw_1},\cdots,T_{\tw_r}$ in $\tH_\CC/[\tH_\CC, \tH_\CC]$ are linearly independent.
\end{thm}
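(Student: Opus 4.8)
The plan is to prove the statement by showing that the trace functions attached to $\tw_1,\dots,\tw_r$ are linearly independent over the field $\CC(v)$, and that this independence can be detected at the single specialization $v=1$, where it reduces to the finite-group statement of Proposition \ref{kl}. Suppose $\sum_{i=1}^r a_i T_{\tw_i}\in[\tH_\CC,\tH_\CC]$ with $a_i\in\CC[v,v\i]$; I want to deduce $a_i=0$ for all $i$. For each finite-dimensional $\tJ_\CC$-module $E$ let $E_v=E\otimes_\CC\CC[v,v\i]$, regarded as an $\tH_\CC$-module through $\tilde\varphi$, and set $\theta_i(E)=\tr_{E_v}(T_{\tw_i})$. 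Since $\tilde\varphi$ maps into $\tJ_\ca$, each $\theta_i(E)$ is a genuine Laurent polynomial in $v$; and since any trace kills commutators, the hypothesis yields
\[
\sum_{i=1}^r a_i(v)\,\theta_i(E)=0\qquad\text{for every }E.
\]
Thus it suffices to show that the functions $\theta_1,\dots,\theta_r\colon \K(\tJ_\CC)\to\CC[v,v\i]$ are linearly independent over $\CC(v)$.

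Linear independence over $\CC(v)$ of these $r$ functions is equivalent to the existence of $\tJ_\CC$-modules $E_1,\dots,E_r$ for which the matrix $\big(\theta_i(E_j)\big)_{1\le i,j\le r}$ has nonzero determinant in $\CC[v,v\i]$. I would produce such modules by evaluating at $v=1$. There $\tH_1=\CC[\tW]$, the specialization $T_{\tw}\mapsto\tw$ identifies $\theta_i(E)|_{v=1}$ with $\Psi_1(\tw_i)\big((E)_1\big)$, where $(E)_1$ denotes $E$ viewed as a $\CC[\tW]$-module via $\tilde\varphi_1=\tilde\varphi|_{v=1}$. By Lemma \ref{Psi} applied with $q=1$, the classes $(E)_1$ span $\K(\CC[\tW])$ as $E$ runs over $\tJ_\CC$-modules, while by Proposition \ref{kl} the functions $\Psi_1(\tw_1),\dots,\Psi_1(\tw_r)$ are linearly independent on $\K(\CC[\tW])$. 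Hence the evaluation map $M\mapsto(\Psi_1(\tw_i)(M))_{i=1}^r$ is surjective onto $\CC^r$, and since its domain is spanned by the $(E)_1$, I may choose $E_1,\dots,E_r$ so that $\big(\Psi_1(\tw_i)((E_j)_1)\big)_{i,j}$ is an invertible $r\times r$ matrix over $\CC$.

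For this choice the determinant $\det\big(\theta_i(E_j)\big)_{i,j}\in\CC[v,v\i]$ is nonzero at $v=1$, hence is a nonzero Laurent polynomial; therefore the $\theta_i$ are linearly independent over $\CC(v)$, and the displayed relation forces $a_1=\dots=a_r=0$, proving the theorem. The device that makes this work is the asymptotic ring $\tJ$: it supplies a single family of modules $E$ whose $\tH_q$-traces vary as honest Laurent polynomials in $v$ and specialize correctly at $v=1$, enabling the elementary deformation step ``a Laurent polynomial nonvanishing at one point is nonzero.'' I expect the main point requiring care to be exactly this compatibility at $v=1$---that $\tilde\varphi_1$ realizes $E$ as the group-algebra module $(E)_1$ and that $T_{\tw}$ specializes to the group element $\tw$---together with the availability of Lemma \ref{Psi} at $q=1$, which is what lets the group-algebra traces $\Psi_1(\tw_i)$ be sourced from $\tJ_\CC$-modules and thereby bridges the generic statement to the finite-group computation.
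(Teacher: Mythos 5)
Your proposal is correct and follows essentially the same route as the paper: both realize the traces of the $T_{\tw_i}$ as Laurent polynomials by letting $\tH_\CC$ act through $\tilde\varphi$ on $\tJ_\CC$-modules base-changed to $\CC[v,v\i]$, then specialize at $v=1$ and invoke Lemma \ref{Psi} (at $q=1$) together with Proposition \ref{kl}. The only difference is the endgame: the paper factors the coefficients as $c_i=cc_i'$ so that some $c_i'$ survives at $v=1$ and derives a contradiction after specializing the whole relation, whereas you encode the same information in a nonvanishing determinant $\det\bigl(\theta_i(E_j)\bigr)$ detected at $v=1$, which neatly sidesteps that common-divisor step; both deductions are valid.
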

\begin{proof}
Set $\ca_\CC=\CC[v, v\i]$. Assume that $\sum_{i=1}^rc_iT_{\tw_i}\in[\tH_\CC,\tH_\CC]$ with all $c_i\in\ca_\CC$. Suppose that not all $c_i$'s are $0$. Then there exists $c \in \ca_\CC$ and $c_i'\in\ca_\CC$ for $1 \le i \le r$ such that $c_i=cc_i'$ and there exists $1 \le s \le r$ with $c'_s \mid_{v=1} \neq 0$.

Let $E$ be a $\tJ$-module over $\CC$. Set $E_\CC=E\otimes_\CC \ca_\CC$. Then $E_\CC$ is a $\tJ_{\ca_\CC}$-module over $\ca_\CC$. Via the homomorphism $\tilde \varphi: \tH \to \tJ_\ca$, $E_\CC$ admits an $\tH_{\CC}$-module structure over $\ca_\CC$. We denote it by $E_\varphi$. For $h\in\tH$, let $\tr_E(h)\in\ca_{\CC}$ be the trace of the endomorphism $h$ on $E_\varphi$ over $\ca_{\CC}$. Then $\tr_E(\sum_{i=1}^rc_iT_{\tw_i})=c(\sum_{i=1}^rc_i'\tr_E(T_{\tw_i}))=0$. Note that $\ca_{\CC}$ is an integral domain and $c\neq0$, we have that $\sum_{i=1}^r c_i'\tr_E(T_{\tw_i})=0$ for all $\tJ$-modules $E$.
Set $v=1$, then \[\tag{a} \sum_{i=1}^r c_i' \mid_{v=1} \tr_E(T_{\tw_i})=0\] for all $\tJ$-module $E$. Hence by Lemma \ref{Psi}, (a) holds for all $\tW$-modules. Now by Proposition \ref{kl},, $c'_i \mid_{v=1}=0$ for all $i$. That is a contradiction.
\end{proof}

\begin{cor}
The class polynomial $f_{\tw, \co}$ is uniquely determined by the identity (a) in Theorem \ref{class}.
\end{cor}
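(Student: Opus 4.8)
The plan is to read off the uniqueness directly from the linear independence established in Theorem \ref{indep}. Suppose $(f_{\tw,\co})_\co$ and $(f'_{\tw,\co})_\co$ are two families of polynomials in $\ca$, each nonzero for only finitely many $\co$, both satisfying the congruence (a) of Theorem \ref{class}. Subtracting the two congruences, I obtain
$$\sum_\co (f_{\tw,\co}-f'_{\tw,\co})\, T_{\tw_\co} \equiv 0 \mod [\tH, \tH],$$
which is a finite sum, since the union of the two (finite) supports is finite and the classes on which both families vanish contribute nothing. Here the representatives $\tw_\co$ range over pairwise distinct conjugacy classes, one for each $\co$.

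Next I would pass to the scalar extension $\tH_\CC=\tH\otimes_{\ca}\CC[v,v\i]$. The inclusion $\ca=\ZZ[v,v\i]\lto\ca_\CC=\CC[v,v\i]$ carries the relation above into $\tH_\CC$, so that $\sum_\co (f_{\tw,\co}-f'_{\tw,\co})\, T_{\tw_\co}$ lies in $[\tH_\CC,\tH_\CC]$, its coefficients now regarded as elements of $\ca_\CC$. Because the $\tw_\co$ lie in distinct conjugacy classes, Theorem \ref{indep} asserts that the images of the $T_{\tw_\co}$ are linearly independent in $\tH_\CC/[\tH_\CC,\tH_\CC]$. Hence each coefficient $f_{\tw,\co}-f'_{\tw,\co}$ vanishes in $\ca_\CC$, and therefore already in the subring $\ca$, giving $f_{\tw,\co}=f'_{\tw,\co}$ for all $\co$.

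There is essentially no obstacle here, as the entire force of the statement is carried by Theorem \ref{indep}. The only points demanding (minor) care are the base change to $\CC$-coefficients — observing that the integral relation remains valid after tensoring with $\ca_\CC$, and that vanishing of the coefficients over $\ca_\CC$ forces their vanishing over $\ca$ — together with the finiteness of the supports, which guarantees that only finitely many representatives $\tw_\co$ occur and so Theorem \ref{indep}, phrased for finitely many elements, applies verbatim.
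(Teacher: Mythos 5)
Your proof is correct and is exactly the argument the paper intends: the corollary is stated as an immediate consequence of Theorem \ref{indep}, and you have simply spelled out the deduction (subtract the two congruences, invoke linear independence of the images of the $T_{\tw_\co}$ in $\tH_\CC/[\tH_\CC,\tH_\CC]$, and descend from $\CC[v,v\i]$ to $\ZZ[v,v\i]$). The added care about base change and finiteness of supports is sound and fills in the details the paper leaves implicit.
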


\

Now combining Theorem \ref{class} and Theorem \ref{indep}, we have that

\begin{thm}
For any conjugacy class $\co$ of $\tW$, we choose a minimal length representative $\tw_\co$. Then $(T_{\tw_\co})_{\co}$ is a $\ca$-basis of $\tH/[\tH, \tH]$.
\end{thm}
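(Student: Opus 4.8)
The plan is to verify the two defining properties of a basis, spanning and $\ca$-linear independence, by combining the two principal results already established: the existence of class polynomials (Theorem \ref{class}) and the independence statement over $\CC$ (Theorem \ref{indep}). Since $\tW$ has infinitely many conjugacy classes, ``basis'' is understood with finite support, so both properties will be checked on finite combinations.

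First I would dispose of spanning. By Theorem \ref{class}, for every $\tw \in \tW$ one has $T_{\tw} \equiv \sum_\co f_{\tw,\co} T_{\tw_\co} \pmod{[\tH,\tH]}$, with $f_{\tw,\co} \in \ZZ[v-v\i] \subset \ca$ and only finitely many nonzero. Since $(T_{\tw})_{\tw \in \tW}$ is an $\ca$-basis of $\tH$, the images of the $T_{\tw}$ already generate the quotient $\tH/[\tH,\tH]$ as an $\ca$-module, and the displayed congruence rewrites each such image as a finite $\ca$-combination of the $T_{\tw_\co}$. Hence the family $(T_{\tw_\co})_\co$ spans the cocenter.

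Next comes linear independence, where I would start from a hypothetical finite relation $\sum_\co c_\co T_{\tw_\co} \in [\tH,\tH]$ with $c_\co \in \ca$, all but finitely many zero; the nonzero terms involve finitely many pairwise distinct classes with representatives $\tw_{\co_1},\dots,\tw_{\co_r}$ lying in distinct conjugacy classes of $\tW$. The point is to transport this relation into the complex Hecke algebra $\tH_\CC = \tH \otimes_\ca \CC[v,v\i]$, where Theorem \ref{indep} is available. Applying the coefficient map $\ca \to \CC[v,v\i]$, the relation maps to $\sum_i c_{\co_i} T_{\tw_{\co_i}} \in [\tH_\CC,\tH_\CC]$, because every commutator in $\tH$ maps to a commutator in $\tH_\CC$ and so $[\tH,\tH]$ lands in $[\tH_\CC,\tH_\CC]$. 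As the $\tw_{\co_i}$ represent distinct conjugacy classes, Theorem \ref{indep} forces $c_{\co_i}=0$ in $\CC[v,v\i]$, whence $c_{\co_i}=0$ in $\ca$ via the inclusion $\ca \hookrightarrow \CC[v,v\i]$. Together with spanning this shows $(T_{\tw_\co})_\co$ is an $\ca$-basis of $\tH/[\tH,\tH]$.

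I do not expect a genuine obstacle at this final step: the substantive work lies upstream, with spanning resting on the affine Geck--Pfeiffer reduction (Theorem \ref{main}) through the construction of the class polynomials, and independence resting on Theorem \ref{indep}, whose proof routes through Lusztig's $J$-ring homomorphism $\tilde\varphi$, the surjectivity of the induced map $(\tilde\varphi_q)_*$ on Grothendieck groups, and the separation of conjugacy classes in the finite quotients $\tW/nP$ (Proposition \ref{kl}). The only minor subtlety here is the passage from $\CC[v,v\i]$-coefficients back to $\ca=\ZZ[v,v\i]$-coefficients, which is harmless once one records the inclusion of coefficient rings and the functoriality of the commutator submodule under base change.
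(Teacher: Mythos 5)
Your proposal is correct and takes essentially the same route as the paper, whose entire proof of this theorem is the single line ``combining Theorem \ref{class} and Theorem \ref{indep}'': spanning from the class polynomial expansion, independence from Theorem \ref{indep}. The details you make explicit --- that $[\tH,\tH]$ maps into $[\tH_\CC,\tH_\CC]$ under base change and that $\ca=\ZZ[v,v\i]\hookrightarrow\CC[v,v\i]$ is injective --- are exactly the routine verifications the paper leaves implicit.
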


\section*{Acknowledgement} We thank George Lusztig, Weiqiang Wang, Nanhua Xi and Xinwen Zhu for helpful discussions on affine Hecke algebras.

\end{document}